\newcommand\redsout{\bgroup\markoverwith{\textcolor{red}{\rule[0.5ex]{2pt}{0.4pt}}}\ULon}
\newcommand{\E}{\mathbb{E}}
\newcommand{\N}{\mathbb{N}}
\newcommand{\R}{\mathbb{R}}
\newcommand{\Pb}{\mathbb{P}}
\newcommand{\tp}{t^{\prime}}
\newcommand{\pa}{a^{\prime}}
\newcommand{\kp}{k^{\prime}}
\newcommand{\ps}{s^{\prime}}
\newcommand{\ve}{\varepsilon}
\newcommand{\ovu}{\overline{u}}
\newcommand{\und}{\underline{u}}
\newcommand{\wdu}{\widehat{u}}
\newcommand{\ze}{\zeta}
\newcommand{\vep}{\varepsilon^{\prime}}
\newcommand{\diffns}{\mathrm{d}}
\def\={{\;\mathop{=}\limits^{\text{(law)}}\;}}
\newtheorem{theorem}{Theorem}[section]
\newtheorem{prop}[theorem]{Proposition}
\newtheorem{lemma}[theorem]{Lemma}
\newtheorem{defi}[theorem]{Definition}
\newtheorem{corollary}[theorem]{Corollary}
\newtheorem{hyp}[theorem]{Hypothesis} 
\newtheorem{remark}[theorem]{Remark}
\numberwithin{equation}{section}
\title[Malliavin differentiability of solutions of hyperbolic SPDEs with irregular drifts]%\texttt{Regularization by noise for SDEs in the plane}]
{Malliavin differentiability of solutions of hyperbolic stochastic partial differential equations with irregular drifts }%{Regularisation by noise for Stochastic Differential Equations in the plane with Nondecreasing drifts}
\author[A.-M. Bogso]{Antoine-Marie Bogso}
\address{University of Yaounde I\\
	Faculty of Sciences, Department of Mathematics\\
	P.O. Box 812, Yaounde, Cameroon \\
	 and African institute for Mathematical Sciences Ghana, P.O. Box LGDTD 20046, Summerhill Estates, East Legon Hills, Santoe, Acrra}
\email{antoine.bogso@facsciences-uy1.cm, antoine@aims.edu.gh}           %  \\
\author[O. Menoukeu Pamen]{Olivier Menoukeu Pamen}
\address{Institute for Financial and Actuarial Mathematics (IFAM) \\
	Department of Mathematical Sciences, University of Liverpool \\
	Liverpool L69 7ZL, UK \\
	and AIMS Ghana}
\email{menoukeu@liverpool.ac.uk}
\thanks{The project on which this publication is based has been carried out with funding provided by the Alexander von Humboldt Foundation, under the programme financed by the German Federal Ministry of Education and Research entitled German Research Chair No 01DG15010.}
\subjclass{Primary 60H07, 	60H50, 	60H17; Secondary 	60H15 }
\keywords{Brownian sheet, SDEs on the plane, path by path uniqueness, Malliavin derivative}
\date{\today}
\begin{document}

 \begin{abstract}
 	We prove path-by-path uniqueness of solution to hyperbolic stochastic partial differential equations when the drift coefficient is the difference of two componentwise monotone Borel measurable functions of spatial linear growth. The Yamada-Watanabe principle for SDE driven by Brownian sheet then allows to derive strong uniqueness for such equation and thus extending the results in [Bogso, Dieye and Menoukeu Pamen, Elect. J. Probab.,  27:1-26, 2022] and [Nualart and Tindel, Potential Anal., 7(3):661--680, 1997]. Assuming that the drift is globally bounded, we show that the unique strong solution is Malliavin differentiable. The case of spatial linear growth drift coefficient is also studied.
 \end{abstract}

\maketitle 

\section{Introduction}
\label{intro}

The existence, uniqueness and Malliavin differentiability of strong solutions of SDEs on the plane with smooth coefficients have been obtained in several settings of varying generality. However there are not many results when the coefficient of the such equation are singular. The purpose of the present paper is two-fold: first we obtain the existence and uniqueness of strong solution of the following integral form equation
\begin{align}\label{Eqmainhpde4}
	X_{s,t}=\xi+\int_0^t\int_0^sb(s_1,t_1,X_{s_1,t_1})\mathrm{d}s_1\mathrm{d}t_1+W_{s,t}, 
	\text{ for }(s,t)\in\Gamma,
\end{align}
when the drift $b$ is the difference of two componentwise monotone functions and of spatial linear growth. We address this problem by using the Yamada-Watanabe argument for SDEs driven by Brownian sheet  derived in \cite{NuYe89} (see also \cite{Ye87}, \cite[Remark 2]{Tu83}), that is, we combine weak existence and pathwise uniqueness to obtain the existence of a unique strong solution. More particularly, we replace the pathwise uniqueness by a stronger notion of uniqueness, namely, the path-by-path uniqueness introduced in \cite{Da07} (see also \cite{Fla10}) in the ecase of SDEs driven by one-parameter Brownian motion. This notion was introduced in \cite{BDM21b} for the two parameter process, as follows:

\begin{defi}\label{defipathbpath}
	We say that the path-by-path uniqueness of solutions to \eqref{Eqmainhpde4} holds when there exists a full $\Pb$-measure set $\Omega_0\subset\Omega$ such that for all $\omega\in\Omega_0$ the following statement is true: there exists at most one function  $y\in\mathcal{V}$ which satisfies $$\int_0^T\int_0^T|b(s,t,y_{s,t})|\mathrm{d}s \mathrm{d}t<\infty,\text{ }\partial y=x,\text{ for some }x\in\partial \mathcal{V}\text{ and }T>0$$ and
	\begin{align}\label{eqmainpathbypath}
		y_{s,t}=x+\int_0^s\int_0^tb(s_1,t_1,y_{s_1,t_1})\mathrm{d}s_1 \mathrm{d}t_1+W_{s,t}(\omega),\text{ }\forall\,(s,t)\in[0,T]^2.
	\end{align}
\end{defi}

The study of path-by-path uniqueness is motivated by the problem of regularisation by noise of random ordinary (or partial) differential equations (ODEs or PDEs).  In the case of of SDEs driven by Brownian motion, path-by-path uniqueness of equation \eqref{Eqmainhpde4} was proved in \cite{Da07} assuming that the drift is bounded and measurable, and the diffusion is constant. This result was extended to the non-constant diffusion in \cite{Da11} using rough path analysis. There has now been several generalisation of this result.  The authors in \cite{BFGM14} proved a Sobolev regularity of solutions to the linear stochastic transport and continuity equations with drift in critical $L^p$ spaces. Such a result does not hold for the corresponding deterministic equations.  In \cite{BM16}, the authors analysed the regularisation by noise  for a non-Lipschitz stochastic heat equation and proved path-by-path uniqueness for any initial condition in a certain class of a set of probability one. In \cite{AMP20}, the path-by-path uniqueness for transport equations driven by \textcolor{black}{the fractional Brownian motion of Hurst index $H<1/2$ with bounded and integrable vector-fields} is investigated. In \cite{CG16,GG21} the authors solved the regularisation by noise problem from the point of view of additive perturbations. In particular, the work \cite{CG16} considered generic perturbations without any specific probabilistic setting whereas authors in \cite{ABP17a} construct a new Gaussian noise of fractional nature and proved that it has a strong regularising effect on a large class of ODEs. More recently, the regularisation by noise problem for ODEs with vector fields given by Schwartz distributions was studied in \cite{HP21}. It was also proved that  if one perturbs such an equation by adding an infinitely regularising path, then it has a unique solution. Let us also mention the recent work \cite{KP20a} in which the authors looked at multidimensional SDEs with distributional drift driven by symmetric $\alpha$-stable L\'evy processes for $\alpha\in(1,2]$. In all of the above mentioned works, the driving noise considered are one parameter processes.

Our method to prove path-by-path uniqueness follows as in \cite{BDM21b}. We show the path-by-path uniqueness on $\Gamma_0=[0,1]^2$. More precisely, we consider the integral equation  
\begin{align}\label{Eqmainhpde5}
	X_{s,t}=\xi+\int_0^t\int_0^sb(s_1,t_1,X_{s_1,t_1})\mathrm{d}s_1\mathrm{d}t_1+W_{s,t}
	\text{ for }(s,t)\in \Gamma_0,
\end{align}
where the drift is of spatial linear growth. It is shown in \cite[Section 1]{BDM21b} (see also \cite[Section 1]{Da07}) that  path-by-path uniqueness of solutions to \eqref{Eqmainhpde5} holds if and only if, with probability one, there is no nontrivial solution $u\in \mathcal{V}^1_0$ of 
\begin{align}\label{Eqmainhpde6}
	u(s,t)=\int_0^s\int_0^t\{b(s_1,t_1,W_{
		s_1,t_1}+u(s_1,t_1))-b(s_1,t_1,W_{s_1,t_1})\}\mathrm{d}s_1 \mathrm{d}t_1,\text{ for }(s,t)\in \Gamma_0.
\end{align} 	
This is the statement of Theorem \ref{maintheuniq2} which is extended to unbounded monotone drifts in Theorem \ref{maintheuniq1}. 
The proof of Theorem \ref{maintheuniq2} relies on some estimates for an averaging  operator along the sheet (see Lemma \ref{lem:PseudoMetric1}). This result plays a key role in the proof of a Gronwall type lemma (see Lemma \ref{lem:GronwallSheet}) which enables us  to prove path-by-path uniqueness of solutions to \eqref{Eqmainhpde5}. The latter combined with the weak existence yield the existence of a unique strong solution. Note that when the drift coefficient is , similar result can be found in \cite{BDM21b, NuTi97}.

Secondly, in this paper, we prove Malliavin smoothness of the unique solution to the SDE \eqref{Eqmainhpde5}. When the coefficient are smooth, the authors in \cite{NuSa85, NuSa89} showed existence, uniqueness, Malliavin differentiability and smoothness of density of solutions to SDEs on the plane. Here, assuming that the drift is the difference of two componentwise nondecreasing functions, we show that the solution is Malliavin differentiable. In the one parameter case, the Malliavin differentiablity of  solutions to SDEs with bounded and measeurable coefficients was studied in \cite{MBP10} under an additional commutativity assumption. The later assumption was removed in \cite{MMNPZ13}.  It is worth mentioning that in the above work, the Malliavin smoothness of the unique solutions to the SDEs with rough coefficients and driven by Brownian motion is obtained as a byproduct of the method used to study existence and uniqueness. This technique was introduced in \cite{Pro07} and has now been extensively utilised; see for example the work \cite{HP14} for the case of singular SDEs driven by L\'evy noise, \cite{MenTan19} for the case of random coefficients and \cite{AMP20, ABP17a} for the case of  singular SDEs driven by fractional noise.  In order to prove the Malliavin differentiability of the solution to the SDE \eqref{Eqmainhpde5}, we take advantage of Gaussian white noise theory and a local time-space integration formula provided in \cite[Proposition 3.1]{BDM21a} to show that the sequence of approximating sequence of solution converges strongly in $L^2(\Omega,\mathbb{R}^d)$ to the solution of the SDE (compare with \cite{MMNPZ13}) and we use a compactness criteria given in \cite[Lemma 1.2.3]{Nu06} to conclude.
\newpage

Equation \eqref{Eqmainhpde4} can also be written in a differential form as the following hyperbolic stochastic partial differential equation 
\begin{align}\label{Eqmainhpde1}
	\left\{
	\begin{array}{ll}
		\dfrac{\partial^2X_{s,t}}{\partial s\partial t}=b(s,t,X_{s,t})+\dot{W},&(s,t)\in\Gamma,\\&\\
		\partial X=\xi,
	\end{array}
	\right.
\end{align}  
where $\partial X$ is the restriction of $X$ to the boundary $\partial \Gamma=\{0\}\times\R_+\cup\R_+\times\{0\}$ of $\Gamma:=\R_+^2$, $b:\Gamma\times \mathbb{R}^d\to\R^d$ is Borel measurable, $\dot{W}=(\dot{W}^{(1)},\ldots,\dot{W}^{(d)})$ is a $d$-dimensional  white noise on $\Gamma$ given on a probability space $(\Omega,\mathcal{F},\Pb)$ and $(s,t)\longmapsto\xi_{s,t}(\omega)$ is continuous on $\partial \Gamma$ for all $\omega\in\Omega$. Recall that by a $d$-dimensional white noise on $\Gamma$ we mean a mean-zero Gaussian process $\dot{W}=(\dot{W}^{(1)},\ldots,\dot{W}^{(d)})$ indexed by the Borel field $\mathcal{B}(\Gamma)$ on $\Gamma$ with covariance functions
$$
\E\Big[\dot{W}^{(i)}(A)\dot{W}^{(j)}(B)\Big]=\delta_{i,j}|A\cap B|,\quad\forall\,A,B\in\mathcal{B}(\Gamma)
$$ 
where $|\cdot|$ denotes the Lebesgue measure on $\Gamma$ and  $\delta_{i,j}=1$ if $i=j$ and $\delta_{i,j}=0$ otherwise. The process $W=\left(W_{s,t}:=\dot{W}([0,s]\times[0,t]),(s,t)\in \Gamma\right)$ is mean-zero Gaussian process with covariance functions
$$
\E\Big[W^{(i)}_{s,t}W^{(j)}_{\ps,\tp}\Big]=\delta_{i,j}(s\wedge \ps)(t\wedge\tp),\quad\forall\,(s,t),(\ps,\tp)\in \Gamma.
$$
By the Kolmogorov continuity theorem, there exists a continuous version of $W$, still denoted by $W$, which is a  $d$-dimensional Brownian sheet. We consider a 
nondecreasing and right-continuous family $\mathbb{F}=(\mathcal{F}_{s,t})$ of sub-$\sigma$-algebras of $\mathcal{F}$ each of which contains all negligible sets in $(\Omega,\mathcal{F},\Pb)$ such that $W$ and $\xi$ are $\mathbb{F}$-adapted, that is $W_{s,t}$ (respectively, $\xi_{s,t}$) is $\mathcal{F}_{s,t}$-measurable for every $(s,t)\in \Gamma$ (respectively, $(s,t)\in\partial \Gamma$).
We refer the reader to Khoshnevisan \cite{Kh02} for a complete
analysis on multi-parameter processes and their applications.

Equation \eqref{Eqmainhpde1} is a particular case of the quasilinear  stochastic hyperbolic differential equation 
\begin{align}\label{Eqmainhpde2}
	\left\{
	\begin{array}{ll}
		\dfrac{\partial^2 X_{s,t}}{\partial s\partial  t}=b(s,t,X_{s,t})+a(s,t,X_{s,t})\dot{W},&\quad(s,t)\in \Gamma\\&\\
		\partial X=\xi,&
	\end{array}
	\right.  	
\end{align}
where  $a:\,\R_+^2\times\R^d\to\R^d\times\R^d$ is a Borel measurable matrix function. A formal $\frac{\pi}{4}$ rotation transforms \eqref{Eqmainhpde2} into the following nonlinear stochastic wave equation 
\begin{align}\label{Eqmainhpde3}
	\dfrac{\partial^2 Y_{\rho,\theta}}{\partial \rho^2}-\dfrac{\partial^2 Y_{\rho,\theta}}{\partial \theta^2}=\tilde{b}(\rho,\theta,Y_{\rho,\theta})+\tilde{a}(\rho,\theta,Y_{\rho,\theta})\dot{\tilde{W}},\quad(\rho,\theta)\in \tilde{\Gamma},
\end{align}
with the Goursat-Darboux type boundary condition $\partial Y=\tilde{\xi}$,
where $\tilde{\Gamma}=\{(\rho,\theta):\,\theta\geq0\text{ and }|\rho|\leq \theta\}$, $\dot{\tilde{W}}$ is a $d$-dimensional white noise on $\tilde{\Gamma}$, $\tilde{b}(\rho,\theta,y)=b(\frac{\theta+\rho}{\sqrt{2}},\frac{\theta-\rho}{\sqrt{2}},y)$ (the same applies to $\tilde{a}$), $Y_{\rho,\theta}= X_{\frac{\theta+\rho}{\sqrt{2}},\frac{\theta-\rho}{\sqrt{2}}}$,  $\tilde{\xi}_{\theta,\theta}=\xi_{\sqrt{2}\theta,0}$ and $\tilde{\xi}_{-\theta,\theta}=\xi_{0,\sqrt{2}\theta}$. The $\frac{\pi}{4}$ rotation has been used by Carmona and Nualart \cite{CaNu88} (see also \cite[Section 0]{FaNu93} and \cite[Section 1]{QuTi07}) to prove existence and uniqueness of solution to \eqref{Eqmainhpde3} under a different boundary condition when $\tilde{a}$ and $\tilde{b}$ are time-homogeneous. 

\eqref{Eqmainhpde1} can also be seen as a noisy analog of the so-called Darboux problem given by
\begin{align}\label{eqdarboux1}
	\frac{\partial^2y}{\partial s\partial t}=b\Big(s,t,y,\frac{\partial y}{\partial s},\frac{\partial y}{\partial t}\Big) \quad\text{for }(s,t)\in[0,T]\times[0,T],
\end{align}
with the initial conditions 
\begin{align}\label{eqdarboux2}
	y(0,t)=\sigma(t)\,\text{ on }[0,T]\,\text{ and }\,y(s,0)=\tau(s)\,\text{ on }[0,T],
\end{align}
where $\sigma$ and $\tau$ are absolutely continuous on $[0,T]$.
\textcolor{black}{Using Caratheodory's theory} of differential equations, Deimling \cite{De70} proved an existence theorem for the system \eqref{eqdarboux1}-\eqref{eqdarboux2} \textcolor{black}{when} $b$ is Borel measurable in the first two variables and bounded and continuous in the last three variables.  Hence the results obtained here can also be seen as a generalisation to the stochastic setting of the above mentioned one. 

The remainder of the paper is organised as follows: In Section \ref{sec2}, we provide a path-by-path uniqueness result for \eqref{Eqmainhpde5} when the drift $b$ is of linear growth. %In particular, using the Yamada-Watanabe principle, we deduce the existence of a unique strong solution. 
In Section \ref{Malreg}, we study the Malliavin differentiability of the strong solution to \eqref{Eqmainhpde5}. We show that this  solution  is Malliavin differentiable for uniformly bounded drifts and when the drift $b$ is of linear growth, we obtain Malliavin differentiability of the solution only for sufficiently small time parameters.

\section{Existence and uniqueness results}\label{sec2}
In this section, we show that the SDE \eqref{Eqmainhpde5} has a unique strong solution. Our approach is based on the Yamada-Watanabe principle introduced in \cite{NuYe89}. As pointed out earlier, instead of showing the  weak existence and pathwise uniqueness, we show weak existence and path by path uniqueness (which implies pathwise uniqueness as shown in \cite{CG16}).  The following preliminary results that have been obtained by applying a local time-space integration formula for Brownian sheets (see \cite{BDM21a} for more information) are needed to show  path-by-path uniqueness.

\subsection{Preliminary results}\label{prelresul}

Let $f:\,[0,1]^2\times\R^d\to\R$ be a continuous function such that for any $(s,t)\in[0,1]^2$, $f(s,t,\cdot)$ is differentiable and for any $i\in\{1,\cdots,d\}$, the partial derivative $\partial_{x_i}f$ is continuous. We also know from \cite[Proposition 3.1]{BDM21a} that for a $d$-dimensional Brownian sheet $\Big(W_{s,t}:=(W_{s,t}^{(1)},\cdots,W_{s,t}^{(d)});s\geq0,t\geq0\Big)$ defined on a filtered probability space and for any $(s,t)\in[0,1]^2$ and any $i\in\{1,\cdots,d\}$, we have
\begin{align}\label{eq:EisenSheetdD01}
	&\int_0^s\int_0^t\partial_{x_i}f(s_1,t_1,W_{s_1,t_1})\mathrm{d}t_1\mathrm{d}s_1\notag\\
	=&-\int_0^s\int_0^tf(s_1,t_1,W_{s_1,t_1})\frac{d_{t_1}W^{(i)}_{s_1,t_1}}{s_1}\mathrm{d}s_1-\int_0^s\int_{1-t}^1f(s_1,1-t_1,\widehat{W}_{s_1,t_1})\frac{d_{t_1}B^{(i)}_{s_1,t_1}}{s_1}\mathrm{d}s_1\\
	&+\int_0^s\int_{1-t}^1f(s_1,1-t_1,\widehat{W}_{s_1,t_1})\frac{\widehat{W}^{(i)}_{s_1,t_1}}{s_1(1-t_1)}\mathrm{d}t_1\mathrm{d}s_1,\nonumber
\end{align}
where $\widehat{W}^{(i)}:=(\widehat{W}^{(i)}_{s_1,t_1};0\leq s_1,t_1\leq1)$ and $B^{(i)}:=(B^{(i)}_{s_1,t_1};0\leq s_1,t_1\leq1)$ is a standard Brownian sheet with respect to the filtration of $\widehat{W}^{(i)}$, independent of $(W^{(i)}_{s,1},s\geq0)$.

The following result will be extensively used and can be found in \cite{BDM21b}.

\begin{prop}\label{prop:DavieSheet1dd}
	Let $W:=\left(W^{(1)}_{s,t},\ldots,W^{(d)}_{s,t};(s,t)\in[0,1]^2\right)$ be a $\R^d$-valued Brownian sheet defined on a filtered probability space $(\Omega,\mathcal{F},\mathbb{F},\Pb)$, where $\mathbb{F}=(\mathcal{F}_{s,t};s,t\in[0,1])$. Let $b\in\mathcal{C}\left([0,1]^2,\mathcal{C}^1(\R^d)\right)$, $\Vert b\Vert_{\infty}\leq1$.
	Let $(a,\pa,\ve,\vep)\in[0,1]^4$. Then there exist positive constants $\alpha$ and $C$ (independent of $\nabla_yb$, $a$, $\pa$, $\ve$ and $\vep$) such that 
	\begin{align}\label{eq:DavieSheet0dd}
		\E\Big[\exp\Big(\alpha\vep \ve \Big|\int_0^1\int_0^1\nabla_yb\left(s,t,\widetilde{W}^{\ve,\vep}_{s,t}\right)\mathrm{d}t\mathrm{d}s\Big|^2\Big)\Big]\leq C.
	\end{align} 
	Here $\nabla_yb$ denotes the gradient of $b$ with respect to the third variable, $|\cdot|$ is the usual norm on $\R^d$ and  the $\R^d$-valued two-parameter Gaussian process $\widetilde{W}^{\ve,\vep}:=\Big(\widetilde{W}^{(\ve,\vep,1)}_{s,t},\ldots,\widetilde{W}^{(\ve,\vep,d)}_{s,t};(s,t)\in[0,1]^2\Big)$ is given by 
	$$
	\widetilde{W}^{(\ve,\vep,i)}_{s,t}=W^{(i)}_{\pa+\vep s,a+\ve t}-W^{(i)}_{\pa,a+\ve t}-W^{(i)}_{\pa+\vep s,a}+W^{(i)}_{\pa,a}\quad\text{for all }i\in\{1,\ldots,d\}.
	$$ 
\end{prop}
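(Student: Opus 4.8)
The plan is to strip the gradient off $\nabla_y b$ by means of the local time--space formula \eqref{eq:EisenSheetdD01}, and to exploit that the factor produced by the chain rule cancels the prefactor $\ve\vep$ \emph{exactly}; this cancellation is what makes the estimate uniform in $a,\pa,\ve,\vep$ and in $\nabla_y b$. First I would identify the law of the integrator. For each $i$ the field $(s,t)\mapsto\widetilde W^{(\ve,\vep,i)}_{s,t}$ is the increment of $W^{(i)}$ over the rectangle $[\pa,\pa+\vep s]\times[a,a+\ve t]$, hence a centred Gaussian field with covariance $\ve\vep\,(s\wedge \ps)(t\wedge\tp)$, and the $d$ components are independent. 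Consequently $\widetilde W^{\ve,\vep}$ has the same law as $\sqrt{\ve\vep}\,W$ for a standard $\R^d$-valued Brownian sheet $W$ on $[0,1]^2$. Since the left-hand side of \eqref{eq:DavieSheet0dd} depends only on that law, it suffices to bound $\E\big[\exp(\alpha\ve\vep\,|I|^2)\big]$ with
\[ I=\int_0^1\int_0^1\nabla_y b\big(s,t,\sqrt{\ve\vep}\,W_{s,t}\big)\,\diffns t\,\diffns s. \]

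Next I set $\lambda=\sqrt{\ve\vep}$ and $g(s,t,x)=b(s,t,\lambda x)$, so that $g\in\mathcal{C}([0,1]^2,\mathcal{C}^1(\R^d))$, $\Vert g\Vert_\infty\le1$, and $\partial_{x_i}g(s,t,x)=\lambda\,\partial_{x_i}b(s,t,\lambda x)$. Applying \eqref{eq:EisenSheetdD01} to $f=g$ at $s=t=1$ and for each $i$, the $i$-th coordinate of $I$ becomes $I_i=\lambda^{-1}R_i$, where $R_i=R_i^{(1)}+R_i^{(2)}+R_i^{(3)}$ is the right-hand side of \eqref{eq:EisenSheetdD01}, in which only $g$ (never its gradient) occurs. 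The decisive point is the exact cancellation
\[ \ve\vep\,|I|^2=\lambda^2\sum_{i=1}^d\lambda^{-2}R_i^2=\sum_{i=1}^d R_i^2, \]
after which the quantity to be controlled depends neither on $\ve,\vep$ nor on the magnitude of $\nabla_y b$.

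It then remains to show $\E[\exp(\alpha\sum_i R_i^2)]\le C$ for a small universal $\alpha$. I would bound each of the $3d$ summands in the sub-Gaussian Orlicz norm $\Vert\cdot\Vert_{\psi_2}$ (for which $\E[\exp(Z^2/K^2)]\le2$ when $K=\Vert Z\Vert_{\psi_2}$). For the two stochastic-integral terms $R_i^{(1)},R_i^{(2)}$ a direct second-moment computation, using $\Vert g\Vert_\infty\le1$ together with $\int_u^1 s_1^{-1}\diffns s_1=\ln(1/u)$ and $\int_0^1(\ln(1/u))^2\diffns u=2<\infty$, bounds their (conditional) quadratic variations by a universal constant, and the exponential martingale inequality then gives $\Vert R_i^{(1)}\Vert_{\psi_2},\Vert R_i^{(2)}\Vert_{\psi_2}\le C_0$ uniformly. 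The third term is a Lebesgue integral with the singular kernel $(s_1(1-t_1))^{-1}$ against the reversed sheet $\widehat W^{(i)}_{s_1,t_1}$, whose variance is of order $s_1(1-t_1)$ and which vanishes on $\{t_1=1\}$; bounding $|g|\le1$ and using Minkowski's integral inequality for $\Vert\cdot\Vert_{\psi_2}$ yields
\[ \big\Vert R_i^{(3)}\big\Vert_{\psi_2}\le c\int_0^1\int_0^1\frac{\sqrt{s_1(1-t_1)}}{s_1(1-t_1)}\,\diffns t_1\,\diffns s_1=4c<\infty. \]
Combining the $3d$ estimates through the triangle inequality for $\Vert\cdot\Vert_{\psi_2}$ (within each $i$) and a generalised H\"older inequality over the finitely many factors (the $d$ components being independent) produces a universal $\alpha>0$ and $C$ (one may take $C=2$) with $\E[\exp(\alpha\sum_i R_i^2)]\le C$, which is \eqref{eq:DavieSheet0dd}.

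The hard part will be the third, non-martingale term: its kernel $(s_1(1-t_1))^{-1}$ is not integrable on its own, so the estimate closes only because the reversed sheet degenerates as $t_1\to1$ (equivalently, because the bridge drift $-\widehat W/(1-t_1)$ that produces the second and third terms forces $\widehat W_{s_1,t_1}\to0$ there). Recognising this degeneracy and using the $\psi_2$--Minkowski bound rather than a crude supremum estimate is the delicate step; the remainder is two-parameter stochastic-calculus bookkeeping, and the structural heart of the argument is the cancellation $\ve\vep\cdot\lambda^{-2}=1$ displayed above.
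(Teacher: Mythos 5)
Your proposal is correct, and it is worth noting at the outset that this paper does not actually prove Proposition \ref{prop:DavieSheet1dd}: it imports it from \cite{BDM21b}. Measured against the paper's closest internal analogue (Lemma \ref{lemMalEstimate1} and the Appendix), your argument uses the same engine --- apply the local time--space formula \eqref{eq:EisenSheetdD01} to trade $\nabla_y b$ for three terms involving only $b$ itself, then prove Gaussian-type exponential bounds for each --- but differs in two respects. First, you make the uniformity in $(a,\pa,\ve,\vep)$ and in $\nabla_yb$ rest on the law identification $\widetilde{W}^{\ve,\vep}\overset{\text{(law)}}{=}\sqrt{\ve\vep}\,W$ together with the chain-rule cancellation $\ve\vep\,\lambda^{-2}=1$ for $\lambda=\sqrt{\ve\vep}$; this is indeed the mechanism that makes the constants universal, and it is a step the internal lemma (stated for the unscaled sheet, without the $\ve\vep$ prefactor) never needs. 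Second, your bookkeeping runs through sub-Gaussian Orlicz norms (exponential martingale inequality plus $\psi_2$-Minkowski), whereas the paper's Lemma \ref{lemMalEstimate1} uses Jensen's inequality, the Barlow--Yor moment bounds and a term-by-term series expansion of the exponential; the two are equivalent in strength, and yours is more compact. Three small repairs are needed, none affecting validity: (i) treat $\ve\vep=0$ separately (trivial, since the exponent then vanishes and the left-hand side equals $1$); (ii) delete the parenthetical ``the $d$ components being independent'' --- the $R_i$ are \emph{not} independent in general, because $b$ couples all components of $W$ inside each $R_i$, but your generalised H\"older step $\E\big[\prod_{i}e^{\alpha R_i^2}\big]\le\prod_{i}\E\big[e^{\alpha d R_i^2}\big]^{1/d}$ never uses independence, only the uniform $\psi_2$ bounds; (iii) to justify the quadratic-variation bound behind $R_i^{(1)}$ and $R_i^{(2)}$, either invoke a stochastic Fubini theorem to rewrite the iterated integral as a single It\^o integral whose integrand is bounded by $\log(1/q)$ (this is what your computation $\int_0^1(\log(1/u))^2\,\mathrm{d}u=2$ presupposes), or, more simply, apply $\psi_2$-Minkowski in the $s_1$ variable to the inner martingales, whose quadratic variations are bounded by $1/s_1$, and use $\int_0^1 s_1^{-1/2}\,\mathrm{d}s_1=2$.
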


\textcolor{black}{For every $0\leq a< \gamma\leq 1$, $0\leq \pa< \gamma'\leq 1$ and for $(x,y)\in\mathbb{R}^d$  let us define the function $\varrho$ by: $$\varrho(x,y)=\int_{\pa}^{\gamma'}\int_a^{\gamma}\Big\{b(s,t,W_{s,t}+x)-b(s,t,W_{s,t}+y)\Big\}\mathrm{d}t\mathrm{d}s.$$
	Here is a direct consequence of the previous estimation.}
\begin{corollary}\label{corol:DavieSheet1dds1}
	Let $b:[0,1]^2\times\R^d\to\R$ be a bounded and Borel measurable function such that $\Vert b\Vert_{\infty}\leq1$. Let $\alpha$, $C$ and $\widetilde{W}^{\ve,\vep}$ be defined as in Proposition  \ref{prop:DavieSheet1dd}. Then the following two bounds are valid: 
	\begin{enumerate}
		\item For every $(x,y)\in\R^{2d}$, $x\neq y$ and every $(\ve,\vep)\in[0,1]^2$, we have
		\begin{align}\label{eq:DavieSheet02dd}
			\E\Big[\exp\Big(\frac{\alpha\vep \ve }{|x-y|^2}\Big|\int_0^1\int_0^1\left\{b(s,t,\widetilde{W}^{\vep,\ve}_{s,t}+x)-b(s,t,\widetilde{W}^{\vep,\ve}_{s,t}+y)\right\}\mathrm{d}t\mathrm{d}s\Big|^2\Big)\Big]\leq C.
		\end{align}
		\item  
		For any $(x,y)\in\R^2$ and any $\eta>0$, we have
		\textcolor{black}{\begin{align}\label{eq:EstDavieSigma1}
				\Pb\left(|\varrho(x,y)|\geq\eta\sqrt{(\gamma-a)(\gamma'-\pa)}|x-y|\right)
				\leq Ce^{-\alpha\eta^2}.
			\end{align}
		}
	\end{enumerate}
\end{corollary}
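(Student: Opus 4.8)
The plan is to derive both bounds from the exponential gradient estimate of Proposition~\ref{prop:DavieSheet1dd}: bound (1) by expressing the increment $b(\cdot+x)-b(\cdot+y)$ as a line integral of $\nabla_y b$, and bound (2) by rescaling the rectangle $[a,\gamma]\times[\pa,\gamma']$ to the unit square and conditioning on the white noise lying off that rectangle so as to reduce to (1). Throughout I use the normalization $\widetilde W^{\ve,\vep}$ of Proposition~\ref{prop:DavieSheet1dd} (the estimate is symmetric under $\ve\leftrightarrow\vep$). For (1) I would first treat $b\in\mathcal{C}([0,1]^2,\mathcal{C}^1(\R^d))$ with $\|b\|_\infty\le1$ and $x\neq y$ (the degenerate case $\ve\vep=0$ being trivial, as then $\widetilde W^{\ve,\vep}\equiv0$ and the left side equals $1$). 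By the fundamental theorem of calculus,
\[
b(s,t,w+x)-b(s,t,w+y)=(x-y)\cdot\int_0^1\nabla_y b\big(s,t,w+y+\theta(x-y)\big)\,\mathrm{d}\theta,
\]
so, setting $V(\theta):=\int_0^1\!\int_0^1\nabla_y b(s,t,\widetilde W^{\ve,\vep}_{s,t}+y+\theta(x-y))\,\mathrm{d}t\,\mathrm{d}s$, a Cauchy--Schwarz contraction against $x-y$ followed by Jensen's inequality in $\theta$ gives
\[
\frac{1}{|x-y|^2}\Big|\int_0^1\!\!\int_0^1\{b(s,t,\widetilde W^{\ve,\vep}_{s,t}+x)-b(s,t,\widetilde W^{\ve,\vep}_{s,t}+y)\}\,\mathrm{d}t\,\mathrm{d}s\Big|^2\le\int_0^1|V(\theta)|^2\,\mathrm{d}\theta.
\]
Multiplying by $\alpha\ve\vep$, exponentiating, and using convexity of $\exp$ together with Fubini, I would reduce the claimed moment to $\int_0^1\E[\exp(\alpha\ve\vep|V(\theta)|^2)]\,\mathrm{d}\theta$, and bound each integrand by $C$ via Proposition~\ref{prop:DavieSheet1dd} applied to the translated drift $b_\theta(s,t,z):=b(s,t,z+y+\theta(x-y))$, which has $\|b_\theta\|_\infty\le1$ and satisfies $\int_0^1\!\int_0^1\nabla_y b_\theta(s,t,\widetilde W^{\ve,\vep})\,\mathrm{d}t\,\mathrm{d}s=V(\theta)$. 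This establishes (1) for smooth $b$.

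To reach a general bounded Borel $b$ I would mollify it into functions $b_n\in\mathcal{C}([0,1]^2,\mathcal{C}^1(\R^d))$ with $\|b_n\|_\infty\le1$ and $b_n\to b$ Lebesgue-a.e.\ on $[0,1]^2\times\R^d$. Since $\widetilde W^{\ve,\vep}_{s,t}$ is nondegenerate Gaussian for $s,t>0$ and hence has a density, Fubini yields $b_n(s,t,\widetilde W^{\ve,\vep}_{s,t}+x)\to b(s,t,\widetilde W^{\ve,\vep}_{s,t}+x)$ for a.e.\ $(s,t,\omega)$; dominated convergence (integrands bounded by $1$) promotes this to a.s.\ convergence of the double integrals, and Fatou's lemma carries the uniform bound $C$ through to the limit. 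This proves (1).

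For (2), fix $x\neq y$, set $\ve=\gamma-a$, $\vep=\gamma'-\pa$, and apply the substitution $s=\pa+\vep s_1$, $t=a+\ve t_1$ to obtain
\[
\varrho(x,y)=\ve\vep\int_0^1\!\!\int_0^1\big\{b(\pa+\vep s_1,a+\ve t_1,W_{\pa+\vep s_1,a+\ve t_1}+x)-b(\pa+\vep s_1,a+\ve t_1,W_{\pa+\vep s_1,a+\ve t_1}+y)\big\}\,\mathrm{d}t_1\,\mathrm{d}s_1.
\]
By the very definition of $\widetilde W^{\ve,\vep}$ one has $W_{\pa+\vep s_1,a+\ve t_1}=\widetilde W^{\ve,\vep}_{s_1,t_1}+H_{s_1,t_1}$ with $H_{s_1,t_1}:=W_{\pa,a+\ve t_1}+W_{\pa+\vep s_1,a}-W_{\pa,a}$. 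Since $\widetilde W^{\ve,\vep}$ is a functional of the white noise on $R:=[\pa,\gamma']\times[a,\gamma]$ while $H$ is a functional of the white noise on $R^c$, the field $\widetilde W^{\ve,\vep}$ is independent of $\mathcal{G}:=\sigma\{W_{s,t}:s\le\pa\text{ or }t\le a\}\ni H$. Conditioning on $\mathcal{G}$, the function $(s_1,t_1)\mapsto H_{s_1,t_1}$ is frozen, so the Borel drift $\bar b(s_1,t_1,z):=b(\pa+\vep s_1,a+\ve t_1,z+H_{s_1,t_1})$ has $\|\bar b\|_\infty\le1$ and recasts the inner integral as $\int_0^1\!\int_0^1\{\bar b(s_1,t_1,\widetilde W^{\ve,\vep}+x)-\bar b(s_1,t_1,\widetilde W^{\ve,\vep}+y)\}$. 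Applying (1) conditionally to $\bar b$ (legitimate because $\widetilde W^{\ve,\vep}\perp\mathcal{G}$) bounds $\E[\exp(\alpha|\varrho(x,y)|^2/(\ve\vep|x-y|^2))\mid\mathcal{G}]$ by $C$; taking expectations and using the Chernoff bound $\Pb(|\varrho(x,y)|\ge\eta\sqrt{\ve\vep}\,|x-y|)\le e^{-\alpha\eta^2}\,\E[\exp(\alpha|\varrho(x,y)|^2/(\ve\vep|x-y|^2))]$ then yields (2), since $\sqrt{\ve\vep}=\sqrt{(\gamma-a)(\gamma'-\pa)}$.

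The two genuinely delicate steps are the passage in (1) from $\mathcal{C}^1$ to merely bounded Borel drifts — which hinges on the nondegeneracy of $\widetilde W^{\ve,\vep}_{s,t}$ so that a.e.\ convergence of the mollifications is preserved after composition with $\widetilde W^{\ve,\vep}$ — and, in (2), the independence of the rescaled rectangle increment $\widetilde W^{\ve,\vep}$ from the boundary shift $H$, which is precisely what permits invoking (1) with $H$ frozen under the conditional expectation. Everything else (Cauchy--Schwarz, Jensen, Chernoff) is routine.
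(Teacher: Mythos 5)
Your proof is correct and takes essentially the same route the paper intends: the paper presents the corollary as a ``direct consequence'' of Proposition~\ref{prop:DavieSheet1dd} (details deferred to \cite{BDM21b}), and your argument --- fundamental theorem of calculus plus Jensen plus mollification/Fatou using the nondegeneracy of $\widetilde W^{\ve,\vep}_{s,t}$ for part (1), then rescaling of the rectangle, independence of the increment field $\widetilde W^{\ve,\vep}$ from the frozen boundary term $H$, and a Chernoff bound for part (2) --- is precisely that derivation, with the constants' uniformity in $b$ correctly exploited in the conditional application. The only caveat is cosmetic and lies in the paper's statement rather than in your proof: \eqref{eq:EstDavieSigma1} is vacuous (indeed false) when $x=y$, so your implicit restriction to $x\neq y$ is the correct reading.
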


For any positive integer $n$, we divide $[0,1]$ into $2^{n}$ intervals $I_{nk}=]k2^{-n},(k+1)2^{-n}]$. We define the random real valued function $\varrho_{nk\kp}$ on $[-1,1]^{2d}$ by
\begin{align*}
	\varrho_{nk\kp}(x,y):=\int_{I_{n\kp}}\int_{I_{nk}}\{b(s,t,W_{s,t}+x)-b(s,t,W_{s,t}+y)\}\,\mathrm{d}t\mathrm{d}s.
\end{align*}
The next two lemmas provide an estimate for $\varrho_{nk\kp}(x,y)$ and $\varrho_{nk\kp}(0,x)$ for every dyadic numbers $x,y\in\mathbf{Q}:=[-1,1]^d$. Their proofs can be found in \cite[Section 5]{BDM21b}. %The second lemma plays a central role in the proof of the path by path uniqueness. 
\begin{lemma}\label{lem:PseudoMetric1}
	Suppose $b:\,[0,1]^2\times\R^d\to\R$ is a Borel measurable function such that \textcolor{black}{$|b(s,t,x)|\leq1$} everywhere on $[0,1]^2\times\R^d$. Then there exists a subset $\Omega_{1}$ of $\Omega$ with $\Pb(\Omega_{1})=1$ such that for all $\omega\in\Omega_1$,
	\begin{align*}
		|\varrho_{nk\kp}(x,y)(\omega)|\leq C_1(\omega)2^{-n}\Big[\sqrt{n}+\Big(\log^+\frac{1}{|x-y|}\Big)^{1/2}\Big]|x-y|\,\text{ on }\Omega_{1}
	\end{align*}
	for all dyadic numbers $x,\,y\in\mathbf{Q}$ and all choices of integers $n,\,k,\,\kp$ with $n\geq1$, $0\leq k,\kp\leq 2^n-1$, where $C_1(\omega)$ is a positive random constant that does not depend on $x$, $y$, $n$, $k$ and $\kp$.
\end{lemma}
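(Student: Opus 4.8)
The plan is to deduce the estimate from the sub-Gaussian tail bound \eqref{eq:EstDavieSigma1} of Corollary~\ref{corol:DavieSheet1dds1} by a Borel--Cantelli argument, taking full advantage of the additivity of $\varrho_{nk\kp}$ in its spatial arguments. Since $b(s,t,W_{s,t}+x)-b(s,t,W_{s,t}+y)=\{b(s,t,W_{s,t}+x)-b(s,t,W_{s,t}+z)\}+\{b(s,t,W_{s,t}+z)-b(s,t,W_{s,t}+y)\}$, one has $\varrho_{nk\kp}(x,y)=\varrho_{nk\kp}(x,z)+\varrho_{nk\kp}(z,y)$ for every $z$, and this is what will let me propagate a bound proved only for nearby dyadic points to all dyadic pairs. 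Applying \eqref{eq:EstDavieSigma1} with $[\pa,\gamma']=I_{n\kp}$ and $[a,\gamma]=I_{nk}$, so that $\sqrt{(\gamma-a)(\gamma'-\pa)}=2^{-n}$, gives for fixed parameters
\begin{align*}
	\Pb\Big(|\varrho_{nk\kp}(x,y)|\geq \eta\,2^{-n}|x-y|\Big)\leq Ce^{-\alpha\eta^2},\qquad \eta>0.
\end{align*}

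First I would fix the discrete family on which Borel--Cantelli is run. For each level $m\geq1$, consider the points of $\mathbf{Q}$ whose coordinates are integer multiples of $2^{-m}$, together with the pairs $(x,y)$ of such points that are neighbours, i.e. $|x-y|\leq c_d 2^{-m}$; there are at most $C_d 2^{md}$ of them. For such a pair $\log^+(1/|x-y|)\asymp m$, so the target bound reads $|\varrho_{nk\kp}(x,y)|\leq C\,2^{-n}(\sqrt n+\sqrt m)\,2^{-m}$, and I would take the threshold $\eta=K(\sqrt n+\sqrt m)$. Using $(\sqrt n+\sqrt m)^2\geq n+m$, the probability of the bad event is at most $Ce^{-\alpha K^2(n+m)}$. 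Summing over the $2^{2n}$ choices of $(k,\kp)$, the $C_d2^{md}$ neighbour pairs, and all $n,m\geq1$ produces
\begin{align*}
	\sum_{n,m\geq1} C_d\,2^{2n}\,2^{md}\,C\,e^{-\alpha K^2(n+m)},
\end{align*}
which is finite once $K$ is chosen so large that $\alpha K^2>(d+2)\log 2$. By Borel--Cantelli there is then a full-measure set $\Omega_1$ for which, at each $\omega\in\Omega_1$, only finitely many tuples are exceptional; enlarging the constant to absorb these finitely many cases (where the crude bound $|\varrho_{nk\kp}(x,y)|\leq 2\cdot2^{-2n}$ coming from $|b|\leq1$ applies) yields a random $C_1(\omega)$ with $|\varrho_{nk\kp}(x,y)|\leq C_1(\omega)2^{-n}(\sqrt n+\sqrt m)2^{-m}$ for all neighbouring level-$m$ pairs.

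Finally I would remove the neighbour restriction by chaining. Given arbitrary dyadic $x\neq y$ in $\mathbf{Q}$ with $2^{-(p+1)}<|x-y|\leq 2^{-p}$, I approximate $x$ and $y$ by their level-$\ell$ truncations $x_\ell,y_\ell$ and write the telescoping sum $\varrho_{nk\kp}(x,y)=\varrho_{nk\kp}(x_p,y_p)+\sum_{\ell\geq p}\{\varrho_{nk\kp}(x_{\ell+1},x_\ell)-\varrho_{nk\kp}(y_{\ell+1},y_\ell)\}$, in which the first term joins neighbouring level-$p$ points and every increment joins neighbouring level-$(\ell+1)$ points, so each is controlled by the previous step. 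Since the level-$\ell$ contribution is bounded by $C_1(\omega)2^{-n}(\sqrt n+\sqrt\ell)2^{-\ell}$, the geometric factor $2^{-\ell}$ dominates the slowly growing $\sqrt\ell$, the series converges, and its sum is comparable to the leading $\ell=p$ term, giving $|\varrho_{nk\kp}(x,y)|\leq C_1'(\omega)2^{-n}(\sqrt n+\sqrt p)2^{-p}$. As $2^{-p}\asymp|x-y|$ and $\sqrt p\asymp(\log^+(1/|x-y|))^{1/2}$, this is exactly the claimed estimate. The main obstacle is the bookkeeping of the second step: the union bound carries the exponentially large entropy factor $2^{2n}2^{md}$, and the whole argument hinges on the Gaussian tail $e^{-\alpha\eta^2}$ with $\eta^2\geq n+m$ being strong enough to beat it once $K$ is taken large. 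This is precisely where the sub-Gaussian nature of \eqref{eq:EstDavieSigma1}, rather than a merely exponential tail, is essential, and it is also what forces the resolution $m$ of the admissible dyadic points to be coupled to the exponent rather than left free.
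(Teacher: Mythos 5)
Your proposal is correct and takes essentially the same route as the proof the paper relies on (deferred to \cite[Section 5]{BDM21b}, which adapts Davie's original argument to the sheet): the sub-Gaussian tail bound \eqref{eq:EstDavieSigma1} of Corollary \ref{corol:DavieSheet1dds1} applied on the squares $I_{nk}\times I_{n\kp}$ so that $\sqrt{(\gamma-a)(\gamma'-a')}=2^{-n}$, a Borel--Cantelli argument over neighbouring dyadic pairs at each resolution $m$ with threshold of order $\sqrt n+\sqrt m$ to beat the entropy factor $2^{2n}2^{md}$, and a telescoping/chaining step exploiting the additivity $\varrho_{nk\kp}(x,y)=\varrho_{nk\kp}(x,z)+\varrho_{nk\kp}(z,y)$ to reach arbitrary dyadic pairs. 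The minor loose ends (the case $|x-y|>1$, and absorbing the finitely many exceptional events via the crude bound $|\varrho_{nk\kp}|\leq 2\cdot 4^{-n}$) are handled exactly as you indicate, by enlarging the random constant.
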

\begin{lemma}\label{lem:PseudoMetric2}
	Suppose $b$ is as in Lemma \ref{lem:PseudoMetric1}. Then there exists a subset $\Omega_{2}$ of $\Omega$ with $\Pb(\Omega_{2})=1$  such that for all $\omega\in\Omega_2$, for any choice of $n,\,k,\,\kp$, and any choice of a dyadic number $x\in\mathbf{Q}$
	\begin{align}\label{eq:coPseudoMetric2}
		\left|\varrho_{nk\kp}(0,x)(\omega)\right|\leq C_{2}(\omega)\sqrt{n}2^{-n}\Big(|x|+2^{-4^{n}}\Big),
	\end{align}
	where $C_2(\omega)$ is a positive random constant that does not depend on $x$, $n$, $k$ and $\kp$.
\end{lemma}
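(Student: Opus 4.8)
The plan is to estimate the quantity $\varrho_{nk\kp}(0,x)$ by exploiting the fact that the second tail bound \eqref{eq:EstDavieSigma1} in Corollary~\ref{corol:DavieSheet1dds1} gives exponential control on the increment $\varrho(x,y)$ scaled by $\sqrt{(\gamma-a)(\gamma'-\pa)}\,|x-y|$. First I would specialize \eqref{eq:EstDavieSigma1} to the dyadic rectangle $I_{nk}\times I_{n\kp}$, for which $(\gamma-a)=(\gamma'-\pa)=2^{-n}$, so $\sqrt{(\gamma-a)(\gamma'-\pa)}=2^{-n}$, and to the pair $(x,y)=(x,0)$. This yields, for each fixed dyadic $x$, each $n$ and each admissible $k,\kp$,
\begin{align*}
	\Pb\Bigl(|\varrho_{nk\kp}(0,x)|\geq \eta\, 2^{-n}|x|\Bigr)\leq C e^{-\alpha\eta^2}.
\end{align*}
The difficulty is that the factor $|x|$ degenerates as $x$ approaches $0$: a bound of the form $\eta\,2^{-n}|x|$ is useless for very small $x$, and we want a single random constant $C_2(\omega)$ valid \emph{simultaneously} over \emph{all} dyadic $x\in\mathbf{Q}$, all $n$, and all $k,\kp$. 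The extra additive term $2^{-4^{n}}$ in \eqref{eq:coPseudoMetric2} is precisely what absorbs this small-$x$ degeneracy, and producing it is the crux of the argument.

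Next I would set up a Borel--Cantelli scheme along the dyadics. The natural choice is to pick the threshold $\eta=\eta_n$ growing like a multiple of $\sqrt{n}$, say $\eta_n=\kappa\sqrt{n}$ for a constant $\kappa$ to be fixed, so that the tail bound becomes $Ce^{-\alpha\kappa^2 n}$. One then forms the union over all triples $(k,\kp)$ (there are at most $2^{2n}$ of them) and over all relevant dyadic $x$ at scale $n$. The key is to count the dyadic $x\in\mathbf{Q}=[-1,1]^d$ that must be treated at level $n$: restricting attention to dyadics of denominator $2^{N}$ for $N$ up to some $N(n)$ gives polynomially-in-$2^{N}$ many points, and choosing $N(n)$ comparable to $n$ keeps the combinatorial factor at most $2^{Cnd}$, so that the total probability $2^{Cnd}\,C e^{-\alpha\kappa^2 n}$ is summable once $\kappa$ is taken large enough. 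Borel--Cantelli then produces a full-measure set $\Omega_2$ on which, for all large $n$, the bound $|\varrho_{nk\kp}(0,x)|\leq \kappa\sqrt{n}\,2^{-n}|x|$ holds for every such $x$.

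The remaining step is to handle dyadic $x$ whose denominator exceeds the cutoff $2^{N(n)}$, i.e.\ the very small or very finely resolved points that escape the finite union above. Here I would use the boundedness $|b|\leq 1$ directly: since $\varrho_{nk\kp}(0,x)$ is an integral of a difference of two functions each bounded by $1$ over a rectangle of area $2^{-2n}$, one has the trivial bound $|\varrho_{nk\kp}(0,x)|\leq 2\cdot 2^{-2n}$. For these exceptional $x$ the term $2^{-4^{n}}$ in \eqref{eq:coPseudoMetric2} is designed so that $\sqrt{n}\,2^{-n}\cdot 2^{-4^{n}}$ dominates the crude deterministic bound—this forces the doubly-exponential cutoff $4^{n}$ in the exponent, explaining its appearance. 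Combining the probabilistic estimate on the finite dyadic net with the deterministic estimate on the complement, and enlarging $C_2(\omega)$ to cover the finitely many small $n$, gives \eqref{eq:coPseudoMetric2} uniformly. I expect the main obstacle to be the bookkeeping that reconciles the net spacing $2^{-N(n)}$, the combinatorial count, and the summability of the Borel--Cantelli series, so that the leftover term is exactly of order $2^{-4^{n}}$ rather than something weaker; this is where the precise calibration of $N(n)$ against the exponential rate $e^{-\alpha\kappa^2 n}$ matters.
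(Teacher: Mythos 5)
Your opening move is sound and is indeed where the genuine proof starts: specializing \eqref{eq:EstDavieSigma1} to $I_{nk}\times I_{n\kp}$ gives $\Pb\bigl(|\varrho_{nk\kp}(0,x)|\geq\eta\,2^{-n}|x|\bigr)\leq Ce^{-\alpha\eta^2}$, and a Borel--Cantelli union over the $2^{2n}$ rectangles and a dyadic net of level $N(n)\sim n$ with $\eta_n=\kappa\sqrt n$ is summable for $\kappa$ large. The fatal problem is your last step. You claim $2^{-4^n}$ is calibrated so that $\sqrt n\,2^{-n}\cdot 2^{-4^n}$ \emph{dominates} the crude bound $|\varrho_{nk\kp}(0,x)|\leq 2\cdot 2^{-2n}$; the inequality goes the other way, since $\sqrt n\,2^{-n}2^{-4^n}\ll 2^{-2n}$ for every $n\geq1$. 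The trivial area bound sits below the right-hand side of \eqref{eq:coPseudoMetric2} only when $|x|+2^{-4^n}$ is of order at least $2^{-n}/\sqrt n$, i.e.\ it covers \emph{large} $x$, whereas the dyadics escaping your finite net are the finely resolved ones, which include points of arbitrarily small magnitude. Concretely, take $x$ with a single nonzero coordinate equal to $2^{-N}$ with $N(n)<N\leq n^{2}$: this $x$ is outside your net, and \eqref{eq:coPseudoMetric2} demands $|\varrho_{nk\kp}(0,x)|\leq C_2\sqrt n\,2^{-n}\bigl(2^{-N}+2^{-4^n}\bigr)\approx C_2\sqrt n\,2^{-n}2^{-N}$, which is far smaller than $2\cdot2^{-2n}$; neither of your two devices reaches it. With your scheme the best additive term you could honestly produce is of order $2^{-n}/\sqrt n$, not $2^{-4^n}$, and that weaker statement is useless downstream: the Gronwall argument of Lemma \ref{lem:GronwallSheet} needs the additive term to lie below $\beta(n)\leq 2^{-4^{2n/3}}$, i.e.\ to be doubly exponentially small.

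What the actual proof does (this paper defers it to \cite[Section 5]{BDM21b}, following the scheme of \cite{Da07}) is a chaining argument rather than a one-shot union bound: write the dyadic $x$ as a telescoping sum of its binary truncations $x^{(m)}$, so that $\varrho_{nk\kp}(0,x)=\sum_{m}\varrho_{nk\kp}\bigl(x^{(m)},x^{(m+1)}\bigr)$ with $|x^{(m+1)}-x^{(m)}|\leq\sqrt d\,2^{-m-1}$, and apply Borel--Cantelli to the family of (parent, child) pairs with thresholds adapted to both the level $m$ and the magnitude $2^{-l}\sim|x|$. The crucial count is that there are only about $2^{d(m-l)}$ level-$m$ dyadics of magnitude at most $2^{-l}$, so thresholds $\eta\sim\kappa\bigl(\sqrt n+\sqrt{m-l}+\sqrt{\log(m+2)}\bigr)$ keep the series summable, and summing the increment bounds over $m\geq l$ yields $|\varrho_{nk\kp}(0,x)|\leq C(\omega)2^{-n}\bigl(\sqrt n+\sqrt{\log(l+2)}\bigr)2^{-l}$. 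The role of $2^{-4^n}$ is precisely to absorb this iterated-logarithmic correction, via the elementary inequality $\sqrt{\log(l+2)}\,2^{-l}\leq C\sqrt n\bigl(2^{-l}+2^{-4^n}\bigr)$ for all $l$ (for $l\leq 4^n$ because $\sqrt{\log(l+2)}\leq C\sqrt n$, and for $l>4^n$ by monotonicity of $l\mapsto\sqrt{\log(l+2)}\,2^{-l}$). No direct union bound, however you truncate it, can substitute for this: covering all dyadics up to level $4^n$ in one shot forces $\eta\sim 2^{n}$, which destroys the factor $\sqrt n$ in \eqref{eq:coPseudoMetric2}.
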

Observe that  
the above two results require only required the drift  $b$ to be bounded and Borel measurable. 
Assuming in addition $b$ is nondecreasing, the next two results state that \textcolor{black}{Lemmas \ref{lem:PseudoMetric1} and} \ref{lem:PseudoMetric2} can be extended to any \textcolor{black}{$x,y\in\mathbf{Q}$} (not only dyadic). The proof of Lemma \ref{lem:PseudoMetric3} is omitted since it is similar to that of Lemma \ref{lem:PseudoMetric1a}.
\textcolor{black}{
	\begin{lemma}\label{lem:PseudoMetric1a}
		Suppose $b$, $\Omega_1$ and $C_1$ are as in Lemma \ref{lem:PseudoMetric1}. Suppose in addition that $b$ is   componentwise nondecreasing. Then for all $\omega\in\Omega_1$,
		\begin{align*}
			|\varrho_{nk\kp}(x,y)(\omega)|\leq C_1(\omega)2^{-n}\Big[\sqrt{n}+\Big(\log^+\frac{1}{|x-y|}\Big)^{1/2}\Big]|x-y|\,\text{ on }\Omega_{1}
		\end{align*}
		for all $x,\,y\in\mathbf{Q}$ and all choices of integers $n,\,k,\,\kp$ with $n\geq1$, $0\leq k,\kp\leq 2^n-1$.
	\end{lemma}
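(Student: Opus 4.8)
The plan is to deduce the general estimate from the dyadic estimate of Lemma~\ref{lem:PseudoMetric1} by a monotone squeezing argument; the componentwise monotonicity of $b$ is exactly the ingredient needed to trap $\varrho_{nk\kp}(x,y)$ between its values at nearby dyadic points, so that no fresh probabilistic input is required.

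First I would fix $\omega\in\Omega_1$ and integers $n,k,\kp$ with $n\geq1$ and $0\le k,\kp\le 2^n-1$. For arbitrary $x,y\in\mathbf{Q}$, choose dyadic points $\underline{x},\overline{x},\underline{y},\overline{y}\in\mathbf{Q}$ with $\underline{x}\le x\le\overline{x}$ and $\underline{y}\le y\le\overline{y}$, all inequalities understood componentwise. Since $b(s,t,\cdot)$ is componentwise nondecreasing, one has, pointwise in $(s,t)$,
\begin{align*}
	b(s,t,W_{s,t}+\underline{x})-b(s,t,W_{s,t}+\overline{y}) &\le b(s,t,W_{s,t}+x)-b(s,t,W_{s,t}+y)\\
	&\le b(s,t,W_{s,t}+\overline{x})-b(s,t,W_{s,t}+\underline{y}),
\end{align*}
and integrating over $I_{nk}\times I_{n\kp}$ (a set of positive measure, so inequalities are preserved) yields the squeezing
$$
\varrho_{nk\kp}(\underline{x},\overline{y})\le\varrho_{nk\kp}(x,y)\le\varrho_{nk\kp}(\overline{x},\underline{y}).
$$

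Next I would apply Lemma~\ref{lem:PseudoMetric1} to the two \emph{dyadic} endpoints, bounding $|\varrho_{nk\kp}(\overline{x},\underline{y})|$ and $|\varrho_{nk\kp}(\underline{x},\overline{y})|$ by $C_1(\omega)2^{-n}g(\cdot)$, where $g(r)=\sqrt{n}\,r+r\big(\log^+\tfrac1r\big)^{1/2}$ is evaluated at $|\overline{x}-\underline{y}|$ and $|\underline{x}-\overline{y}|$ respectively. Because $1$ and $-1$ are themselves dyadic, for each $m$ one may take $\overline{x}^{(m)},\underline{x}^{(m)}$ (resp.\ $\overline{y}^{(m)},\underline{y}^{(m)}$) to be the nearest points of $2^{-m}\Z\cap[-1,1]$ lying above and below $x$ (resp.\ $y$) in each coordinate; these stay inside $\mathbf{Q}$ and converge componentwise to $x$ and $y$ as $m\to\infty$, so both $|\overline{x}^{(m)}-\underline{y}^{(m)}|$ and $|\underline{x}^{(m)}-\overline{y}^{(m)}|$ tend to $|x-y|$. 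Assuming $x\neq y$, the function $g$ is continuous at $|x-y|>0$, so passing to the limit in the squeezing gives
$$
|\varrho_{nk\kp}(x,y)(\omega)|\le C_1(\omega)2^{-n}\Big[\sqrt{n}+\Big(\log^+\tfrac{1}{|x-y|}\Big)^{1/2}\Big]|x-y|,
$$
with the \emph{same} random constant $C_1(\omega)$ as in Lemma~\ref{lem:PseudoMetric1}. The case $x=y$ is immediate since then $\varrho_{nk\kp}(x,y)=0$.

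The only point that really needs care is performing the squeezing by the same monotonicity in all $d$ coordinates simultaneously while keeping the approximating dyadics inside $\mathbf{Q}=[-1,1]^d$; the latter is guaranteed precisely because the endpoints $\pm1$ are dyadic, so the nearest-dyadic-from-above/below construction never escapes $[-1,1]$. Everything else is a routine limit from the dyadic estimate, and since all the approximating points depend only on $x,y,m$ and the estimate always holds on $\Omega_1$, no enlargement of the exceptional set is incurred.
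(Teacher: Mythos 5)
Your proof is correct and follows essentially the same route as the paper's: approximate $x$ and $y$ by dyadic points from above/below, use the componentwise monotonicity of $b$ to bound $\varrho_{nk\kp}(x,y)$ by its values at dyadic pairs, apply Lemma \ref{lem:PseudoMetric1} there, and pass to the limit. The only cosmetic difference is that the paper reduces to the case $\varrho_{nk\kp}(x,y)(\omega)>0$ and bounds from one side, whereas you perform a two-sided squeeze; both are the same idea.
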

	\begin{proof}
		Fix $\omega\in\Omega_1$, $x,y\in\mathbf{Q}$, $n\geq1$ and $0\leq k,\kp\leq 2^n-1$. Suppose without loss of generality that $\varrho_{nk\kp}(x,y)(\omega)>0$. For every $i\in\{1,\ldots,d\}$ and $\ell\in\N$, define   $y^-_{i,\ell}=2^{-\ell}[2^{\ell}y_i]$, $x^+_{i,\ell}=1-2^{-\ell}[2^{\ell}(1-x_i)]$,   $y^-_{\ell}=(y^-_{1,\ell},\ldots,y^-_{d,\ell})$ and $x^+_{\ell}=(x^+_{1,\ell},\ldots,x^+_{d,\ell})$. Observe that $x^+_{\ell}$ (respectively $y^-_{\ell}$) is a componentwise non-increasing (respectively non-decreasing) sequence of dyadic vectors that converges to $x$ (respectively $y$). Hence, as $b(s,t,W_{s,t}(\omega)+x)\leq b(s,t,W_{s,t}(\omega)+x^+_{\ell})$ and $b(s,t,W_{s,t}(\omega)+y^-_{\ell})\leq b(s,t,W_{s,t}(\omega)+y)$, it follows from Lemma \ref{lem:PseudoMetric1} that
		\begin{align*}
			|\varrho_{nk\kp}(x,y)(\omega)|&=\varrho_{nk\kp}(x,y)(\omega)=\int_{I_{n\kp}}\int_{I_{nk}}\{b(s,t,W_{s,t}(\omega)+x) -b(s,t,W_{s,t}(\omega)+y)\}\,\mathrm{d}t\mathrm{d}s\\ \leq&\int_{I_{n\kp}}\int_{I_{nk}}\{b(s,t,W_{s,t}(\omega)+x^+_{\ell}) -b(s,t,W_{s,t}(\omega)+y^-_{\ell})\}\,\mathrm{d}t\mathrm{d}s\\ \leq&C_1(\omega)2^{-n}\Big[\sqrt{n}+\Big(\log^+\frac{1}{|x^+_{\ell}-y^-_{\ell}|}\Big)^{1/2}\Big]|x^+_{\ell}-y^-_{\ell}|.
		\end{align*}
		Then, letting $\ell$ tends to $\infty$, we obtain the result.
\end{proof}}
\textcolor{black}{
	\begin{lemma}\label{lem:PseudoMetric3}
		Suppose $b$, $\Omega_2$ and $C_2$ are as in Lemma \ref{lem:PseudoMetric2}. Suppose in addition that $b$ is   componentwise nondecreasing. Then for all $\omega\in\Omega_2$
		\begin{align*} 
			\left|\varrho_{nk\kp}(0,x)(\omega)\right|\leq C_{2}(\omega)\sqrt{n}2^{-n}\Big(|x|+2^{-4^{n}}\Big)\,\text{ on }\Omega_{2}
		\end{align*}
		for all $x\in\mathbf{Q}$ and all choices of integers $n,\,k,\,\kp$ with $n\geq1$, $0\leq k,\kp\leq 2^n-1$.
	\end{lemma}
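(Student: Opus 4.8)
The plan is to follow the proof of Lemma~\ref{lem:PseudoMetric1a} almost verbatim, the only changes being that the two-parameter approximation of $(x,y)$ is replaced by a sandwiching of the single vector $x$ between dyadic vectors, and that the dyadic estimate invoked at the end is Lemma~\ref{lem:PseudoMetric2} rather than Lemma~\ref{lem:PseudoMetric1}. First I would fix $\omega\in\Omega_2$, $x\in\mathbf{Q}$, $n\geq1$ and integers $0\leq k,\kp\leq 2^n-1$. For each $i\in\{1,\ldots,d\}$ and $\ell\in\N$, set $x^-_{i,\ell}=2^{-\ell}[2^{\ell}x_i]$ and $x^+_{i,\ell}=1-2^{-\ell}[2^{\ell}(1-x_i)]$, and write $x^\pm_\ell=(x^\pm_{1,\ell},\ldots,x^\pm_{d,\ell})$. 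Since $x\in[-1,1]^d$, the inequalities $x^-_{i,\ell}\leq x_i$ and $x^+_{i,\ell}\geq x_i$ show that $x^-_\ell$ and $x^+_\ell$ are dyadic vectors still lying in $\mathbf{Q}$; moreover $x^-_\ell$ increases componentwise to $x$ and $x^+_\ell$ decreases componentwise to $x$, so that $x^-_\ell\leq x\leq x^+_\ell$ componentwise for every $\ell$.

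The key step is to use the componentwise monotonicity of $b$ to bracket $\varrho_{nk\kp}(0,x)(\omega)$ by its values at the two dyadic endpoints. Since $y\mapsto b(s,t,W_{s,t}(\omega)+y)$ is componentwise nondecreasing, $x^-_\ell\leq x\leq x^+_\ell$ gives, for every $(s,t)\in I_{nk}\times I_{n\kp}$,
\[
b(s,t,W_{s,t}(\omega)+x^-_\ell)\leq b(s,t,W_{s,t}(\omega)+x)\leq b(s,t,W_{s,t}(\omega)+x^+_\ell).
\]
Subtracting these from $b(s,t,W_{s,t}(\omega))$ and integrating over $I_{nk}\times I_{n\kp}$ yields
\[
\varrho_{nk\kp}(0,x^+_\ell)(\omega)\leq\varrho_{nk\kp}(0,x)(\omega)\leq\varrho_{nk\kp}(0,x^-_\ell)(\omega).
\]

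It then remains to apply Lemma~\ref{lem:PseudoMetric2} to the dyadic vectors $x^-_\ell$ and $x^+_\ell$, which bounds the two outer terms and gives $\varrho_{nk\kp}(0,x)(\omega)\leq C_2(\omega)\sqrt{n}2^{-n}(|x^-_\ell|+2^{-4^{n}})$ together with $\varrho_{nk\kp}(0,x)(\omega)\geq-C_2(\omega)\sqrt{n}2^{-n}(|x^+_\ell|+2^{-4^{n}})$, with the same random constant $C_2(\omega)$ since it is independent of the spatial variable. Letting $\ell\to\infty$ and using $|x^\pm_\ell|\to|x|$ (continuity of the Euclidean norm, the term $2^{-4^{n}}$ being independent of $\ell$) collapses both inequalities into $|\varrho_{nk\kp}(0,x)(\omega)|\leq C_2(\omega)\sqrt{n}2^{-n}(|x|+2^{-4^{n}})$, which is the desired bound. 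I do not anticipate any real difficulty: the only points to watch are that the truncations stay inside $\mathbf{Q}=[-1,1]^d$ and that one only ever uses the one-sided sandwiching inequalities together with the convergence of their right-hand sides, so that no continuity of $x\mapsto\varrho_{nk\kp}(0,x)$ itself is required.
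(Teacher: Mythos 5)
Your proof is correct and is precisely the argument the paper has in mind: the paper omits the proof of this lemma, stating it is "similar to that of Lemma \ref{lem:PseudoMetric1a}", and your dyadic truncations $x^{\pm}_{\ell}$, the monotone sandwiching $\varrho_{nk\kp}(0,x^+_\ell)\leq\varrho_{nk\kp}(0,x)\leq\varrho_{nk\kp}(0,x^-_\ell)$, the application of the dyadic Lemma \ref{lem:PseudoMetric2}, and the passage $\ell\to\infty$ are exactly that adaptation. The two-sided bracket (rather than the sign-based WLOG used in Lemma \ref{lem:PseudoMetric1a}) is an immaterial variation, and your observation that no continuity of $x\mapsto\varrho_{nk\kp}(0,x)$ is needed is the right point to flag.
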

	}

\subsection{Main results and proofs}\label{sectmasinres}

In this section, we prove the path-by-path uniqueness of the solution to  \eqref{Eqmainhpde5}. We use this result to derive the existence and uniqueness of a strong solution to \eqref{Eqmainhpde5}. We assume the following conditions on the drift. We endow $\R^d$ with the partial order ``$\preceq$"  defined by
$$
x\preceq y\text{ when }x_i\leq y_i\text{ for all }i\in\{1,\ldots,d\}.
$$
\begin{hyp}\label{hyp1}\leavevmode
	\begin{enumerate}
		\item $b:\,\R_+^2\times\mathbb{R}^d\rightarrow \mathbb{R}^d$ is Borel measurable and admits the decomposition $b=\hat{b}-\check{b}$, where $\hat{b}(s,t,\cdot)$ and $\check{b}(s,t,\cdot)$ are   componentwise nondecreasing functions, that is each component $\hat{b}_i$ and $\check{b}_i$, $1\leq i\leq d$ is componentwise nondecreasing. Precisely, for every $x,y\in\R^d$,
		$$ x\preceq y\Rightarrow \hat{b}_i(s,t,x)\leq\hat{b}_i(s,t,y)\text{ and }\check{b}_i(s,t,x)\leq\check{b}_i(s,t,y).$$ 
		\item $b$ is of linear growth uniformly on $(s,t)$; precisely, there exists a positive constant $M$ such that
		\begin{align*}
			b(s,t,x)\leq M(1+|x|),\quad\forall\,(s,t,x)\in\R_+^2\times\R^d.
		\end{align*}
		
	\end{enumerate}
\end{hyp}

The main results of this section are the following : 

\begin{theorem}\label{mainres1}	Suppose $b$ satisfies Hypothesis \ref{hyp1}. Then the SDE \eqref{Eqmainhpde5} admits a unique strong solution. 
\end{theorem}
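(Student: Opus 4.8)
The plan is to apply the Yamada--Watanabe principle for SDEs driven by a Brownian sheet from \cite{NuYe89}: once both weak existence and pathwise uniqueness hold for \eqref{Eqmainhpde5}, the principle produces a unique strong solution. Accordingly I would split the argument into a weak-existence step and a uniqueness step, the latter being the substantial one. For weak existence I would start from a Brownian sheet $W$ under a reference measure and perform a Girsanov change of measure that removes the drift, i.e.\ construct a probability $\widetilde{\Pb}$ under which $\xi+W$ solves \eqref{Eqmainhpde5} driven by a new sheet. The linear growth bound in Hypothesis \ref{hyp1}(2) is precisely what secures the two-parameter Novikov-type integrability needed for the associated exponential to be a genuine density. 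Alternatively, one may truncate $b$ to a bounded drift $b_N$, obtain weak solutions $X^N$ from the bounded-drift theory, and pass to the limit using the uniform moment bounds furnished by linear growth to get tightness.

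For uniqueness I would establish the stronger path-by-path uniqueness, which by the reduction recalled after \eqref{Eqmainhpde5} amounts to showing that, almost surely, the only $u\in\mathcal{V}^1_0$ solving \eqref{Eqmainhpde6} is $u\equiv0$; this is the content of Theorem \ref{maintheuniq1}. The mechanism is a two-parameter Gronwall argument (Lemma \ref{lem:GronwallSheet}): writing the increment $b(s_1,t_1,W+u)-b(s_1,t_1,W)$ and using the decomposition $b=\hat b-\check b$, one controls it through the pseudometric estimates of Lemmas \ref{lem:PseudoMetric1a} and \ref{lem:PseudoMetric3}. The key point is that, because $\hat b$ and $\check b$ are componentwise nondecreasing, these estimates hold for all $x,y\in\mathbf{Q}$ and not merely at dyadic points, so they apply directly to the values taken by $u$. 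To accommodate the linear growth I would first reduce to the bounded case: any candidate $u$ is continuous on $[0,T]^2$ and so is $W$, hence $W+u$ stays in a (random) compact set, and $b$ may be replaced there by a bounded drift coinciding with it without altering \eqref{Eqmainhpde6}; after normalising $\|b\|_\infty\le1$, the estimates above feed into Lemma \ref{lem:GronwallSheet} and force $u\equiv0$.

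Finally, path-by-path uniqueness implies pathwise uniqueness (as noted, via \cite{CG16}), so both hypotheses of the Yamada--Watanabe principle are in force and \eqref{Eqmainhpde5} admits a unique strong solution. The main obstacle I anticipate lies in the uniqueness step, specifically the closing of the two-parameter Gronwall estimate: unlike the classical one-parameter case, the iterated plane integrals together with the logarithmic correction $(\log^+\tfrac{1}{|x-y|})^{1/2}$ appearing in Lemma \ref{lem:PseudoMetric1a} must be arranged so that the resulting nonlinear integral inequality still forces $u\equiv0$, and both the behaviour of $u$ near $\partial\Gamma_0$ and the passage from bounded to linear-growth drift are the delicate points. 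By comparison the weak-existence step is routine once the Novikov-type integrability (or tightness) has been secured.
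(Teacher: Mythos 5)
Your proposal follows essentially the same route as the paper: weak existence plus path-by-path uniqueness (Theorem \ref{maintheuniq1}, obtained through the Gronwall-type Lemma \ref{lem:GronwallSheet} fed by the monotonicity-extended estimates of Lemmas \ref{lem:PseudoMetric1a} and \ref{lem:PseudoMetric3}), which implies pathwise uniqueness, and then the Yamada--Watanabe principle for Brownian-sheet SDEs of \cite{NuYe89}. The paper's own proof is in fact terser---it simply asserts weak existence from the hypotheses and cites \cite{BFGM14} for the implication from path-by-path to pathwise uniqueness---so your added detail on Girsanov/truncation for weak existence and on localising from bounded to linear-growth drift spells out steps the paper delegates to \cite{BDM21b}.
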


The above result constitutes an extension to those in \cite{BDM21b, NuTi97} by allowing the drift $b$ to be the difference of two monotone functions.  It is proved by using the Yamada-Watanabe principle. However, instead of showing the pathwise uniquess we show the following path-by-path uniqueness.
\begin{theorem}\label{maintheuniq1}
	Suppose $b$ satisfies Hypothesis \ref{hyp1}. Then for almost every Brownian sheet path $W$, \textcolor{black}{there exists} a unique continuous function $X :[0,1]^2\rightarrow \mathbb{R}^d$ satisfying \eqref{Eqmainhpde5}.
\end{theorem}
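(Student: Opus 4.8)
The plan is to establish the \emph{uniqueness} part through the reformulation recalled in the introduction: by \cite[Section 1]{BDM21b}, path-by-path uniqueness for \eqref{Eqmainhpde5} on $[0,1]^2$ is equivalent to the statement that, on a set of full $\Pb$-measure, the auxiliary equation \eqref{Eqmainhpde6} has no nontrivial solution $u\in\mathcal{V}^1_0$. I would work on the intersection of $\Omega_1$ and $\Omega_2$ from Lemmas \ref{lem:PseudoMetric1}--\ref{lem:PseudoMetric3}, further intersected with the event that $W(\omega)$ is continuous, and argue pathwise on this full-measure set. Existence of at least one continuous solution for almost every path is inherited from the weak existence for \eqref{Eqmainhpde5} (Girsanov transform for a bounded truncation, together with the approximation permitted by the linear growth), so the entire burden of the theorem falls on showing $u\equiv0$.

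First I would strip off the linear growth by an a priori bound. Any continuous solution $u$ of \eqref{Eqmainhpde6} obeys, using $b=\hat{b}-\check{b}$ and the growth bound of Hypothesis \ref{hyp1}(2),
\begin{align*}
	|u(s,t)|\le\int_0^s\int_0^t\big|b(s_1,t_1,W_{s_1,t_1}+u)-b(s_1,t_1,W_{s_1,t_1})\big|\,\mathrm{d}t_1\mathrm{d}s_1\le C(\omega)+M\int_0^s\int_0^t|u(s_1,t_1)|\,\mathrm{d}t_1\mathrm{d}s_1,
\end{align*}
with $C(\omega)$ depending only on $M$ and $\Vert W(\omega)\Vert_\infty$. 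A two-parameter Gronwall inequality then gives a bound $\Vert u\Vert_\infty\le R(\omega)$ that does not depend on the particular solution. Putting $\rho=R(\omega)+\Vert W(\omega)\Vert_\infty$ and replacing each monotone part by its composition with the componentwise projection onto $[-\rho,\rho]^d$, I obtain \emph{bounded}, componentwise nondecreasing $\hat{b}_\rho,\check{b}_\rho$ (bounded because a componentwise monotone function of spatial linear growth is bounded on the fixed box, uniformly in $(s,t)$) that coincide with $\hat{b},\check{b}$ on the range actually visited by $W+u$ and by $W$. Hence every solution of \eqref{Eqmainhpde6} solves the same equation with the bounded drift $b_\rho=\hat{b}_\rho-\check{b}_\rho$, and it suffices to treat bounded drifts, to which the quadrature lemmas apply after the usual normalisation $\Vert b\Vert_\infty\le1$.

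For bounded drifts I would run a Davie-type argument on the dyadic mesh $I_{nk}\times I_{nk'}$ of $[0,1]^2$. Writing $u(s,t)$ as the sum over cells contained in $[0,s]\times[0,t]$ of $\int_{I_{nk'}}\int_{I_{nk}}\{b(\cdot,W+u)-b(\cdot,W)\}$, I would freeze $u$ at the lower corner of each cell, so that the principal part becomes $\sum\varrho_{nkk'}(u_{\mathrm{cell}},0)$, estimated through Lemma \ref{lem:PseudoMetric3} \emph{applied separately to $\hat{b}_\rho$ and $\check{b}_\rho$} (which is why the monotone refinements, valid for all $x$ and not only dyadic $x$, are indispensable). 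Since $b$ is merely measurable, the discretisation error cannot be controlled by any modulus of continuity of $b$; instead monotonicity lets me sandwich $b(\cdot,W+u(\cdot))$ between its values at the componentwise infimum and supremum of $u$ over each cell and bound the gap by Lemma \ref{lem:PseudoMetric1a} with $|x-y|$ equal to the oscillation of $u$, which is small by uniform continuity. Assembling these cellwise bounds leads, schematically, to an Osgood-type inequality
\begin{align*}
	\Vert u\Vert_{s,t}\le C(\omega)\int_0^s\int_0^t\Vert u\Vert_{s_1,t_1}\Big(1+\sqrt{\log^+\tfrac{1}{\Vert u\Vert_{s_1,t_1}}}\Big)\,\mathrm{d}t_1\mathrm{d}s_1,\qquad\Vert u\Vert_{s,t}:=\sup_{(s',t')\preceq(s,t)}|u(s',t')|,
\end{align*}
and since the modulus $r\mapsto r(1+\sqrt{\log^+(1/r)})$ is non-integrable at the origin, the Gronwall-type Lemma \ref{lem:GronwallSheet} forces $u\equiv0$, which is the desired path-by-path uniqueness.

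The delicate point is the assembly in the previous paragraph. The single-cell estimates of Lemmas \ref{lem:PseudoMetric1a} and \ref{lem:PseudoMetric3} carry a factor $\sqrt n$ (and a term $2^{-4^{n}}$), and a naive summation over the $O(2^{2n})$ cells would let $\sqrt n\,2^{n}$ blow up; the mesh scale $n$ must therefore be tied to the size of $u$ so that the $\sqrt n$ contribution is absorbed into the dominant $\sqrt{\log^+(1/\Vert u\Vert)}$ (the regime where $u$ is already extremely small), while $2^{-4^{n}}$ remains negligible after summation. Combined with the two-parameter nature of the Gronwall step, this balancing is where essentially all the work lies; the reduction to bounded drift and the final invocation of Lemma \ref{lem:GronwallSheet} are then routine.
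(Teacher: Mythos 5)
Your skeleton (reformulation via \eqref{Eqmainhpde6}, truncation from linear growth to bounded drift, monotone sandwiching so that the quadrature estimates apply at non-dyadic points) is the paper's, but the core of your uniqueness argument --- assembling the cell estimates into a continuous Osgood-type integral inequality --- is a step that would fail, and it is not what the paper does. The estimates of Lemmas \ref{lem:PseudoMetric1a} and \ref{lem:PseudoMetric3} scale like the \emph{square root} of the cell area: a cell $I_{nk}\times I_{n\kp}$ has area $4^{-n}$ but contributes $C\sqrt{n}\,2^{-n}\big(\sup_{\mathrm{cell}}|u|+2^{-4^{n}}\big)$. Summing over the $\approx st\,4^{n}$ cells of $[0,s]\times[0,t]$ therefore yields
\begin{align*}
\sum_{\mathrm{cells}}C\sqrt{n}\,2^{-n}\Big(\sup_{\mathrm{cell}}|u|+2^{-4^{n}}\Big)\;\approx\;C\sqrt{n}\,2^{n}\int_0^s\int_0^t|u(s_1,t_1)|\,\mathrm{d}t_1\mathrm{d}s_1\;+\;C\,st\,\sqrt{n}\,2^{n}2^{-4^{n}},
\end{align*}
with coefficient $\sqrt{n}\,2^{n}$ in front of the integral, not a constant. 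Your proposed balancing (``tie $n$ to $\Vert u\Vert$ so that $\sqrt{n}$ is absorbed into $\sqrt{\log^{+}(1/\Vert u\Vert)}$'') removes only the factor $\sqrt{n}$; the factor $2^{n}$ survives, and forcing $2^{n}\lesssim 1+\sqrt{\log^{+}(1/\Vert u\Vert)}$ caps $n$ at order $\log\log(1/\Vert u\Vert)$, at which scale the remaining errors do not vanish: the oscillation route through Lemma \ref{lem:PseudoMetric1a} (per-cell oscillation $\lesssim 2^{-n}$ since $b$ is bounded) produces a total error of order $st\sqrt{n}$, which grows with $n$ rather than tending to zero. No choice of $n$ produces the inequality $\Vert u\Vert_{s,t}\le C\int_0^s\int_0^t\Vert u\Vert\big(1+\sqrt{\log^{+}(1/\Vert u\Vert)}\big)$. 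This square-root-of-area mismatch is precisely why Davie-type proofs, in one or two parameters, never pass through a continuous integral inequality.

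The paper's mechanism, Lemma \ref{lem:GronwallSheet}, is of a different nature, and your final appeal to it is a category error: its hypotheses are that $u$ solves the integral \emph{equation} \eqref{eq:DavieSheetGronwall} with boundary data bounded by $\beta(n)\in\left[2^{-4^{3n/4}},2^{-4^{2n/3}}\right]$, and its conclusion is the discrete bound \eqref{eq:GronwallSheetEst}; it does not accept an Osgood inequality as input (and had you obtained your inequality, the conclusion $u\equiv0$ would follow by monotonizing $u$ in each variable and applying the one-parameter Osgood lemma, not Lemma \ref{lem:GronwallSheet}). What the lemma actually does is a multiplicative induction over the grid: by monotonicity one replaces $u$ by $u^{+}\le\ovu_n$ and $-u^{-}\ge-\und_n$ inside $\hat{b},\check{b}$, applies \eqref{eq:coPseudoMetricc31}--\eqref{eq:coPseudoMetricc32} with these frozen cell envelopes (only the $\varrho(0,x)$ estimates of Lemma \ref{lem:PseudoMetric3} are needed; the $\sqrt{\log^{+}}$ refinement plays no role here), and obtains a three-neighbour recursion for $\wdu_{n,i}$ whose iteration yields the factor $3^{k+\kp-1}\left(1+3C_2(\omega)\sqrt{dn}2^{-n}\right)^{k+\kp}\le 2^{2^{n+2}}$. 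This doubly exponential loss is \emph{tolerated}, not avoided: it is beaten because the boundary data of $u$ is exactly zero, hence bounded by the doubly exponentially small $\beta(n)\le2^{-4^{2n/3}}$, so that $\Vert u\Vert\le 2^{2^{n+2}}\beta(n)\to0$. This ``zero boundary data versus doubly exponential loss'' balance is the missing idea in your proposal. A smaller point: your truncation step treats $\hat{b},\check{b}$ as functions of linear growth, but Hypothesis \ref{hyp1} imposes linear growth only on the difference $b=\hat{b}-\check{b}$; to reach Theorem \ref{theo:DavieSheetMonotone}, which requires $\hat{b}_i,\check{b}_i$ individually bounded, some normalisation of the decomposition has to be assumed or arranged.
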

\textcolor{black}{The proof of Theorem \ref{maintheuniq1} is omitted since it follows the same lines as that of \cite[Theorem 3.2]{BDM21b}} provided that the next result holds.

\begin{theorem}\label{maintheuniq2}
	Suppose $b$ is as in Theorem \ref{maintheuniq1}. Suppose in addition that $b$ is uniformly bounded. Then for almost every Brownian sheet path $W$, \textcolor{black}{there exists} a unique continuous function $X :[0,1]^2\rightarrow \mathbb{R}^d$ satisfying \eqref{Eqmainhpde5}.
\end{theorem}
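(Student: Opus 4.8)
The plan is to prove the uniqueness half via the reduction recalled after \eqref{Eqmainhpde6}: path-by-path uniqueness for \eqref{Eqmainhpde5} is equivalent to showing that, for almost every path $W$, the only $u\in\mathcal{V}^1_0$ solving
$$u(s,t)=\int_0^s\int_0^t\{b(s_1,t_1,W_{s_1,t_1}+u(s_1,t_1))-b(s_1,t_1,W_{s_1,t_1})\}\,\mathrm{d}s_1\mathrm{d}t_1,\quad(s,t)\in[0,1]^2,$$
is $u\equiv0$. Existence of at least one continuous solution for almost every $W$ is not the real issue: since $b$ is bounded and Borel measurable, a weak solution is produced by a Girsanov change of measure whose law is equivalent to that of the sheet, so that the Yamada--Watanabe principle (as used for Theorem \ref{mainres1}) combines weak existence with path-by-path uniqueness to yield a strong solution, and hence a pathwise solution for almost every $W$. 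I would therefore concentrate entirely on ruling out nontrivial $u$.

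Since $\varrho_{nk\kp}$ is linear in $b$, the estimates extend to any bounded $b$ with the constants scaled by $\Vert b\Vert_\infty$, so I may assume $|b|\le1$ and fix $\omega$ in the full-measure set $\Omega_1\cap\Omega_2$ of Lemmas \ref{lem:PseudoMetric1} and \ref{lem:PseudoMetric2}. Because $b=\hat b-\check b$ is, by Hypothesis \ref{hyp1}, the difference of two componentwise nondecreasing functions, the monotone extensions \ref{lem:PseudoMetric1a} and \ref{lem:PseudoMetric3} hold for \emph{all} $x,y\in\mathbf{Q}$, not only dyadic ones; this is exactly the point at which the monotone decomposition is used, and I would invoke it componentwise on $\hat b$ and $\check b$ before recombining. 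Given a solution $u$, I would partition $[0,1]^2$ into the $2^n\times2^n$ dyadic cells $I_{nk}\times I_{n\kp}$ and write the rectangular increment of $u$ over each cell as the double integral of $b(\cdot,W+u)-b(\cdot,W)$. Freezing the argument $u(s_1,t_1)$ at the corner value $u(k2^{-n},\kp2^{-n})$, the frozen contribution is precisely $\varrho_{nk\kp}(0,u(\mathrm{corner}))$, bounded by Lemma \ref{lem:PseudoMetric3} by $C_2\sqrt n\,2^{-n}(|u(\mathrm{corner})|+2^{-4^{n}})$, while gaps between corner values on neighbouring cells are controlled by Lemma \ref{lem:PseudoMetric1a} through the weight $2^{-n}[\sqrt n+(\log^+1/|x-y|)^{1/2}]|x-y|$.

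The decisive feature to exploit is that $u$ is Lipschitz: from its own equation the mixed increments of $u$ are integrals of a function bounded by $2$, so the one-sided increments are $O(2^{-n})$ on cells of side $2^{-n}$, and the gaps $|x-y|$ fed into Lemma \ref{lem:PseudoMetric1a} are correspondingly small. Since $b$ is merely measurable, the freezing error cannot be absorbed through continuity of $b$; it must instead be controlled by the pseudometric estimate itself, which plays the role of a Lipschitz bound for the \emph{averaged} drift. The per-cell bounds then have to be assembled — across both rectangular directions and across scales — into the integral inequality hypothesised by the Gronwall-type Lemma \ref{lem:GronwallSheet}; applying that lemma, the gains $\sqrt n\,2^{-n}$ together with the superexponentially small $2^{-4^{n}}$ force $\sup|u|$ to vanish, giving $u\equiv0$. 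Intersecting with the full-measure existence set then completes the proof, along the lines of \cite[Theorem 3.2]{BDM21b}.

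The main obstacle is that the per-cell estimates do not close under naive summation: there are $\sim2^{2n}$ cells in $[0,s]\times[0,t]$, and term-by-term addition of the Lemma \ref{lem:PseudoMetric3} bound would diverge, so the estimates must be combined through a telescoping comparison of consecutive dyadic scales rather than summed independently, with the logarithmic factor $(\log^+1/|x-y|)^{1/2}$ in Lemma \ref{lem:PseudoMetric1a} weighed against the shrinking gap $|x-y|$ supplied by the Lipschitz control of $u$. Organising this two-parameter, multiscale cancellation so that it feeds cleanly into Lemma \ref{lem:GronwallSheet} is the heart of the matter, the extra bookkeeping being that each value $u(s,t)$ depends on the whole sub-rectangle $[0,s]\times[0,t]$, which is precisely what the sheet Gronwall lemma is designed to absorb.
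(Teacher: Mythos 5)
Your skeleton agrees with the paper's: uniqueness is reduced to showing $u\equiv0$ for \eqref{Eqmainhpde6}, existence comes from Girsanov weak existence plus the Yamada--Watanabe principle, one normalises $\|b\|_\infty\le1$, and the componentwise monotonicity of $\hat b,\check b$ is what upgrades Lemmas \ref{lem:PseudoMetric1} and \ref{lem:PseudoMetric2} to all of $\mathbf{Q}$ (Lemmas \ref{lem:PseudoMetric1a} and \ref{lem:PseudoMetric3}). The gap is in your per-cell mechanism. Freezing $u$ at the corner of $I_{nk}\times I_{n\kp}$ leaves the error
\begin{align*}
\int_{I_{n\kp}}\int_{I_{nk}}\Big\{b\big(s_1,t_1,W_{s_1,t_1}+u(s_1,t_1)\big)-b\big(s_1,t_1,W_{s_1,t_1}+u(k2^{-n},\kp2^{-n})\big)\Big\}\,\mathrm{d}t_1\mathrm{d}s_1,
\end{align*}
in which the shift $u(s_1,t_1)$ \emph{varies} with the integration variable. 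Lemmas \ref{lem:PseudoMetric1a} and \ref{lem:PseudoMetric3} bound $\varrho_{nk\kp}(x,y)$ only for \emph{fixed} vectors $x,y$; they say nothing about integrals with a moving shift, and since $b$ is merely measurable, the Lipschitz property of $u$ cannot convert one into the other (that is what your own remark about not being able to use continuity of $b$ amounts to, but it applies equally to your freezing error). The Davie-style multiscale telescoping you invoke to repair this is exactly the step you leave unexecuted (``the heart of the matter''), so the proposal stops short of the actual difficulty; moreover your phrase ``gaps between corner values on neighbouring cells are controlled by Lemma \ref{lem:PseudoMetric1a}'' conflates two different objects, since those gaps are controlled by the Lipschitz bound on $u$, not by an averaging estimate.

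The paper's proof of Lemma \ref{lem:GronwallSheet} avoids any freezing error by using monotonicity a \emph{second} time, pointwise inside the integral: on each cell $u\preceq u^+\preceq\ovu_n(k+1,\kp+1)$ and $u\succeq-u^-\succeq-\und_n(k+1,\kp+1)$, so componentwise nondecrease of $\hat b_i$ and $\check b_i$ gives the one-sided domination of the cell integral by
$\hat{\varrho}^{(i)}_{nk\kp}\big(0,\ovu_n(k+1,\kp+1)\big)-\check{\varrho}^{(i)}_{nk\kp}\big(0,-\und_n(k+1,\kp+1)\big)$,
a difference of averaged terms at \emph{constant} shifts to which Lemma \ref{lem:PseudoMetric3} applies exactly, with no remainder at all. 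A two-parameter induction on $(k,\kp)$ then yields \eqref{eq:GronwallSheetEst}, and choosing $\beta(n)\le2^{-4^{2n/3}}$ makes the resulting bound $2^{2^{n+2}}\beta(n)$ vanish as $n\to\infty$. Note also that you misread the Gronwall lemma's role: its hypotheses are the integral \emph{equation} \eqref{eq:DavieSheetGronwall} together with the boundary bound \eqref{eq:GronwallInitialV}, both of which a solution of \eqref{eq:DavieSheetInt1D} satisfies trivially (zero boundary data), so there is no derived ``integral inequality'' to assemble and feed into it. Either you cite Lemma \ref{lem:GronwallSheet} as given --- in which case Theorem \ref{theo:DavieSheetMonotone}, hence Theorem \ref{maintheuniq2}, follows in a few lines and your cell analysis is redundant --- or you are implicitly reproving it, in which case the monotone domination by cell extrema is the missing idea that your corner-freezing scheme does not replace.
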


\begin{proof}[Proof of Theorem \ref{mainres1}]
	It follows from the  conditions of the theorem that, \eqref{Eqmainhpde5} has a weak solution. In addition, since path-by-path uniqueness implies pathwise uniqueness (see \cite[Page 9, Section 1.8.4]{BFGM14}), the result follows from the Yamada-Watanabe type principle for SDEs driven by Brownian sheets (see e.g. Nualart and Yeh \cite{NuYe89}).
\end{proof}

\begin{corollary}
	Suppose that $b$ is as in Theorem \ref{maintheuniq1}. Then for almost every Brownian sheet path $W$, \textcolor{black}{there exists} a unique continuous function $X :[0,1]^2\rightarrow \mathbb{R}^d$ satisfying \eqref{Eqmainhpde1}.
\end{corollary}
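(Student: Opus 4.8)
The plan is to obtain the corollary directly from Theorem~\ref{maintheuniq1} by showing that, path by path, the differential formulation \eqref{Eqmainhpde1} on $[0,1]^2$ and the integral formulation \eqref{Eqmainhpde5} have exactly the same continuous solutions. Because the drift $b$ is merely Borel measurable and the forcing $\dot W$ is a white noise, i.e.\ a genuine distribution, equation \eqref{Eqmainhpde1} cannot be interpreted classically; the first thing I would do is record that its only meaningful (weak) interpretation is the one obtained by integrating the mixed second derivative against the indicator of a rectangle. Testing
$\partial^2 X_{s,t}/\partial s\partial t = b(s,t,X_{s,t})+\dot W$
against $\mathbf{1}_{[0,s]\times[0,t]}$ and using the defining relation $W_{s,t}=\dot W([0,s]\times[0,t])$ turns the forcing side into $\int_0^t\int_0^s b(s_1,t_1,X_{s_1,t_1})\,\mathrm ds_1\mathrm dt_1 + W_{s,t}$, which is precisely the content of the right-hand side of \eqref{Eqmainhpde5}.

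For the left-hand side, the two-dimensional fundamental theorem of calculus gives
\[
\int_0^t\int_0^s \frac{\partial^2 X_{s_1,t_1}}{\partial s_1\partial t_1}\,\mathrm ds_1\mathrm dt_1 = X_{s,t}-X_{s,0}-X_{0,t}+X_{0,0},
\]
and the boundary prescription $\partial X=\xi$ identifies the boundary increment $X_{s,0}+X_{0,t}-X_{0,0}$ with the datum recorded by $\xi$ in \eqref{Eqmainhpde5} (recall that $W$ vanishes on $\partial\Gamma$). This produces \eqref{Eqmainhpde5} from \eqref{Eqmainhpde1}. The converse inclusion is immediate: any continuous $X$ satisfying \eqref{Eqmainhpde5} restricts correctly to $\partial\Gamma$ upon setting $s=0$ or $t=0$, and, being the sum of an absolutely continuous double primitive of $b(\cdot,\cdot,X)$ and the Brownian sheet $W$, it satisfies \eqref{Eqmainhpde1} in the same weak sense. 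Hence, for each fixed realisation $W(\omega)$, the solution set of \eqref{Eqmainhpde1} coincides with that of \eqref{Eqmainhpde5}.

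With the pathwise equivalence in hand I would simply invoke Theorem~\ref{maintheuniq1}: under Hypothesis~\ref{hyp1} there is a full $\Pb$-measure set of Brownian sheet paths on which \eqref{Eqmainhpde5} has a unique continuous solution $X:[0,1]^2\to\mathbb{R}^d$. Restricting to that same set and using the identification above, $X$ is also the unique continuous solution of \eqref{Eqmainhpde1}, which is exactly the assertion of the corollary. I expect the only delicate point to be the very first step, namely fixing the precise weak meaning of \eqref{Eqmainhpde1} and checking that the boundary identification $\partial X=\xi$ is compatible with the symbol $\xi$ appearing in the integral equation; this is bookkeeping rather than analysis, since the irregularity of $b$ and the distributional nature of $\dot W$ have already been absorbed into the integral formulation treated in Theorem~\ref{maintheuniq1}. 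Once the two formulations are matched, no new estimates are needed and the statement follows as a genuine corollary.
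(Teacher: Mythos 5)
Your proposal is correct and takes essentially the same route as the paper: the paper states this corollary without any proof, treating \eqref{Eqmainhpde1} as nothing more than the differential rewriting of the integral equation \eqref{Eqmainhpde5} (as announced in the introduction), so that the claim follows at once from Theorem \ref{maintheuniq1}. Your rectangle-testing argument identifying the weak formulation of \eqref{Eqmainhpde1} with \eqref{Eqmainhpde5}, including the observation that $W$ vanishes on the axes so the boundary data match, is precisely the bookkeeping the paper leaves implicit.
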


\begin{corollary}
	Suppose that $b$ is as in Theorem \ref{maintheuniq1}. Then for almost every Brownian sheet path $W$, \textcolor{black}{there exists} a unique continuous function $X :[0,1]^2\rightarrow \mathbb{R}^d$ satisfying the stochastic wave equation \eqref{Eqmainhpde3} when $a$ is the identity matrix.
\end{corollary}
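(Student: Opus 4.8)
The plan is to obtain the statement as a corollary of Theorem~\ref{maintheuniq1} by exploiting the formal $\frac{\pi}{4}$ rotation already described in the introduction, which, when $a$ is the identity matrix, carries the hyperbolic equation \eqref{Eqmainhpde1} into the stochastic wave equation \eqref{Eqmainhpde3}. Concretely, I would introduce the change of variables $Y_{\rho,\theta}=X_{\frac{\theta+\rho}{\sqrt{2}},\frac{\theta-\rho}{\sqrt{2}}}$ together with $\tilde b(\rho,\theta,y)=b(\frac{\theta+\rho}{\sqrt{2}},\frac{\theta-\rho}{\sqrt{2}},y)$ and the boundary identification $\tilde\xi_{\theta,\theta}=\xi_{\sqrt{2}\theta,0}$, $\tilde\xi_{-\theta,\theta}=\xi_{0,\sqrt{2}\theta}$, and verify that $X\mapsto Y$ is a bijection between continuous solutions of the hyperbolic problem on a square and continuous solutions of \eqref{Eqmainhpde3} on the corresponding region of $\tilde\Gamma$. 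The first point to record is that $\tilde b$ still satisfies Hypothesis~\ref{hyp1}: because the rotation acts only on the time--space pair $(s,t)$ and leaves the state variable $y$ untouched, the decomposition of $b$ into two componentwise nondecreasing functions and the spatial linear-growth bound are inherited verbatim by $\tilde b$, and the continuity of $\xi$ on the boundary passes to $\tilde\xi$.

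The second, and main, ingredient is that the driving noise is preserved in law. The image of the $d$-dimensional white noise $\dot W$ on $\Gamma$ under the $\frac{\pi}{4}$ rotation is again a $d$-dimensional white noise $\dot{\tilde W}$ on $\tilde\Gamma$: this is the rotation invariance of white noise, a consequence of the covariance $\E[\dot W^{(i)}(A)\dot W^{(j)}(B)]=\delta_{i,j}|A\cap B|$ being governed by Lebesgue measure, which is invariant under the measure-preserving rotation of the plane. Hence the sheet $\tilde W$ built from $\dot{\tilde W}$ has the law of a Brownian sheet, and the rotation realises a measurable bijection of path space pushing the law of $W$ onto the law of $\tilde W$.

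With these facts the argument is a transfer: Theorem~\ref{maintheuniq1} applied to $\tilde b$ furnishes a full-measure set of Brownian-sheet paths on which the hyperbolic equation has a unique continuous solution, and because the rotation is measure preserving this set corresponds to a full-measure set of realisations of $\tilde W$ on which \eqref{Eqmainhpde3} admits exactly one continuous solution, namely $Y$ constructed from the unique $X$. I expect the delicate point to be precisely this last transfer: one must check that the $\Pb$-null exceptional set produced for the hyperbolic equation maps, under the rotation, to a genuine null set for the wave-equation noise, and that ``unique continuous solution on the square'' matches ``unique continuous solution on $\tilde\Gamma$'' after the change of variables---together with a standard localisation to cover all of $\tilde\Gamma$ by rotated squares. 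Once the rotation invariance of the white-noise law is in place, this bookkeeping is routine and the existence-and-uniqueness assertion for \eqref{Eqmainhpde3} follows.
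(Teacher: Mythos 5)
Your proposal is correct and is essentially the argument the paper intends: the corollary is stated there without proof, as an immediate consequence of Theorem \ref{maintheuniq1} via the formal $\frac{\pi}{4}$ rotation set up in the introduction, which is exactly your transfer. Your explicit checks (preservation of Hypothesis \ref{hyp1} under the rotation of the time variables, rotation invariance of the white-noise law, and the null-set bookkeeping between the two path spaces) simply fill in the details the paper leaves implicit.
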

\subsection{Proof of Theorem \ref{maintheuniq2}} 

In this subsection, we prove Theorem \ref{maintheuniq2}.  As already pointed out in the introduction, this is equivalent to showing that for almost all Brownian sheet path, the unique continuous solution $u$ to \eqref{Eqmainhpde6} is zero. More precisely, Theorem \ref{maintheuniq2} is equivalent to: 

\begin{theorem}\label{theo:DavieSheetMonotone}
	Let $W:=\left(W_{s,t},(s,t)\in[0,1]^2\right)$ be a $d$-dimensional Brownian sheet defined on a filtered probability space $(\Omega,\mathcal{F},\mathbb{F},\Pb)$, where $\mathbb{F}=\{\mathcal{F}_{s,t}\}_{s,t\in[0,1]}$. Let $b:\,[0,1]^2\times\R^d\to\R^d$ be a Borel measurable function such that for every $i\in\{1,\ldots,d\}$,  $b_i(s,t,\cdot)=\hat{b}_i(s,t,\cdot)-\check{b}_i(s,t,\cdot)$, where $\hat{b}_i$, $\check{b}_i$ are bounded and componentwise nondecreasing in $x$ for all $(s,t)$. 
	Then there exists $\Omega_1\subset\Omega$ with $\Pb(\Omega_1)=1$ such that for any $\omega\in\Omega_1$, $u=0$ is the unique continuous solution of the integral equation
	\begin{align}\label{eq:DavieSheetInt1D}
		u(s,t)=\int_0^t\int_0^s\left\{b(s_1,t_1,W_{s_1,t_1}(\omega)+u(s_1,t_1))-b(s_1,t_1,W_{s_1,t_1}(\omega))\right\}\mathrm{d}s_1\,\mathrm{d}t_1,\quad\forall\,(s,t)\in[0,1]^2.
	\end{align}
\end{theorem}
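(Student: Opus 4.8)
The plan is to run the two–parameter Davie argument exactly as in the bounded case treated in \cite[Theorem 3.2]{BDM21b}, the only genuinely new ingredient being that the frozen value of $u$ inserted into the drift is an arbitrary point of $\R^d$ (not a dyadic one), which is precisely why the monotone extensions Lemma \ref{lem:PseudoMetric1a} and Lemma \ref{lem:PseudoMetric3} are needed. First I would reduce to the normalised situation $\Vert\hat b\Vert_\infty,\Vert\check b\Vert_\infty\leq1$ and record the a priori regularity of any continuous solution $u$ of \eqref{eq:DavieSheetInt1D}: since the integrand is bounded, $u$ vanishes on the axes $\{s=0\}\cup\{t=0\}$, its rectangular increments satisfy $|\Delta_{R}u|\leq C|R|$, and $u$ is Lipschitz in each variable, so that $m:=\sup_{[0,1]^2}|u|$ is finite and the oscillation of $u$ over a dyadic rectangle $I_{nk}\times I_{n\kp}$ is at most $C2^{-n}$. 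The goal is to show $m=0$.

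Next I would discretise. Writing $u_{nk\kp}:=u(k2^{-n},\kp2^{-n})$ and denoting by $\Delta_{nk\kp}u$ the rectangular increment of $u$ over $R=I_{nk}\times I_{n\kp}$, the equation gives the splitting
\[
\Delta_{nk\kp}u=-\varrho_{nk\kp}(0,u_{nk\kp})+\mathcal{E}_{nk\kp},\qquad \mathcal{E}_{nk\kp}=\int_{I_{n\kp}}\int_{I_{nk}}\bigl\{b(s_1,t_1,W_{s_1,t_1}+u(s_1,t_1))-b(s_1,t_1,W_{s_1,t_1}+u_{nk\kp})\bigr\}\,\mathrm{d}s_1\mathrm{d}t_1,
\]
a frozen term plus a remainder. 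Because $b=\hat b-\check b$ I would treat the two monotone pieces separately; monotonicity is exactly what lets me sandwich the (non-dyadic) shifts $u_{nk\kp}$ and $u(s_1,t_1)$ between dyadic vectors and thereby invoke the estimates for all real arguments. The frozen term is then controlled on a full–measure set by Lemma \ref{lem:PseudoMetric3},
\[
|\varrho_{nk\kp}(0,u_{nk\kp})|\leq C_2\sqrt{n}\,2^{-n}\bigl(|u_{nk\kp}|+2^{-4^{n}}\bigr),
\]
while differences of frozen values over neighbouring corners are controlled by the log–Lipschitz modulus of Lemma \ref{lem:PseudoMetric1a}, the shift–differences being estimated through the oscillation bound $|u(s_1,t_1)-u_{nk\kp}|\leq C2^{-n}$.

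I would then assemble these into a Gronwall/Osgood inequality for the scaled increments of $u$, following the recursion of \cite[Theorem 3.2]{BDM21b}: the remainder $\mathcal{E}_{nk\kp}$ is itself expressed through finer rectangular increments of $u$ and re-injected, so that across dyadic scales the contributions telescope, the super–exponentially small terms $2^{-4^{n}}$ disappear in the limit $n\to\infty$, and the modulus $r\mapsto r\,(\log^+(1/r))^{1/2}$—being an Osgood modulus—forces the resulting integral inequality to admit only the trivial solution. This yields $m=0$, i.e.\ $u\equiv0$, on the full–measure set $\Omega_1\cap\Omega_2$ furnished by the two lemmas (which we may take as the set $\Omega_1$ of the statement), proving the theorem.

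\textbf{Main obstacle.} The delicate point is the control of the remainder $\mathcal{E}_{nk\kp}$. Since $b$ is merely bounded and componentwise monotone (not Lipschitz), replacing $u(s_1,t_1)$ by the corner value $u_{nk\kp}$ cannot be estimated directly, and a crude summation of the frozen bounds at a fixed level $n$ even diverges (it grows like $C\sqrt{n}\,2^{n}(m+2^{-4^{n}})$). One must instead exploit that $u(s_1,t_1)-u_{nk\kp}$ is again a rectangular increment of $u$ and iterate the decomposition across scales, so that the regularising gain in Lemma \ref{lem:PseudoMetric1a} compensates the growth in the number of sub-rectangles and the error contributions form a summable series; one must also ensure that the a.s.\ constants $C_1,C_2$ are uniform over the uncountably many shifts that occur, which is precisely the uniformity supplied by the monotone extensions Lemma \ref{lem:PseudoMetric1a} and Lemma \ref{lem:PseudoMetric3}. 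The two–parameter rectangular bookkeeping makes this recursion heavier than in the one–parameter setting of \cite{Da07}.
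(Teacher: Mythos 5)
Your proposal has a genuine gap at precisely the step you flag as the ``main obstacle'': the control of the remainder $\mathcal{E}_{nk\kp}$, and the multi-scale cascade you propose to fix it does not close in the two-parameter setting. Quantitatively: a dyadic sub-rectangle at scale $\ell=n+m$ has side $2^{-\ell}$, the oscillation of $u$ over its parent is of order $2^{-\ell}$ (your own a priori bound, since the integrand in \eqref{eq:DavieSheetInt1D} is bounded), so Lemma \ref{lem:PseudoMetric1a} bounds each frozen corner-to-corner difference by $C2^{-\ell}\big[\sqrt{\ell}+(\log^+2^{\ell})^{1/2}\big]\cdot C2^{-\ell}\leq C\sqrt{\ell}\,4^{-\ell}$; summing over the $4^{m}$ sub-rectangles contained in one parent $I_{nk}\times I_{n\kp}$ gives a contribution of order $\sqrt{n+m}\,4^{-n}$ \emph{per scale}, which does not decay in $m$ (it grows), so the claimed ``summable series'' does not exist. (In Davie's one-parameter setting \cite{Da07} the same computation produces a factor $2^{-m/2}$ per scale and the cascade converges geometrically; this is exactly where the plane differs.) Worse, the terminal remainder after subdividing to any finite depth $M$ is a sum of integrals of $|b(\cdot,W+u(s_1,t_1))-b(\cdot,W+u_{R^{(M)}})|$ over the $4^{M}$ finest rectangles, and since $b$ is merely measurable (not continuous) this admits only the trivial bound $C\Vert b\Vert_{\infty}4^{-n}$, which does not vanish as $M\to\infty$. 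Note also that at separation $|x-y|\approx 2^{-\ell}$ the estimate of Lemma \ref{lem:PseudoMetric1a} is in fact weaker than the trivial bound $\Vert b\Vert_{\infty}4^{-\ell}$, so there is no regularising gain left at fine scales. Consequently your remainder can never be made smaller than roughly $4^{-n}$, and an additive error of that size, propagated through the corner-to-corner recursion, yields a final bound of order $2^{2^{n+2}}\big(\beta(n)+4^{-n}\big)$, which diverges; the scheme cannot conclude.

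The paper's proof (Lemma \ref{lem:GronwallSheet}) avoids producing any remainder at all, and this is the key idea your proposal misses: monotonicity is exploited not only to pass from dyadic to arbitrary arguments (the only use you make of it), but to freeze the varying shift \emph{one-sidedly}. Splitting $u=u^{+}-u^{-}$ componentwise and writing $\ovu_{n}(k+1,\kp+1)$, $\und_{n}(k+1,\kp+1)$ for the componentwise suprema of $u^{+}$, $u^{-}$ over $I_{nk}\times I_{n\kp}$, one has $u(s_1,t_1)\preceq u^{+}(s_1,t_1)\preceq\ovu_{n}(k+1,\kp+1)$ and $u(s_1,t_1)\succeq-\und_{n}(k+1,\kp+1)$ pointwise on the rectangle, whence, by the componentwise monotonicity of $\hat{b}_i$ and $\check{b}_i$,
\begin{align*}
\int_{I_{n\kp}}\int_{I_{nk}}\big\{\hat{b}_i(s_1,t_1,W_{s_1,t_1}+u(s_1,t_1))-\hat{b}_i(s_1,t_1,W_{s_1,t_1})\big\}\,\mathrm{d}s_1\mathrm{d}t_1\leq\hat{\varrho}^{(i)}_{nk\kp}\big(0,\ovu_n(k+1,\kp+1)\big),
\end{align*}
and symmetrically from below for $\check{b}_i$ with $-\und_n(k+1,\kp+1)$: these are exactly \eqref{eq:SheetEstuPlus}--\eqref{eq:SheetEstMinus}. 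The varying argument is thus replaced by a single constant vector per rectangle at \emph{zero} cost, only the single-point estimates of Lemma \ref{lem:PseudoMetric3} are needed, and the two-parameter induction closes (the implicit term $\max_i\wdu_{n,i}(k+1,\kp+1)$ on the right being absorbed since $C_2(\omega)\sqrt{dn}2^{-n}\leq1/6$), giving $3^{k+\kp-1}(1+3C_2\sqrt{dn}2^{-n})^{k+\kp}\beta(n)\leq 2^{2^{n+2}}\beta(n)$. Finally, no Osgood-type argument is needed or used: the conclusion $u\equiv0$ follows simply because $\beta(n)$ may be chosen $\leq 2^{-4^{2n/3}}$, which beats the double-exponential Gronwall factor as $n\to\infty$. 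Without the one-sided freezing, your argument never reaches this stage.
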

The proof of Theorem \ref{theo:DavieSheetMonotone} relies on the following Gronwall type result.
%Let $W$ be a Brownian sheet path for which the conclusions of Lemmas  \ref{lem:PseudoMetric1}, \ref{lem:PseudoMetric2} and \ref{lem:PseudoMetric3} are valid. Then
\begin{lemma}\label{lem:GronwallSheet}
	Suppose conditions of Theorem \ref{theo:DavieSheetMonotone} are valid. Then there exists $\Omega_1\subset\Omega$ with $\Pb(\Omega_1)=1$ and a positive random constant $C_2$ such that for any $\omega\in\Omega_1$, any sufficiently large positive integer $n$, any $(k,\kp)\in\{0,1,2,\cdots,2^n\}^2$, any $\beta(n)\in\left[2^{-4^{3n/4}},2^{-4^{2n/3}}\right]$, and any solution $u$ of the integral equation
	\begin{align}\label{eq:DavieSheetGronwall}
		&u(s,t)-u(s,0)-u(0,t)+u(0,0)\nonumber\\
		&=\int_0^t\int_0^s\Big\{b(s_1,t_1,W_{s_1,t_1}(\omega)+u(s_1,t_1))-b(s_1,t_1,W_{s_1,t_1}(\omega))\Big\}\mathrm{d}s_1\,\mathrm{d}t_1, 
		\quad\forall\,(s,t)\in[0,1]^2
	\end{align}
	satisfying
	\begin{equation}\label{eq:GronwallInitialV}
		\max\limits_{1\leq i\leq d}\max\{|u_i|(s,0),|u_i|(0,t)\}\leq\beta(n),\quad\forall\,(s,t)\in[0,1]^2,
	\end{equation}
	we have
	\begin{align}\label{eq:GronwallSheetEst}
		\max\limits_{1\leq i\leq d} \max\{\ovu_{n,i}(k,\kp),\und_{n,i}(k,\kp)\}\leq3^{k+\kp-1}\left(1+3C_2(\omega)\sqrt{dn}2^{-n}\right)^{k+\kp}\beta(n)\,\text{},
	\end{align}
	where $\ovu_{n}=(\ovu_{n,1},\ldots,\ovu_{n,d})$, $\und_{n}=(\und_{n,1},\ldots,\und_{n,d})$ and for every $i\in\{1,\ldots,d\}$, 
	\begin{align*}
		\overline{u}_{n,i}(k,\kp)=\sup\limits_{(s,t)\in I_{n,k-1}\times I_{n,\kp-1}}\max\{0,u_i(s,t)\}
		\text{ and }
		\underline{u}_{n,i}(k,\kp)=\sup\limits_{(s,t)\in I_{n,k-1}\times I_{n,\kp-1}}\max\{0,-u_i(s,t)\}.
	\end{align*}
\end{lemma}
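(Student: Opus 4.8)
The plan is to reduce \eqref{eq:GronwallSheetEst} to a discrete two-parameter Gronwall recursion for
$$M_n(k,\kp):=\max_{1\le i\le d}\max\{\overline{u}_{n,i}(k,\kp),\underline{u}_{n,i}(k,\kp)\},$$
and then close it by induction on $k+\kp$. The starting point is the \emph{rectangular-increment} form of \eqref{eq:DavieSheetGronwall}: since the additive boundary parts cancel, the increment of $u$ over any rectangle equals the corresponding drift integral. Fixing a cell $I_{n,k-1}\times I_{n,\kp-1}$, a point $(s,t)$ in it, and the corner $P=((k-1)2^{-n},(\kp-1)2^{-n})$, I would write
$$u_i(s,t)=u_i(s,(\kp-1)2^{-n})+u_i((k-1)2^{-n},t)-u_i(P)+\Delta_i,$$
where $\Delta_i$ is the drift increment over $[(k-1)2^{-n},s]\times[(\kp-1)2^{-n},t]$. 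Because $I_{nk}$ is half-open on the right, the three boundary points lie in the neighbouring cells $(k,\kp-1)$, $(k-1,\kp)$, $(k-1,\kp-1)$ respectively (or on the axes when $k=1$ or $\kp=1$), so their positive and negative parts are controlled by the corresponding $\overline{u}_{n,i},\underline{u}_{n,i}$ of those cells, and by $\beta(n)$ through \eqref{eq:GronwallInitialV} on the boundary.

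The crux is the bound on $\Delta_i$, where I would use the decomposition $b=\hat{b}-\check{b}$. Writing $\overline{v}=(\overline{u}_{n,j}(k,\kp))_{j}$ and $\underline{v}=(\underline{u}_{n,j}(k,\kp))_{j}$, one has $-\underline{v}\preceq u(s_1,t_1)\preceq\overline{v}$ on the cell, so componentwise monotonicity gives
$$b_i(s_1,t_1,W+u)-b_i(s_1,t_1,W)\le\big[\hat{b}_i(s_1,t_1,W+\overline{v})-\hat{b}_i(s_1,t_1,W)\big]+\big[\check{b}_i(s_1,t_1,W)-\check{b}_i(s_1,t_1,W-\underline{v})\big],$$
with both brackets nonnegative. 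Since the integrand is now nonnegative, I can enlarge the integration domain from the sub-rectangle to the full cell and apply Lemma \ref{lem:PseudoMetric3} to $\hat{b}$ and to $\check{b}$ separately (each bounded and componentwise nondecreasing), obtaining
$$\Delta_i\le C_2\sqrt{n}\,2^{-n}\big(|\overline{v}|+|\underline{v}|+2\cdot2^{-4^{n}}\big)\le 2C_2\sqrt{dn}\,2^{-n}M_n(k,\kp)+2C_2\sqrt{n}\,2^{-n}2^{-4^{n}}.$$
The symmetric lower bound for $-\Delta_i$ follows in the same way, reversing the roles of $\overline{v}$ and $\underline{v}$.

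Collecting the boundary terms and the bound on $\Delta_i$, taking the supremum over the cell and the maximum over $i$ and over $\{\overline{u},\underline{u}\}$, and setting $\theta:=2C_2\sqrt{dn}\,2^{-n}$, I arrive at
$$M_n(k,\kp)\le M_n(k,\kp-1)+M_n(k-1,\kp)+M_n(k-1,\kp-1)+\theta M_n(k,\kp)+2C_2\sqrt{n}\,2^{-n}2^{-4^{n}},$$
with the three $M_n$ terms replaced by $\beta(n)$ whenever a neighbouring index vanishes (the base cells, handled directly by \eqref{eq:GronwallInitialV}). Moving $\theta M_n(k,\kp)$ to the left and inducting on $k+\kp$ then yields \eqref{eq:GronwallSheetEst}: the factor $3$ in $3^{k+\kp-1}$ is produced by the three neighbouring terms, while $\tfrac{1}{1-\theta}\le 1+3C_2\sqrt{dn}\,2^{-n}$ for all large $n$ (recall $\theta=2C_2\sqrt{dn}\,2^{-n}$) supplies the Gronwall factor; the residual $2^{-4^{n}}$ contributions are absorbed because $\beta(n)\ge 2^{-4^{3n/4}}\gg 2^{-4^{n}}$, which is precisely where the lower end of the admissible range of $\beta(n)$ enters.

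The main obstacle is the estimate on $\Delta_i$: one must linearise the $u$-dependence inside the drift via the monotonicity of $\hat{b}$ and $\check{b}$, use nonnegativity to pass from the partial cell to a full dyadic cell so that Lemma \ref{lem:PseudoMetric3} applies, and then track the resulting self-referential term $\theta M_n(k,\kp)$ accurately enough that the specific Gronwall constant $3C_2\sqrt{dn}\,2^{-n}$ emerges. Throughout, a normalisation reducing to $\Vert\hat{b}\Vert_{\infty},\Vert\check{b}\Vert_{\infty}\le1$ and taking $C_2$ to be the larger of the two constants furnished by Lemma \ref{lem:PseudoMetric3} is understood.
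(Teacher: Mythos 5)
Your proposal is correct and takes essentially the same route as the paper's proof: the same decomposition $b=\hat b-\check b$, the same use of componentwise monotonicity to bound the drift increment by constant-argument increments at the cell supremum vectors $\overline v,\underline v$, enlargement to a full dyadic cell so that Lemma \ref{lem:PseudoMetric3} applies, the same three-neighbour recursion with the self-referential term removed via $(1-2C_2(\omega)\sqrt{dn}\,2^{-n})^{-1}\le 1+3C_2(\omega)\sqrt{dn}\,2^{-n}$, and closure by induction. The only cosmetic differences are your cell-indexing convention and that you carry the $2^{-4^n}$ residual explicitly and absorb it at the end using $\beta(n)\ge 2^{-4^{3n/4}}$, whereas the paper substitutes $\beta(n)$ for $2^{-4^n}$ already in the estimates \eqref{eq:coPseudoMetricc31}--\eqref{eq:coPseudoMetricc32}.
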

\begin{proof}
	Suppose without loss of generality that $\Vert\hat{b}_i \Vert_{\infty}\leq1$ and $\Vert \check{b}_i\Vert_{\infty}\leq1$ for every $i\in\{1,\ldots,d\}$.
	By Lemma \ref{lem:PseudoMetric3}, there exists a subset $\Omega_{2}\subset \Omega$ with $\Pb(\Omega_{2})=1$  such that for all $\omega\in\Omega_2$,
	\begin{align}\label{eq:coPseudoMetricc31}
		\max\limits_{1\leq i\leq d}\left|\hat{\varrho}^{(i)}_{nk\kp}(0,x)(\omega)\right|\leq C_2(\omega)\sqrt{n}2^{-n}\left(|x|+\beta(n)\right)\,\text{ on }\Omega_{2}
	\end{align}
	and
	\begin{align}\label{eq:coPseudoMetricc32}
		\max\limits_{1\leq i\leq d}	\left|\check{\varrho}^{(i)}_{nk\kp}(0,x)(\omega)\right|\leq C_2(\omega)\sqrt{n}2^{-n}\left(|x|+\beta(n)\right)\,\text{ on }\Omega_{2}
	\end{align}
	for all integers $n,\,k,\,\kp$ with $n\geq1$, $0\leq k,\kp\leq 2^n-1$ and all $x\in[-1,1]^d$, where
	\begin{align*}
		\hat{\varrho}^{(i)}_{nk\kp}(0,x)(\omega)=\int_{k2^{-n}}^{(k+1)2^{-n}}\int_{\kp2^{-n}}^{(\kp+1)2^{-n}}\Big\{\hat{b}_i(s_1,t_1,W_{s_1,t_1}(\omega)+x)-\hat{b}_i(s_1,t_1,W_{s_1,t_1}(\omega))\Big\}\mathrm{d}s_1\,\mathrm{d}t_1
	\end{align*}
	and
	\begin{align*}
		\check{\varrho}^{(i)}_{nk\kp}(0,x)(\omega)=\int_{k2^{-n}}^{(k+1)2^{-n}}\int_{\kp2^{-n}}^{(\kp+1)2^{-n}}\Big\{\check{b}_i(s_1,t_1,W_{s_1,t_1}(\omega)+x)-\check{b}_i(s_1,t_1,W_{s_1,t_1}(\omega))\Big\}\mathrm{d}s_1\,\mathrm{d}t_1.
	\end{align*}
	For any $\omega\in\Omega_2$, we choose $n\in\N^{\ast}$ such that $C_2(\omega)\sqrt{dn}2^{-n}\leq1/6$ and split the set $[0,1]\times[0,1]$ onto $4^n$ squares $I_{nk}\times I_{n\kp}$. We set $u=(u_1,\ldots,u_d)$, with $u^+=(u^+_1,\ldots,u^+_d)$, $u^-=(u^-_1,\ldots,u^-_d)$, $u_i^{+}=\max\{0,u_i\}$ and $u_i^{-}=\max\{0,-u_i\}$ for every $i\in\{1,\ldots,d\}$.
	Since $\hat{b}_i(s_1,t_1,\cdot)$ and $\check{b}_i(s_1,t_1,\cdot)$ are nondecreasing, we deduce from \eqref{eq:DavieSheetGronwall} that for all $i\in\{1,\ldots,d\}$ and all $(s,t)\in I_{nk}\times I_{n\kp}$,
	\begin{align*}
		&u_i(s,t)-u_i(s,\kp2^{-n})-u_i(k2^{-n},t)+u_i(2^{-n}(k,\kp))\\
		=&\int_{k2^{-n}}^{s}\int_{\kp2^{-n}}^{t}\Big\{\hat{b}_i(s_1,t_1,W_{s_1,t_1}(\omega)+u(s_1,t_1))-\hat{b}_i(s_1,t_1,W_{s_1,t_1}(\omega))\Big\}\mathrm{d}s_1\,\mathrm{d}t_1\\
		&-\int_{k2^{-n}}^{s}\int_{\kp2^{-n}}^{t}\Big\{\check{b}_i(s_1,t_1,W_{s_1,t_1}(\omega)+u(s_1,t_1))-\check{b}_i(s_1,t_1,W_{s_1,t_1}(\omega))\Big\}\mathrm{d}s_1\,\mathrm{d}t_1\\
		\leq&\int_{k2^{-n}}^{s}\int_{\kp2^{-n}}^{t}\Big\{\hat{b}_i(s_1,t_1,W_{s_1,t_1}(\omega)+u^+(s_1,t_1))-\hat{b}_i(s_1,t_1,W_{s_1,t_1}(\omega))\Big\}\mathrm{d}s_1\,\mathrm{d}t_1\\
		&-\int_{k2^{-n}}^{s}\int_{\kp2^{-n}}^{t}\Big\{\check{b}_i(s_1,t_1,W_{s_1,t_1}(\omega)-u^{-}(s_1,t_1))-\check{b}_i(s_1,t_1,W_{s_1,t_1}(\omega))\Big\}\mathrm{d}s_1\,\mathrm{d}t_1. 
	\end{align*}
	Then, using the fact that $\max\{0,x+y\}\leq\max\{0,x\}+\max\{0,y\}$, we have
	\begin{align*}
		u_i^{+}(s,t)\leq& \max\{0,u_i(s,\kp2^{-n})+u_i(k2^{-n},t)-u_i(2^{-n}(k,\kp))\}\\
		&+\int_{k2^{-n}}^{s}\int_{\kp2^{-n}}^{t}\Big\{\hat{b}_i(s_1,t_1,W_{s_1,t_1}(\omega)+u^{+}(s_1,t_1))-\hat{b}_i(s_1,t_1,W_{s_1,t_1}(\omega))\Big\}\mathrm{d}s_1\,\mathrm{d}t_1\\
		&-\int_{k2^{-n}}^{s}\int_{\kp2^{-n}}^{t}\Big\{\check{b}_i(s_1,t_1,W_{s_1,t_1}(\omega)-u^{-}(s_1,t_1))-\check{b}_i(s_1,t_1,W_{s_1,t_1}(\omega))\Big\}\mathrm{d}s_1\,\mathrm{d}t_1.
	\end{align*}
	As a consequence,
	\begin{align}\label{eq:SheetEstuPlus}
		u_i^{+}(s,t)&\leq u_i^{+}(s,\kp2^{-n})+u_i^{+}(k2^{-n},t)+u_i^{-}(2^{-n}(k,\kp))
		+\hat{\varrho}^{(i)}_{nk\kp}\left(0,\ovu_n(k+1,\kp+1)\right)(\omega)\\&\qquad-\check{\varrho}^{(i)}_{nk\kp}\left(0,-\underline{u}_n(k+1,\kp+1)\right)(\omega)\nonumber
	\end{align}
	for all $(s,t)\in I_{nk}\times I_{n\kp}$.
	Similarly, we can show that
	\begin{align}\label{eq:SheetEstMinus}
		u_i^{-}(s,t)&\leq u_i^{-}(s,\kp2^{-n})+u_i^{-}(k2^{-n},t)+u_i^{+}(2^{-n}(k,\kp))
		-\hat{\varrho}^{(i)}_{nk\kp}\left(0,-\underline{u}_n(k+1,\kp+1)\right)(\omega)\\&\qquad+\check{\varrho}^{(i)}_{nk\kp}\left(0,\ovu_n(k+1,\kp+1)\right)(\omega)\nonumber
	\end{align}
	for all $(s,t)\in I_{nk}\times I_{n\kp}$.
	For any $k,\,\kp\in\{1,2,\cdots,2^n\}$ and $i\in\{1,\ldots,d\}$, we define $\widehat{u}_{n,i}(k,\kp)=\max\{\ovu_{n,i}(k,\kp),\und_{n,i}(k,\kp)\}$.
	\textcolor{black}{We deduce from Inequalities \eqref{eq:coPseudoMetricc31}-\eqref{eq:SheetEstMinus} that
		\begin{align*}
			\ovu_{n,i}(k+1,\kp+1)
			\leq& \max\limits_{1\leq j\leq d}\wdu_{n,j}(k,\kp+1)+\max\limits_{1\leq j\leq d}\wdu_{n,j}(k+1,\kp)+\max\limits_{1\leq j\leq d}\wdu_{n,j}(k,\kp)\\&+2C_2(\omega)\sqrt{n}2^{-n}\left(\sqrt{d}\max\limits_{1\leq j\leq d}\wdu_{n,j}(k+1,\kp+1)+\beta(n)\right)
		\end{align*}
		and
		\begin{align*}
			\und_{n,i}(k+1,\kp+1)
			\leq& \max\limits_{1\leq j\leq d}\wdu_{n,j}(k,\kp+1)+\max\limits_{1\leq j\leq d}\wdu_{n,j}(k+1,\kp)+\max\limits_{1\leq j\leq d}\wdu_{n,j}(k,\kp)\\&+2C_2(\omega)\sqrt{n}2^{-n}\left(\sqrt{d}\max\limits_{1\leq j\leq d}\wdu_{n,j}(k+1,\kp+1)+\beta(n)\right).
		\end{align*}
		Then
		\begin{align*}
			\max\limits_{1\leq i\leq d}\wdu_{n,i}(k+1,\kp+1)
			\leq& \max\limits_{1\leq i\leq d}\wdu_{n,i}(k,\kp+1)+\max\limits_{1\leq i\leq d}\wdu_{n,i}(k+1,\kp)+\max\limits_{1\leq i\leq d}\wdu_{n,i}(k,\kp)\\&+2C_2(\omega)\sqrt{n}2^{-n}\left(\sqrt{d}\max\limits_{1\leq i\leq d}\wdu_{n,i}(k+1,\kp+1)+\beta(n)\right).
		\end{align*}
		Since $C_2(\omega)\sqrt{dn}2^{-n}\leq1/6$, we have $(1-2C_2(\omega)\sqrt{dn}2^{-n})^{-1}\leq(1+3C_2(\omega)\sqrt{dn}2^{-n})$ and the above inequality implies
		\begin{align*}
			\max\limits_{1\leq i\leq d}\wdu_{n,i}(k+1,\kp+1)
			\leq& (1+3C_2(\omega)\sqrt{dn}2^{-n})\Big(\max\limits_{1\leq i\leq d}\wdu_{n,i}(k,\kp+1)+\max\limits_{1\leq i\leq d}\wdu_{n,i}(k+1,\kp)+\max\limits_{1\leq i\leq d}\wdu_{n,i}(k,\kp)\\&+2C_2(\omega)\sqrt{n}2^{-n} \beta(n)\Big).
		\end{align*}
		The desired result then follows by induction on $k$ and $\kp$ as in the proof of Lemma 3.9 in \cite{BDM21b}.}

\end{proof}

We now turn to the proof of  Theorem 
\ref{theo:DavieSheetMonotone}.

\begin{proof}[Proof of  Theorem \ref{theo:DavieSheetMonotone}]
	Choose $\Omega_1$, $\omega$, $n$ and $\beta(n)$ as in Lemma \ref{lem:GronwallSheet}.
	Let $u$ be a solution of (\ref{eq:DavieSheetInt1D}).  We have $\max\{|u|(s,0),|u|(0,t)\}=0\leq\beta(n)$ for all $(s,t)\in[0,1]^2$. Moreover, we deduce from (\ref{eq:GronwallSheetEst}) that
	\begin{align}\label{eq:GrowllUniqBound}
		\sup\limits_{k,\kp\in\{0,1,2,\cdots,2^n\}}\max\limits_{1\leq i\leq d}\max\{\ovu_{n,i}(k,\kp),\und_{n,i}(k,\kp)\}\leq 2^{2^{n+2}}\beta(n)
	\end{align}
	for all $n$ satisfying $C_2(\omega)\sqrt{dn}2^{-n}\leq1/9$. Since the right hand side of (\ref{eq:GrowllUniqBound}) converges to $0$ as $n$ goes to $\infty$, then, for all $(s,t)$, we have $u(s,t)=0$ on $\Omega_1$.
\end{proof}

\section{Malliavin regularity}\label{Malreg}

In this section we study the Malliavin regularity of the strong solution to \eqref{Eqmainhpde5}. 
\subsection{Basic facts on Malliavin calculus and compactness criterion on the plane}
We first recall some basic facts on Malliavin calculus for Wiener functionals on the plane which can be found in \cite[Section 2]{NuSa85} (see also \cite[Section 1]{NuSa89}).
Let $(\Omega,\mathcal{F},\Pb)$ be the canonical space associated to the $d$-dimensional Brownian sheet, that is $\Omega$ is the space of all continuous functions $\omega:\,\Gamma\to\R^d$ which vanish on the axes, $\Pb$ is the Wiener measure and $\mathcal{F}$ is the completion of the Borel $\sigma$-algebra of $\Omega$ with respect to $\Pb$. Let $(\mathcal{F}_{s,t},(s,t)\in\Gamma)$ denote the nondecreasing family of $\sigma$-algebras where $\mathcal{F}_{s,t}$ is generated by the functions $(s_1,t_1)\mapsto\omega(s_1\wedge s,t_1\wedge t)$, $(s_1,t_1)\in\Gamma$, $\omega\in\Omega$ and the null sets of $\mathcal{F}$. Consider the following subset $H$ of $\Omega$:
$$
H=\Big\{\omega\in\Omega:\,\text{there exists }\dot{\omega}\in L^2(\Gamma,\R^d)\text{ such that }\omega(s,t)=\int_0^s\int_0^t\dot{\omega}(s_1,t_1)\,\mathrm{d}t_1\mathrm{d}s_1,\text{ for any }(s,t)\in\Gamma\Big\}.
$$
Endowed with the inner product 
$$
\langle \omega_1,\omega_2\rangle_H=\sum\limits_{i=1}^d\int_{\Gamma}\dot{\omega}_1^{(i)}(s_1,t_1)\dot{\omega}_2^{(i)}(s_1,t_1)\mathrm{d}s_1\mathrm{d}t_1,
$$
the set $H$ is a Hilbert space. We call Wiener functional any measurable function defined on the Wiener space $(\Omega,\mathcal{F},\Pb)$. A Wiener functional $F:\,\Omega\to\R$ is said to be smooth if there exists some integer $n\geq1$ and an infinitely differentiable function $f$ on $\R^n$ such that 
\begin{enumerate}
	\item[(i)] $f$ and all its derivatives have at most polynomial growth order,
	\item[(ii)] $F(\omega)=f(\omega(s_1,t_1),\ldots,\omega(s_n,t_n))$ for some $(s_1,t_1),\ldots,(s_n,t_n)\in\Gamma$.
\end{enumerate}
Every smooth functional $F$ is Fr\'echet-differentiable and the Fr\'echet-derivative of $F$ along any vector $h\in H$ is given by
$$
DF(h)=\sum\limits_{j=1}^d\sum\limits_{i=1}^n\frac{\partial f}{\partial x_i^{(j)}}(\omega(s_1,t_1),\ldots,\omega(s_n,t_n))h^{(j)}(s_i,t_i)=\sum\limits_{j=1}^d\int_{\Gamma}\xi_j(q,r)\dot{h}^{(j)}(q,r)\mathrm{d}q\mathrm{d}r,
$$
where 
$$
\xi_j(q,r)=\sum\limits_{i=1}^n\frac{\partial f}{\partial x_i^{(j)}}(\omega(s_1,t_1),\ldots,\omega(s_n,t_n))\mathbf{1}_{[0,s_i]\times[0,t_i]}(q,r).
$$

Let $\mathbb{D}_{2,1}$ denote the closed hull of the family of smooth functionals with respect to the norm
$$
\Vert F\Vert^2_{2,1}=\Vert F\Vert^2_{L^2(\Omega)}+\Vert DF\Vert^2_{L^2(\Omega;H)}.
$$

Now we present a useful characterization of relatively compact subsets in the space $L^2(\Omega,\R^d)$. Let us recall the following compactness criterion provided in \cite[Theorem 1]{DaMN92}. 
\begin{theorem}\label{theoCompact}
	Let $A$ be a self-adjoint compact operator on $H$. Then, for any $c>0$, the set 
	$$
	\mathcal{G}=\left\{G\in\mathbb{D}_{2,1}:\,\Vert G\Vert_{L^2(\Omega)}+\Vert A^{-1}DG\Vert_{L^2(\Omega;H)}\leq c\right\}
	$$
	is relatively compact in $L^2(\Omega,\mathbb{R}^d)$.
\end{theorem}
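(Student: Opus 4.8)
The plan is to exploit the spectral decomposition of $A$ to reduce the infinite-dimensional compactness question to a finite-dimensional Gaussian Sobolev embedding, the compactness of $A$ being exactly what tames the infinitely many noise directions. Since $A$ is self-adjoint and compact (and may be assumed injective, as otherwise $A^{-1}DG$ is undefined), there is an orthonormal basis $\{e_j\}_{j\geq1}$ of $H$ and nonzero reals $\lambda_j\to0$ with $Ae_j=\lambda_je_j$. Writing $D_{e_j}G:=\langle DG,e_j\rangle_H$ for the directional Malliavin derivatives, I would first record two consequences of membership in $\mathcal{G}$. On one hand, $DG=A(A^{-1}DG)$ gives $\Vert DG\Vert_{L^2(\Omega;H)}\leq\Vert A\Vert_{\mathrm{op}}\,c$, so that $\mathcal{G}$ is bounded in $\mathbb{D}_{2,1}$. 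On the other hand, expanding $\Vert A^{-1}DG\Vert_H^2=\sum_j\lambda_j^{-2}(D_{e_j}G)^2$ and taking expectations yields $\sum_j\lambda_j^{-2}\,\E[(D_{e_j}G)^2]\leq c^2$, whence the crucial decay $\E[(D_{e_j}G)^2]\leq c^2\lambda_j^2$ for every $j$. It is this decay, forced by the compactness of $A$, that supplies uniform control of the high noise directions.

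Second, I would introduce truncation operators. Let $\pi_m$ be the orthogonal projection of $H$ onto $\mathrm{span}\{e_1,\dots,e_m\}$ and let $P_m=\Gamma(\pi_m)$ be its second quantization, equivalently the conditional expectation of $G$ given the first $m$ Gaussian coordinates $e_1,\dots,e_m$. The heart of the argument is a Poincaré-type approximation inequality
\begin{align*}
	\Vert G-P_mG\Vert_{L^2(\Omega)}^2\leq\sum_{j>m}\E[(D_{e_j}G)^2].
\end{align*}
I would prove this on the Wiener chaos expansion $G=\sum_n I_n(f_n)$: since $P_mG=\sum_nI_n(\pi_m^{\otimes n}f_n)$, the left-hand side equals $\sum_n n!\,\Vert(I^{\otimes n}-\pi_m^{\otimes n})f_n\Vert^2$, and the telescoping decomposition of $I^{\otimes n}-\pi_m^{\otimes n}$ into mutually orthogonal pieces (each carrying the factor $I-\pi_m$ in exactly one slot) bounds this by $\sum_n n\cdot n!\,\Vert\bigl((I-\pi_m)\text{ in one slot}\bigr)f_n\Vert^2=\sum_{j>m}\E[(D_{e_j}G)^2]$. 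Combining with the decay of the previous step gives $\Vert G-P_mG\Vert_{L^2(\Omega)}^2\leq c^2\sup_{j>m}\lambda_j^2$, which tends to $0$ as $m\to\infty$ \emph{uniformly} over $G\in\mathcal{G}$.

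Third, I would fix $m$ and treat $\{P_mG:G\in\mathcal{G}\}$ as functionals on the finite-dimensional Gaussian space $(\R^m,\gamma_m)$. Since $\Vert P_mG\Vert_{L^2(\Omega)}\leq c$ and $\Vert D(P_mG)\Vert_{L^2(\Omega;H)}\leq\Vert DG\Vert_{L^2(\Omega;H)}\leq\Vert A\Vert_{\mathrm{op}}c$, with $D(P_mG)$ corresponding to the gradient on $(\R^m,\gamma_m)$, this family is bounded in the Gaussian Sobolev space $W^{1,2}(\gamma_m)$. Because the Ornstein–Uhlenbeck operator on $\R^m$ has discrete spectrum with eigenvalues tending to $-\infty$ (finite-multiplicity Hermite eigenfunctions), the embedding $W^{1,2}(\gamma_m)\hookrightarrow L^2(\gamma_m)$ is compact; hence $P_m\mathcal{G}$ is relatively compact in $L^2(\Omega,\R^d)$. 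A standard total-boundedness argument then finishes: given $\ve>0$, choose $m$ with $\sup_{G\in\mathcal{G}}\Vert G-P_mG\Vert_{L^2(\Omega)}<\ve/2$, cover the relatively compact set $P_m\mathcal{G}$ by finitely many balls of radius $\ve/2$, and obtain a finite $\ve$-net for $\mathcal{G}$.

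The main obstacle is the Poincaré-type inequality of the second step, i.e.\ controlling the approximation error $G-P_mG$ purely by the derivatives in the discarded directions $j>m$; this, together with the decay $\E[(D_{e_j}G)^2]\le c^2\lambda_j^2$, is precisely the mechanism by which the compactness of $A$ compensates for the failure of the Rellich embedding in infinite dimensions (the componentwise treatment of the $\R^d$-valued case being routine). The remaining ingredient, compactness of the finite-dimensional Gaussian Sobolev embedding, is classical.
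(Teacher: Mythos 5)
Your proposal is correct, and every step checks out: the spectral bound $\E[(D_{e_j}G)^2]\leq c^2\lambda_j^2$, the mutual orthogonality of the telescoping pieces of $I^{\otimes n}-\pi_m^{\otimes n}$ (each pair of terms is orthogonal in the slot where one carries $I-\pi_m$ and the other carries $\pi_m$), the resulting Poincar\'e-type inequality $\Vert G-P_mG\Vert^2_{L^2(\Omega)}\leq\sum_{j>m}\E[(D_{e_j}G)^2]$, the contraction properties $\Vert P_mG\Vert_{L^2}\leq\Vert G\Vert_{L^2}$ and $\Vert D(P_mG)\Vert_{L^2(\Omega;H)}\leq\Vert DG\Vert_{L^2(\Omega;H)}$, and the compactness of the embedding of the Gaussian Sobolev space over $(\R^m,\gamma_m)$ into $L^2(\gamma_m)$. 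One thing to be aware of, however, is that the paper contains no proof of this statement to compare against: Theorem \ref{theoCompact} is recalled verbatim from \cite{DaMN92} and is only used, via Lemma \ref{lemCompact}, to establish Corollary \ref{CorolCompact}. The meaningful comparison is therefore with the cited reference, and your argument is essentially a reconstruction of it: diagonalize the compact operator $A$, use the forced decay of the derivative in the high eigendirections to approximate $G$ uniformly over $\mathcal{G}$ by its conditional expectation on finitely many Gaussian coordinates, and conclude by finite-dimensional compactness plus a finite $\ve$-net patching. Two cosmetic points: order the eigenvalues so that $|\lambda_1|\geq|\lambda_2|\geq\cdots$, which makes $\sup_{j>m}|\lambda_j|=|\lambda_{m+1}|\to0$ explicit; and the scalar-versus-$\R^d$ mismatch in the statement (membership in the scalar space $\mathbb{D}_{2,1}$ but compactness claimed in $L^2(\Omega,\R^d)$) is inherited from the paper's formulation, and your componentwise reading is the right way to interpret it.
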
 
\noindent
In order to apply the above result, we consider the fractional Sobolev space: 
$$
\mathbb{G}^2_{\beta}(U;\R):=\Big\{g\in L^2(U,\R):\,\int_U\int_U\frac{\vert g(u)-g(u')\vert^2}{\vert u-u'\vert^{p+2\beta}}\mathrm{d}u\mathrm{d}u'<\infty\Big\},
$$ 
where $U$ is a domain of $\R^p$, $p\geq1$ and the norm is given by 
$$
\Vert g\Vert_{\mathbb{G}^2_{\beta}(U;\R)}:=\Vert g\Vert_{L^2(U;\R)}+\Big(\int_U\int_U\frac{\vert g(u)-g(u')\vert^2}{\vert u-u'\vert^{p+2\beta}}\mathrm{d}u\mathrm{d}u'\Big)^{1/2}.
$$
We need the next compact embedding result  from \cite[Lemma 10]{PSV13} (see also \cite[Theorem 7.1]{DNPV12}).
\begin{lemma}\label{lemCompact}
	Let $p\geq1$, $U\subset\R^p$ be a Lipschitz bounded open set and $\mathcal{J}$ be a bounded subset of $L^2(U;\R)$. Suppose that
	$$
	\sup\limits_{g\in\mathcal{J}}\int_U\int_U\frac{\vert g(u)-g(u')\vert^2}{\vert u-u'\vert^{p+2\beta}}\mathrm{d}u\mathrm{d}u'<\infty
	$$
	for some $\beta\in(0,1)$. Then $\mathcal{J}$ is relatively compact in $L^2(U;\R)$.
\end{lemma}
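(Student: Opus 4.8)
The plan is to derive the statement from the classical Fr\'echet--Kolmogorov--Riesz compactness theorem in $L^2$, whose only nontrivial input is the uniform $L^2$-continuity of translations. Writing
\[
[g]_\beta^2:=\int_U\int_U\frac{\vert g(u)-g(u')\vert^2}{\vert u-u'\vert^{p+2\beta}}\,\mathrm{d}u\,\mathrm{d}u'
\]
for the Gagliardo seminorm, the hypotheses say exactly that $\mathcal{J}$ is bounded in $L^2(U;\R)$ and that $\sup_{g\in\mathcal{J}}[g]_\beta<\infty$. The crux of the whole argument is the translation estimate: there is a constant $C=C(p,\beta)$ such that, for every open set $V$ whose closure lies in $U$ and every vector $h$ with $\vert h\vert$ less than a fixed fraction of the distance from $V$ to $\partial U$,
\[
\int_V\vert g(x+h)-g(x)\vert^2\,\mathrm{d}x\le C\,\vert h\vert^{2\beta}\,[g]_\beta^2 .
\]

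First I would prove this estimate by averaging over balls. Fix $x$ and small $h$, and let $B=B(x+h/2,\vert h\vert)$, a ball of measure $c_p\vert h\vert^p$ containing both $x$ and $x+h$. From $g(x+h)-g(x)=\big(g(x+h)-g(w)\big)+\big(g(w)-g(x)\big)$, averaging over $w\in B$ and using Cauchy--Schwarz,
\[
\vert g(x+h)-g(x)\vert^2\le \frac{2}{\vert B\vert}\int_B\vert g(x+h)-g(w)\vert^2\,\mathrm{d}w+\frac{2}{\vert B\vert}\int_B\vert g(x)-g(w)\vert^2\,\mathrm{d}w .
\]
Since $\vert x+h-w\vert\le\tfrac32\vert h\vert$ and $\vert x-w\vert\le\tfrac32\vert h\vert$ for $w\in B$, inserting the factors $\vert x+h-w\vert^{-(p+2\beta)}$ and $\vert x-w\vert^{-(p+2\beta)}$ costs only a constant multiple of $\vert h\vert^{2\beta}$ after cancelling $\vert B\vert^{-1}=c_p^{-1}\vert h\vert^{-p}$. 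Integrating in $x\in V$ and substituting $y=x+h$ in the first term then identifies the right-hand side with a portion of the double integral defining $[g]_\beta^2$, since for $\vert h\vert$ small all the points involved stay inside $U$; this gives the displayed bound, with constants independent of $g$.

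The remaining difficulty is the boundary: when $x$ is close to $\partial U$ the translate $x+h$ leaves $U$, so the interior estimate alone does not bound $\Vert g(\cdot+h)-g\Vert_{L^2}$ over all of $U$, and a bounded family in $L^2$ may a priori concentrate mass in a boundary collar of width $\vert h\vert$. This is precisely where the Lipschitz regularity of $\partial U$ enters: I would invoke the bounded linear extension operator $E\colon \mathbb{G}^2_{\beta}(U;\R)\to\mathbb{G}^2_{\beta}(\R^p;\R)$ available for bounded Lipschitz domains, normalised so that every $Eg$ is supported in one fixed bounded set $K\supset\overline{U}$ and $\Vert Eg\Vert_{\mathbb{G}^2_{\beta}(\R^p;\R)}\le C\Vert g\Vert_{\mathbb{G}^2_{\beta}(U;\R)}$. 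Applying the averaging estimate on all of $\R^p$ (where there is no boundary) to the extended functions yields
\[
\sup_{g\in\mathcal{J}}\Vert (Eg)(\cdot+h)-Eg\Vert_{L^2(\R^p)}\le C\,\vert h\vert^{\beta}\ \xrightarrow[\ \vert h\vert\to0\ ]{}\ 0 .
\]
Equivalently, one may obtain the same bound on the Fourier side, using that $[\,\cdot\,]_\beta$ is comparable to the homogeneous $\dot{H}^\beta$ seminorm and that $\vert e^{\mathrm{i}\xi\cdot h}-1\vert^2\le C\vert h\vert^{2\beta}\vert\xi\vert^{2\beta}$.

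With these facts the conclusion is immediate. The family $\{Eg:g\in\mathcal{J}\}$ is bounded in $L^2(\R^p)$, uniformly translation-continuous, and uniformly supported in the fixed compact set $K$, so the three hypotheses of the Fr\'echet--Kolmogorov--Riesz theorem on $\R^p$ are met and $\{Eg\}$ is relatively compact in $L^2(\R^p)$. Since the restriction map $L^2(\R^p)\to L^2(U;\R)$ is a contraction and sends $Eg$ to $g$, it maps this relatively compact set onto $\mathcal{J}$, which is therefore relatively compact in $L^2(U;\R)$. I expect the genuine obstacle to be the treatment of the boundary collar, that is, constructing and exploiting the extension operator (the sole place where the Lipschitz hypothesis is needed); the interior translation estimate and the closing Fr\'echet--Kolmogorov step are routine once the extension is in hand.
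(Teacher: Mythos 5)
You should first note that the paper contains no proof of this lemma: it is imported verbatim from \cite[Lemma 10]{PSV13} (see also \cite[Theorem 7.1]{DNPV12}), so the comparison is against the proofs in those references rather than against anything internal to the paper. Your argument is correct and is, in substance, the standard one. The interior translation estimate by averaging over the ball $B(x+h/2,\vert h\vert)$ is sound (inserting the factor $\vert x+h-w\vert^{-(p+2\beta)}$ at the cost of $C\vert h\vert^{2\beta}$ is exactly right), the Fourier variant is also valid since the Gagliardo seminorm is comparable to the homogeneous $\dot H^{\beta}$ seminorm on $\R^p$, and your reduction of the boundary difficulty to the existence of a bounded extension operator $\mathbb{G}^2_{\beta}(U;\R)\to\mathbb{G}^2_{\beta}(\R^p;\R)$ is precisely how \cite{DNPV12} proceed: their Theorem 7.1 is stated for extension domains, and bounded Lipschitz sets are extension domains for $W^{\beta,2}$ by their Theorem 5.4. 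That extension theorem is the one nontrivial fact you take on faith, and you correctly isolate it as the only place the Lipschitz hypothesis enters. The only real divergence is at the final step: \cite{DNPV12} do not invoke the Fr\'echet--Kolmogorov--Riesz theorem as a black box but in effect reprove it, building a finite $\varepsilon$-net from piecewise-constant averages of the extended functions over a grid of small cubes, whereas you pass through uniform translation continuity plus the classical compactness criterion (with tightness supplied by the fixed compact support, obtained from a cutoff, which is indeed a bounded operation on $\mathbb{G}^2_{\beta}(\R^p;\R)$). The two routes are equivalent in content and both rigorous; yours is slightly more modular, theirs slightly more self-contained.
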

As a consequence of Theorem \ref{theoCompact} and Lemma \ref{lemCompact}, we have the following compactness criterion for subsets in the space $L^2(\Omega,\R^d)$. 
\begin{corollary}\label{CorolCompact}
	Denote by $\mathcal{F}^{W}_{\infty}$ the $\sigma$-algebra generated by the $d$-dimensional Brownian sheet $W=(W^{(1)},\ldots,W^{(d)})$. Let $(X^{(n)},n\in\N)$ be a sequence of $(\mathcal{F}^{W}_{\infty},\mathcal{B}(\R^d))$-measurable random variables and let $D_{s,t}$ be the Malliavin derivative associated with the random vector $W_{s,t}=(W_{s,t}^{(1)},\ldots,W_{s,t}^{(d)})$. Suppose 
	\begin{align}\label{EqSeqXn1}
		\sup\limits_{n\in\N}\Vert X^{(n)}\Vert_{L^2(\Omega,\R^d)}<\infty\,\text{ and }\,\sup\limits_{n\in\N}\Vert D_{\cdot,\cdot}X^{(n)}\Vert_{L^2(\Omega\times[0,1]^2,\R^{d\times d})}<\infty
	\end{align}
	as well as
	\begin{align}\label{EqSeqXn2}
		\sup\limits_{n\in\N}\int_0^1\int_0^1\int_0^1\int_0^1\frac{\E\left[\Vert D_{s,t}X^{(n)}-D_{\ps,\tp}X^{(n)}\Vert^2\right]}{(|s-\ps|+|t-\tp|)^{2+2\beta}}\mathrm{d}s\mathrm{d}\ps\mathrm{d}t\mathrm{d}\tp<\infty
	\end{align}
	for some $\beta\in(0,1)$. Then $(X^{(n)},n\in\N)$ is relatively compact in $L^2(\Omega,\R^d)$.
\end{corollary}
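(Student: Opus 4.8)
The plan is to combine Theorem \ref{theoCompact} with the compact embedding of Lemma \ref{lemCompact}. First I identify the Cameron--Martin space $H$ with $L^2([0,1]^2;\R^d)$ through the density $\dot\omega$, so that for each fixed $\omega$ the map $(s,t)\mapsto D_{s,t}X^{(n)}(\omega)$ is (the $\R^{d\times d}$-valued realization of) an element of $H$. Under this identification the two hypotheses say precisely that the Malliavin derivatives are uniformly bounded in a fractional Sobolev space: by \eqref{EqSeqXn1} the maps $D_{\cdot,\cdot}X^{(n)}$ are bounded in $L^2(\Omega;H)$, while \eqref{EqSeqXn2} controls, uniformly in $n$, the averaged Gagliardo seminorm of order $\beta$. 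Since $\tfrac{1}{\sqrt2}(|s-\ps|+|t-\tp|)\le |(s,t)-(\ps,\tp)|\le |s-\ps|+|t-\tp|$, the weight $(|s-\ps|+|t-\tp|)^{2+2\beta}$ appearing in \eqref{EqSeqXn2} is comparable to $|(s,t)-(\ps,\tp)|^{2+2\beta}$, i.e.\ to the weight $|u-u'|^{p+2\beta}$ with $p=2$ that defines $\mathbb{G}^2_\beta((0,1)^2;\R)$. Hence, writing $\Vert g\Vert_V^2:=\Vert g\Vert_{L^2}^2+[g]_\beta^2$ for the Hilbert norm equivalent to $\Vert\cdot\Vert_{\mathbb{G}^2_\beta}$ (here $[g]_\beta$ is the Gagliardo seminorm), hypotheses \eqref{EqSeqXn1}--\eqref{EqSeqXn2} give, after Tonelli, $\sup_n \E\big[\Vert D_{\cdot,\cdot}X^{(n)}\Vert_V^2\big]<\infty$.

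The next step is to manufacture the operator $A$ required by Theorem \ref{theoCompact} out of this Sobolev control. Let $V:=\mathbb{G}^2_\beta((0,1)^2;\R)$, regarded as a Hilbert space with the inner product associated to $\Vert\cdot\Vert_V$, and let $J:V\hookrightarrow L^2((0,1)^2;\R)$ be the inclusion, which is dense, injective and, by Lemma \ref{lemCompact}, compact. Then $JJ^\ast$ is a compact, self-adjoint, positive and injective operator on $L^2((0,1)^2;\R)$; I set $A:=(JJ^\ast)^{1/2}$, a self-adjoint compact operator, and extend it componentwise to act on $H=L^2((0,1)^2;\R^d)$ (and likewise on $\R^{d\times d}$-valued maps). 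Diagonalising $JJ^\ast$ in an orthonormal basis $\{h_k\}$ of $L^2$ with eigenvalues $\mu_k\downarrow0$ and setting $v_k:=\mu_k^{-1/2}J^\ast h_k$, one checks that $\{v_k\}$ is an orthonormal basis of $V$ with $A h_k=v_k$ in $L^2$; consequently $A^{-1}$ is an isometry from $(V,\Vert\cdot\Vert_V)$ onto its range in $L^2$, that is $\Vert A^{-1}g\Vert_{L^2}=\Vert g\Vert_V$ for every $g\in V$.

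With this $A$ in hand the conclusion is immediate. By the isometry and the bound obtained in the first paragraph, $\Vert A^{-1}D_{\cdot,\cdot}X^{(n)}\Vert_{L^2(\Omega;H)}^2=\E\big[\Vert D_{\cdot,\cdot}X^{(n)}\Vert_V^2\big]$ is finite uniformly in $n$ (in particular each $X^{(n)}$ lies in $\mathbb{D}_{2,1}$). Combining this with $\sup_n\Vert X^{(n)}\Vert_{L^2(\Omega)}<\infty$ from \eqref{EqSeqXn1}, there is a finite constant $c$ with $\Vert X^{(n)}\Vert_{L^2(\Omega)}+\Vert A^{-1}D X^{(n)}\Vert_{L^2(\Omega;H)}\le c$ for all $n$, so the whole sequence lies in the set $\mathcal{G}$ of Theorem \ref{theoCompact}. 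That theorem then yields relative compactness of $(X^{(n)})_{n\in\N}$ in $L^2(\Omega,\R^d)$.

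I expect the only genuinely delicate point to be the construction of $A$ together with the identity $\Vert A^{-1}g\Vert_{L^2}=\Vert g\Vert_V$: one must make sure that the fractional Sobolev space is equipped with a genuine Hilbert inner product (so that the spectral theorem applies), that the compact embedding of Lemma \ref{lemCompact} is exactly what forces $JJ^\ast$—and hence $A$—to be compact, and that the componentwise extension to $\R^d$/$\R^{d\times d}$-valued maps preserves self-adjointness and compactness. The remaining steps—Tonelli to move $\E$ through the quadruple integral, and the elementary equivalence of the two distance functions—are routine.
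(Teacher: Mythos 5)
Your proof is correct, and it follows the same overall skeleton as the paper: both reduce the statement to Theorem \ref{theoCompact} plus the fractional compact embedding of Lemma \ref{lemCompact}, after observing that \eqref{EqSeqXn1}--\eqref{EqSeqXn2} bound $DX^{(n)}$ uniformly in $L^2(\Omega;V)$ with $V=\mathbb{G}^2_\beta((0,1)^2;\R)$ (the equivalence of the weights $(|s-\ps|+|t-\tp|)^{2+2\beta}$ and $|(s,t)-(\ps,\tp)|^{2+2\beta}$ being harmless). Where you genuinely diverge is in the construction of the operator $A$. The paper introduces the closed symmetric form $\mathcal{L}(f,g)$ (essentially the $V$-inner product), invokes Kato's first representation theorem to obtain a positive self-adjoint $T_{\mathcal{L}}$ with $\mathcal{L}(f,g)=\langle f,T_{\mathcal{L}}g\rangle_{L^2}$, sets $A=T_{\mathcal{L}}^{1/2}$, and cites Lewis's result together with Lemma \ref{lemCompact} to get discrete spectrum and a compact inverse. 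You instead take the inclusion $J:V\hookrightarrow L^2$, note it is compact by Lemma \ref{lemCompact}, and define $A=(JJ^\ast)^{1/2}$ directly, proving the isometry $\Vert A^{-1}g\Vert_{L^2}=\Vert g\Vert_V$ by explicit diagonalization; these are the same operator up to inversion, since $T_{\mathcal{L}}=(JJ^\ast)^{-1}$. Your route buys two things: it is self-contained (no appeal to Kato or Lewis, only the spectral theorem for compact operators), and it makes explicit the identity that the paper leaves implicit in its final sentence. It also sidesteps a small notational wrinkle in the paper: the paper's $A=T_{\mathcal{L}}^{1/2}$ is an \emph{unbounded} operator with compact inverse, whereas Theorem \ref{theoCompact} asks for a compact self-adjoint $A$; strictly speaking the paper applies the theorem with its $A^{-1}$ in the role of the theorem's $A$. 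Your $A=(JJ^\ast)^{1/2}$ is compact from the start, so the hypotheses of Theorem \ref{theoCompact} are matched verbatim. What the paper's route buys in exchange is brevity and a formulation (closed forms, form domains) that generalizes immediately to other Dirichlet-form settings.
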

\begin{proof}
	The proof is inspired from \cite[Section 5]{HP14}. We consider the symmetric form $\mathcal{L}$ on $L^2((0,1)^2,\R^d)$ defined as
	\begin{align*}
		\mathcal{L}(f,g)=\int_0^1\int_0^1f(s,t)\cdot g(s,t)\mathrm{d}s\mathrm{d}t+\int_0^1\int_0^1\int_0^1\int_0^1\frac{(f(s,t)-f(\ps,\tp))\cdot(g(s,t)-g(\ps,\tp))}{(|s-\ps|+|t-\tp|)^{2+2\beta}}\mathrm{d}s\mathrm{d}\ps\mathrm{d}t\mathrm{d}\tp
	\end{align*}
	for functions $f$, $g$ in the dense domain $ \mathcal{D}(\mathcal{L})\subset L^2((0,1)^2,\R^d)$ and a fixed $\beta\in(0,1)$, where
	\begin{align*}
		\mathcal{D}(\mathcal{L})=\Big\{g:\,\Vert g\Vert^2_{L^2((0,1)^2,\R^d)}+\int_0^1\int_0^1\int_0^1\int_0^1\frac{|g(s,t)-g(\ps,\tp)|^2}{(|s-\ps|+|t-\tp|)^{2+2\beta}}\mathrm{d}s\mathrm{d}\ps\mathrm{d}t\mathrm{d}\tp<\infty\Big\}.
	\end{align*}
	Then $\mathcal{L}$ is a positive symmetric closed form and, by Kato's first representation theorem (see e.g. \cite{Ka13}), one can find a positive self-adjoint operator $T_{\mathcal{L}}$ such that
	\begin{align*}
		\mathcal{L}(f,g)=\langle f,T_{\mathcal{L}}g\rangle_{L^2((0,1)^2,\R^d)}
	\end{align*}
	for all $g\in\mathcal{D}(T_{\mathcal{L}})$ and $f\in\mathcal{D}(\mathcal{L})$. Further, one may observe that the form $\mathcal{L}$ is bounded from below by a positive number. Indeed, 
	\begin{align}\label{EqOpMatcalE}
		\mathcal{L}(g,g)\geq\Vert g\Vert_{L^2((0,1)^2,\R^d)}
	\end{align}
	for all $g\in\mathcal{D}(\mathcal{L})$. Hence, we have that $\mathcal{D}(\mathcal{L})=\mathcal{D}(T^{1/2}_{\mathcal{L}})$. 
	
	Now, define the operator $A$ as $A=T^{1/2}_{\mathcal{L}}$. It follows from Lemma 1 in \cite[Section 1]{Le82} (see also \cite[Lemma 9]{HP14}) and Lemma \ref{lemCompact} applied to $p=2$, $U=(0,1)^2$ that $A$ has a discrete spectrum and a compact inverse $A^{-1}$. Then, using \eqref{EqSeqXn1}-\eqref{EqOpMatcalE}, the operator $A$ and the sequence $(X^{(n)},n\in\N)$ satisfy the assumptions of Theorem \ref{theoCompact}.
\end{proof}

\subsection{Malliavin differentiability for bounded drifts}
In this subsection, we assume in addition the drift $b$ is bounded. The main result of this subsection which significantly generalised those in \cite{NuSa89} when the diffusion is constant is the following
\begin{theorem}\label{themmalldiff1}
	The strong solution $\{X^{\xi}_{s,t},\,s,t\in[0,1]\}$ of the SDE \eqref{Eqmainhpde5} is Malliavin differentiable. 
\end{theorem}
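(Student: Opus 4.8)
The plan is to realise $X^{\xi}$ as a strong $L^2(\Omega)$-limit of solutions driven by smoothed drifts and then to push Malliavin differentiability onto the limit via the compactness criterion of Corollary \ref{CorolCompact} together with the closedness of the derivative operator (\cite[Lemma 1.2.3]{Nu06}). First I would mollify the drift: choose $b_n\in\mathcal{C}\big([0,1]^2,\mathcal{C}^\infty_b(\R^d)\big)$ with $\Vert b_n\Vert_{\infty}\leq\Vert b\Vert_{\infty}$, respecting the decomposition into componentwise nondecreasing parts, and with $b_n\to b$ almost everywhere. For each $n$ the SDE \eqref{Eqmainhpde5} driven by $b_n$ has a unique strong solution $X^n$ which, by the smooth-coefficient theory of Nualart and Sanz \cite{NuSa85,NuSa89}, belongs to $\mathbb{D}_{2,1}$ and whose derivative solves the linear integral equation
\begin{align*}
	D_{\ps,\tp}X^n_{s,t}=\mathbf{1}_{[0,s]\times[0,t]}(\ps,\tp)\,\mathrm{Id}+\int_{\ps}^s\int_{\tp}^t\nabla_x b_n(s_1,t_1,X^n_{s_1,t_1})\,D_{\ps,\tp}X^n_{s_1,t_1}\,\mathrm{d}t_1\mathrm{d}s_1.
\end{align*}

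Second, I would establish the strong $L^2(\Omega)$ convergence $X^n_{s,t}\to X_{s,t}$. The estimates of Section \ref{sec2} are the right tool here: combining the local time–space integration formula \eqref{eq:EisenSheetdD01} with the exponential bound of Proposition \ref{prop:DavieSheet1dd}, one controls the differences uniformly so that $(X^n_{s,t})_n$ is relatively compact in $L^2(\Omega,\R^d)$ (alternatively this compactness comes directly from Corollary \ref{CorolCompact} once the derivative bounds below are in hand). Every subsequential limit solves \eqref{Eqmainhpde5}, and by the strong uniqueness established in Theorem \ref{mainres1} it must coincide with $X_{s,t}$; hence the whole sequence converges.

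The decisive step, and the main obstacle, is the uniform control of the Malliavin derivatives, i.e. verifying hypotheses \eqref{EqSeqXn1}--\eqref{EqSeqXn2} of Corollary \ref{CorolCompact}. The difficulty is that $\nabla_x b_n$ is \emph{not} bounded uniformly in $n$, so a naive Gronwall argument applied to the linear derivative equation fails. Instead I would expand $D_{\ps,\tp}X^n_{s,t}$ as a Neumann (iterated-integral) series in $\nabla_x b_n$ evaluated along $X^n$, and estimate each term by pairing Cauchy--Schwarz with the integration-by-parts identity \eqref{eq:EisenSheetdD01}, which transfers each spatial derivative off $b_n$ onto the sheet and leaves only the uniformly bounded $b_n$ together with stochastic integrals. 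The exponential integrability of Proposition \ref{prop:DavieSheet1dd} then yields bounds on the iterated integrals that are summable and independent of $n$, giving the $L^2(\Omega)$ bound on $X^n_{s,t}$ and the uniform bound $\sup_n\E\big[\int_0^1\int_0^1|D_{\ps,\tp}X^n_{s,t}|^2\,\mathrm{d}\ps\mathrm{d}\tp\big]<\infty$. The fractional Sobolev bound \eqref{EqSeqXn2} follows similarly by estimating the increments of the derivative in its index point $(\ps,\tp)$: the leading indicator term lies in $\mathbb{G}^2_{\beta}$ for $\beta\in(0,1/2)$, while the Hölder-type increments of the iterated integrals are again controlled by the same exponential estimates.

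Finally, with $X^n_{s,t}\to X_{s,t}$ in $L^2(\Omega)$ and $\sup_n\Vert D_{\cdot,\cdot}X^n_{s,t}\Vert_{L^2(\Omega;H)}<\infty$, the closedness of the Malliavin derivative (\cite[Lemma 1.2.3]{Nu06}) yields $X_{s,t}\in\mathbb{D}_{2,1}$ together with the weak convergence $D X^n_{s,t}\to D X_{s,t}$ in $L^2(\Omega;H)$. As $(s,t)\in[0,1]^2$ is arbitrary, the family $\{X^{\xi}_{s,t}\}$ is Malliavin differentiable, which is the assertion of the theorem.
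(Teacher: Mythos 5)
Your overall architecture (mollification respecting the monotone decomposition, uniform $L^2$ bounds on the approximating solutions and their derivatives, the compactness criterion of Corollary \ref{CorolCompact}, and closedness of the derivative via \cite[Lemma 1.2.3]{Nu06}) is exactly the paper's. The genuine gap is in your ``decisive step''. You dismiss the Gronwall route too quickly: the whole point of the monotone decomposition $b_n=\hat{b}_n-\check{b}_n$ is that the partial derivatives $\partial_i\hat{b}_{j,n}$ and $\partial_i\check{b}_{j,n}$ are \emph{nonnegative}, so the two-parameter Gronwall lemma (\cite[Lemma 5.1.1]{Qi16}) applied to \eqref{eqmalder1} yields the pathwise bound
\begin{align*}
\Vert D_{r,u}X^n_{s,t}\Vert\leq\exp\Big(\int_r^s\int_u^t\sum\limits_{i,j=1}^d\big\{\partial_i\hat{b}_{j,n}+\partial_i\check{b}_{j,n}\big\}(s_1,t_1,X^n_{s_1,t_1})\,\mathrm{d}t_1\mathrm{d}s_1\Big),
\end{align*}
with no uniform bound on $\nabla_x b_n$ required. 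Girsanov (with the uniform bound $C_0$ of \eqref{EqUnifL2BigEcalBound}) then transfers the estimate from $X^n$ to the sheet $W$, and the exponential moment bound of Lemma \ref{lemMalEstimate1} --- proved via the local time--space formula \eqref{eq:EisenSheetdD01} applied to a \emph{single} integral of $\partial_i\hat{b}_{j,n}(\cdot,\cdot,W)$ --- gives a bound depending only on $\Vert\hat{b}\Vert_{\infty}+\Vert\check{b}\Vert_{\infty}$. Your Neumann-series substitute, by contrast, rests on tools that do not exist here: \eqref{eq:EisenSheetdD01} and Proposition \ref{prop:DavieSheet1dd} handle one integral of one gradient of a bounded function, not $m$-fold iterated integrals of products of gradients over chains in the two-parameter partial order; the summability in $m$ you assert is precisely the heavy combinatorial work (shuffle-type expansions and explicit Gaussian estimates) carried out in \cite{MMNPZ13} for one-parameter SDEs and never established for the Brownian sheet. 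Moreover your series is evaluated along $X^n$, so Girsanov must be invoked before any sheet-based identity can be used, and you never do so. As written, the decisive step would not go through.

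A second, smaller gap is the identification of the limit. Your claim that ``every subsequential limit solves \eqref{Eqmainhpde5}'' is unsupported: $b_n\to b$ only a.e.\ and $X^n$ converges only in $L^2(\Omega)$ along a subsequence, so convergence of $\int_0^t\int_0^s b_n(s_1,t_1,X^n_{s_1,t_1})\,\mathrm{d}s_1\mathrm{d}t_1$ to the corresponding integral with $b$ and the limit process does not follow. The paper avoids this entirely: Lemma \ref{thml2con2} proves weak $L^2$ convergence of $h(X^{\xi,n}_{s,t})$ to $h(X^{\xi}_{s,t})$, where $X^{\xi}$ is the strong solution already produced by Theorem \ref{mainres1}, via a Cameron--Martin translation and Girsanov computation that reduces everything to integrals against the Gaussian density of the sheet, where $b_n\to b$ a.e.\ and dominated convergence apply; relative compactness plus uniqueness of limits then upgrades this to strong convergence of the full sequence (Theorem \ref{thml2con1}). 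Your step 2 can be repaired along these lines, but not by the argument you give.
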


The proof of this theorem is done is two steps:

\textbf{Step 1:} We use standard approximation  procedure to approximate the drift coefficient $b=\hat{b}-\check{b}$ by a sequence of functions 
$$b_n:=\hat{b}_{n}-\check{b}_{n}, n\geq 1$$
such that $\hat{b}_{n}=(\hat{b}_{1,n},\ldots,\hat{b}_{d,n})$, $\check{b}_{n}=(\check{b}_{1,n},\ldots,\check{b}_{d,n})$, $(\hat{b}_{j,n})_{n\geq 1}$ and $(\check{b}_{j,n})_{n\geq 1}$ are smooth and componentwise non-decreasing functions with $\sup_n\|\hat{b}_{j,n}\|_\infty\leq \|\hat{b}_j\|_\infty<\infty $ and $\sup_n\|\check{b}_{j,n}\|_\infty\leq \|\check{b}_j\|_\infty<\infty $. In addition,  $(\hat{b}_{n})_{n\geq 1}$ (respctively, $(\check{b}_{n})_{n\geq 1}$) converges to  $\hat{b}$ (respctively, $\check{b}$) in $(s,t,x) \in [0,1]^2\times \R^d$ $\mathrm{d}s\times\mathrm{d}t\times\mathrm{d}x$-a.e. We know that for such smooth drift coefficients, the corresponding SDEs have a unique strong solution denoted by $X^{\xi,n}$. We then show that for each $s,t\in [0,1]$, the sequence $(X^{\xi,n}_{s,t})_{n \geq 1}$ is relatively compact in $L^2(\Omega,\Pb;\R^d)$.

\textbf{Step 2:} We show that the the sequence of solution $(X^{\xi,n})_{n \geq 1}$ converges strongly in $L^2(\Omega,\Pb;\R^d)$.

From \textbf{Step 1}  and \textbf{Step 2}, the result will follow by application of a compactness criteria \cite[Lemma 1.2.3]{Nu06}.

The next result corresponds to a $L^2(\Omega)$ compactness criteria. It is analogous to the result derived in  \cite{MMNPZ13} for the case of Brownian motion.
\begin{theorem}\label{thmcompact1}
	For every $(s,t)\in\Gamma_0$, the sequence $(X^{\xi,n}_{s,t})_{n \geq 1}$, is relatively compact in $L^2(\Omega,\Pb;\R^d)$.
\end{theorem}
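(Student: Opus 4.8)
The plan is to establish the relative compactness of $(X^{\xi,n}_{s,t})_{n\geq1}$ in $L^2(\Omega,\Pb;\R^d)$ by verifying the hypotheses of the compactness criterion in Corollary \ref{CorolCompact}. That is, I need to produce uniform (in $n$) bounds on three quantities: the $L^2$-norms of $X^{\xi,n}_{s,t}$, the $L^2$-norms of the Malliavin derivatives $D_{\cdot,\cdot}X^{\xi,n}_{s,t}$, and the fractional Sobolev seminorm of the map $(\ps,\tp)\mapsto D_{\ps,\tp}X^{\xi,n}_{s,t}$ appearing in \eqref{EqSeqXn2}. Since each $b_n$ is smooth, the solution $X^{\xi,n}$ is genuinely Malliavin differentiable and its derivative satisfies a linear (matrix-valued) integral equation obtained by formally differentiating \eqref{Eqmainhpde5}; writing this equation explicitly is the natural starting point.

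First I would record the derivative equation. Differentiating the approximating SDE, for $(\ps,\tp)$ in the rectangle $[0,s]\times[0,t]$ one gets
\begin{align*}
	D_{\ps,\tp}X^{\xi,n}_{s,t}=I_d+\int_{\tp}^t\int_{\ps}^s\nabla_x b_n\big(s_1,t_1,X^{\xi,n}_{s_1,t_1}\big)\,D_{\ps,\tp}X^{\xi,n}_{s_1,t_1}\,\mathrm{d}s_1\,\mathrm{d}t_1,
\end{align*}
with $D_{\ps,\tp}X^{\xi,n}_{s,t}=0$ when $(\ps,\tp)\notin[0,s]\times[0,t]$. The first bound $\sup_n\|X^{\xi,n}_{s,t}\|_{L^2}<\infty$ is routine from the linear growth of $b$ together with $L^2$-boundedness of the Brownian sheet (Gronwall on the plane). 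The crux is the second and third bounds. Here the difficulty is that a naive Gronwall estimate forces control of $\nabla_x b_n$, which is not bounded uniformly in $n$ since $b$ is only measurable. The plan is to avoid pointwise derivative bounds and instead exploit the integrated estimates of Proposition \ref{prop:DavieSheet1dd} and Corollary \ref{corol:DavieSheet1dds1}: the exponential integrability of the averaged gradient $\int_0^1\int_0^1\nabla_y b_n(s,t,\widetilde{W}^{\ve,\vep}_{s,t})\,\mathrm{d}t\,\mathrm{d}s$ (uniform in $n$ because $\|b_n\|_\infty\leq\|b\|_\infty$) is precisely what lets one bound the iterated integrals appearing after expanding the linear equation for $D_{\ps,\tp}X^{\xi,n}$ as a Neumann/Picard series.

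Concretely, I would iterate the derivative equation to write $D_{\ps,\tp}X^{\xi,n}_{s,t}$ as a sum of multiple integrals of products $\prod\nabla_x b_n(s_j,t_j,X^{\xi,n}_{s_j,t_j})$, and estimate the $L^2$ (or higher) moments of each term by combining a change of variables aligning $X^{\xi,n}$ with the shifted sheet $\widetilde{W}^{\ve,\vep}$, the Cauchy--Schwarz inequality, and the uniform exponential moment bound \eqref{eq:DavieSheet0dd}; summing the resulting series gives a finite bound independent of $n$, yielding \eqref{EqSeqXn1}. For \eqref{EqSeqXn2} I would estimate $\E\|D_{s,t}X^{\xi,n}-D_{\ps,\tp}X^{\xi,n}\|^2$ by splitting the difference of the two rectangular integral equations into the mismatch of integration domains plus a Gronwall-type term; the domain-mismatch piece produces a factor controlled by $(|s-\ps|+|t-\tp|)$ to a positive power via the same averaging estimates, and one chooses $\beta\in(0,1)$ small enough that the singular kernel $(|s-\ps|+|t-\tp|)^{-2-2\beta}$ remains integrable against this factor. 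The main obstacle throughout is organizing these estimates so that all constants are genuinely uniform in $n$—this is exactly where the boundedness and monotonicity hypotheses on $b$, transmitted to $b_n$, must be used through Lemmas \ref{lem:PseudoMetric1a}--\ref{lem:PseudoMetric3} and the Davie-type bound rather than through any derivative control, since $\nabla_x b_n$ blows up as $n\to\infty$.
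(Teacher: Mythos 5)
Your proposal is correct in outline and follows the same architecture as the paper: verify the hypotheses of Corollary \ref{CorolCompact} by establishing uniform $L^2$ bounds on $X^{\xi,n}_{s,t}$, uniform bounds on $D_{r,u}X^{\xi,n}_{s,t}$ via the linear equation \eqref{eqmalder1}, and a H\"older-in-$(r,u)$ estimate for the derivative, all while avoiding any pointwise control of $\nabla_x b_n$. The differences are in the two technical engines. First, where you expand a Neumann/Picard series and sum it, the paper applies a two-parameter Gronwall inequality (\cite[Lemma 5.1.1]{Qi16}) to get
\begin{align*}
\Vert D_{r,u}X^n_{s,t}\Vert\leq\exp\Big(\int_r^s\int_u^t\sum\limits_{i,j=1}^d\big\{\partial_i\hat{b}_{j,n}+\partial_i\check{b}_{j,n}\big\}(s_1,t_1,X^n_{s_1,t_1})\,\mathrm{d}t_1\mathrm{d}s_1\Big);
\end{align*}
these are equivalent, but note that summing your series \emph{requires} the nonnegativity of the entries $\partial_i\hat{b}_{j,n},\partial_i\check{b}_{j,n}$ (coming from componentwise monotonicity of the approximations), which lets you dominate each ordered matrix-product chain integral by $\frac{1}{m!}\big(\int_r^s\int_u^t f_n\big)^m$ with $f_n=\sum_{i,j}(\partial_i\hat{b}_{j,n}+\partial_i\check{b}_{j,n})\geq0$; without this sign structure you would face $|\partial_i b_{j,n}|$, which no averaging argument controls, so this point should be made explicit rather than left implicit. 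Second, for the exponential moments of the averaged gradient you invoke Proposition \ref{prop:DavieSheet1dd}/\eqref{eq:DavieSheet0dd}, whereas the paper proves a bespoke estimate (Lemma \ref{lemMalEstimate1}) directly from the local time-space integration formula \eqref{eq:EisenSheetdD01}, which eliminates the derivative by stochastic integration against the sheet and its time reversal. Your substitution is viable but needs two bridging steps the paper's route avoids: \eqref{eq:DavieSheet0dd} concerns the increment sheet $\widetilde{W}^{\ve,\vep}$, so after Girsanov you must condition on the edge processes $W_{\cdot,u}$, $W_{r,\cdot}$ (which are independent of the rectangular increments) and absorb them as a spatial shift of $b_n$ that preserves $\Vert b_n\Vert_\infty$; and since $\alpha$ in \eqref{eq:DavieSheet0dd} is fixed while the H\"older argument needs exponential moments with large constants (the paper uses $8d^2$ and $32d^2$), you must trade the square for linearity via $k|x|/\sqrt{\ve\vep}\leq k^2/(4\alpha)+\alpha|x|^2/(\ve\vep)$. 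What your route buys is economy (a black-box citation); what the paper's buys is explicit constants $\widetilde{C}_1(k,\Vert b\Vert_\infty)$ and a self-contained Section 3. Two smaller corrections: your appeal to Lemmas \ref{lem:PseudoMetric1a}--\ref{lem:PseudoMetric3} is misplaced, as those pathwise averaging bounds serve only the path-by-path uniqueness argument and play no role in the Malliavin estimates; and the Girsanov step costs a stochastic exponential factor whose second moment must itself be bounded uniformly in $n$ (the constant $C_0$ of \eqref{EqUnifL2BigEcalBound}, Remark \ref{remUnifL2ExpBound}), which your ``change of variables'' should acknowledge.
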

The proof of the above theorem uses Corollary \ref{CorolCompact}, which in our case is reduced to proving:

\begin{lemma}\label{thmcompact2}
	There exists $C_1>0$ such that for every $(s,t)\in\Gamma_0$, the sequence $(X^{\xi,n}_{s,t} )_{n\geq1}$   satisfies
	\begin{align}\label{eqcompact3}
		\sup_{n\geq1} \Vert  X^{\xi,n}_{s,t}\Vert_{L^2(\Omega,\Pb;\R^d)}^2  \leq  C_1
	\end{align}
	and
	\begin{align}\label{eqcompact2}
		\sup_{n\geq1}\,	\sup_{
			\begin{subarray}{c}
				0 \leq r \leq s\\
				0 \leq u \leq t
		\end{subarray}} \E \left[ \Vert D_{r,u}X^{\xi,n}_{s,t}\Vert^2 \right] \leq  C_1.
	\end{align}
	Moreover for all $0 \leq r',r \leq s,\,\,0 \leq u',u \leq t$,
	\begin{align}\label{eqcompact1}
		\E \left[ \Vert D_{r,u}X^{\xi,n}_{s,t} - D_{r',u'}X^{\xi,n}_{s,t} \Vert^2 \right] \leq C_1(|r -r'|^{} +|u -u'|^{}),
	\end{align}
	where $\Vert\cdot\Vert$ denotes the max norm.
\end{lemma}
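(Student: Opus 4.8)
The plan is to derive the three estimates by writing down the SDE satisfied by the Malliavin derivative $D_{r,u}X^{\xi,n}_{s,t}$ and treating it as a linear integral equation in the plane, then applying a two-parameter Gronwall-type argument together with the exponential integrability furnished by Proposition \ref{prop:DavieSheet1dd}. First I would establish \eqref{eqcompact3}: since each $b_n$ is smooth with $\sup_n\|b_n\|_\infty<\infty$ (indeed $b$ is bounded in this subsection), the moment bound on $X^{\xi,n}_{s,t}$ follows by squaring the integral equation \eqref{Eqmainhpde5}, using $|b_n|\leq M$, the boundedness of $\xi$ on the boundary, and the fact that $\E[|W_{s,t}|^2]=st\leq 1$; no delicate input is needed here beyond a direct estimate of each term.

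For \eqref{eqcompact2}, I would differentiate the equation for $X^{\xi,n}_{s,t}$ in the Malliavin sense. Because the noise enters additively, for $(r,u)\preceq(s,t)$ the derivative solves
\begin{align*}
	D_{r,u}X^{\xi,n}_{s,t}=\If_d+\int_u^t\int_r^s\nabla_x b_n(s_1,t_1,X^{\xi,n}_{s_1,t_1})\,D_{r,u}X^{\xi,n}_{s_1,t_1}\,\mathrm{d}s_1\,\mathrm{d}t_1,
\end{align*}
where $\If_d$ is the identity matrix coming from $D_{r,u}W_{s,t}=\If_d\mathbf{1}_{[0,s]\times[0,t]}(r,u)$, and the derivative vanishes unless $(r,u)\preceq(s,t)$. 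The main obstacle is controlling the random matrix coefficient $\nabla_x b_n$: it is unbounded as $n\to\infty$, so a naive Gronwall bound in $\omega$ diverges. The key idea, exactly as in \cite{MMNPZ13,BDM21b}, is to avoid pointwise control and instead bound the $L^2(\Omega)$-norm by expanding the solution of the linear equation as a Picard/Neumann series and estimating each multiple integral term through the exponential moment bound \eqref{eq:DavieSheet0dd}. That is, I would write $D_{r,u}X^{\xi,n}_{s,t}=\sum_{m\geq0}\Ibf_m$ where $\Ibf_m$ is the $m$-fold iterated integral of products of $\nabla_x b_n$ along the sheet, and bound $\E[\Vert\Ibf_m\Vert^2]$ using Cauchy–Schwarz together with Proposition \ref{prop:DavieSheet1dd}; the exponential integrability of the averaged gradient yields summable factorial-type bounds $c^m/(m!)^{\kappa}$ for some $\kappa>0$, so the series converges in $L^2(\Omega)$ uniformly in $n$ and in $(r,u)$, giving \eqref{eqcompact2}.

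For the Hölder-type increment bound \eqref{eqcompact1}, I would subtract the integral equations for $D_{r,u}X^{\xi,n}_{s,t}$ and $D_{r',u'}X^{\xi,n}_{s,t}$. The difference of the two source terms is governed by the difference of the domains of integration $[r,s]\times[u,t]$ versus $[r',s]\times[u',t]$, whose symmetric difference has area of order $|r-r'|+|u-u'|$; integrating the (uniformly $L^2$-bounded) integrand over this thin region and again using Cauchy–Schwarz with the exponential moment bound produces the factor $|r-r'|+|u-u'|$, while the remaining homogeneous part is reabsorbed by the same series argument as above. Throughout, the monotone decomposition $b_n=\hat b_n-\check b_n$ only matters insofar as it guarantees the approximating coefficients inherit the sign structure and uniform $L^\infty$ bounds; the estimates themselves rely solely on boundedness and on the averaging estimate of Proposition \ref{prop:DavieSheet1dd}, which I expect to be the technical heart of the argument.
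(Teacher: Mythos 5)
Your overall strategy coincides with the paper's: both start from the linear equation \eqref{eqmalder1} for $D_{r,u}X^{\xi,n}_{s,t}$, identify the blow-up of $\nabla b_n$ as $n\to\infty$ as the obstacle, and resolve it through exponential moments of space-averaged gradients (your direct estimate for \eqref{eqcompact3} is correct and in fact more elementary than the paper's Girsanov argument). The paper differs only in mechanics: instead of a Neumann series it applies a two-parameter Gronwall lemma (\cite[Lemma 5.1.1]{Qi16}) to obtain the pathwise bound $\Vert D_{r,u}X^n_{s,t}\Vert\leq\exp\big(\int_r^s\int_u^t\sum_{i,j}\{\partial_i\hat b_{j,n}+\partial_i\check b_{j,n}\}(s_1,t_1,X^n_{s_1,t_1})\,\mathrm{d}t_1\mathrm{d}s_1\big)$, which is essentially the summed version of your series. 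Note, however, that monotonicity is far more central than your closing remark suggests. Your chain-domain comparison $\int_{\mathrm{chains}}\prod_i a\leq (m!)^{-1}\big(\int\int a\big)^m$ requires $a\geq0$, so you must majorize $\Vert\nabla b_n\Vert$ by a nonnegative scalar; but the averaging estimates only control \emph{signed} integrals of the form $\int\int\partial_i\hat b_{j,n}(s_1,t_1,W_{s_1,t_1})\,\mathrm{d}t_1\mathrm{d}s_1$ (the derivative is removed by integration by parts against the sheet), never integrals of $|\partial_i b_{j,n}|$. It is precisely the inequalities $\partial_i\hat b_{j,n}\geq0$, $\partial_i\check b_{j,n}\geq0$ that make $\Vert\nabla b_n\Vert\leq\sum_{i,j}(\partial_i\hat b_{j,n}+\partial_i\check b_{j,n})$ a bound by quantities to which those estimates apply.

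Beyond this, there are two genuine gaps. First, your iterated integrals $\Ibf_m$ involve $\nabla b_n$ evaluated along the solution $X^{\xi,n}$, whereas Proposition \ref{prop:DavieSheet1dd} bounds gradients evaluated along the Brownian sheet; you never pass from one to the other. The paper does this via the Girsanov theorem: after squaring the Gronwall bound it inserts the stochastic exponential $\mathcal{E}\big(\int_0^1\int_0^1 b_n(s_1,t_1,W_{s_1,t_1})\cdot\mathrm{d}W_{s_1,t_1}\big)$, controls it in $L^2$ uniformly in $n$ (the constant $C_0$ of \eqref{EqUnifL2BigEcalBound}, Remark \ref{remUnifL2ExpBound}), and uses Cauchy-Schwarz and H\"older to reduce everything to functionals of $W$; without this step your bounds on $\E[\Vert\Ibf_m\Vert^2]$ do not follow. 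Second, even after Girsanov, \eqref{eq:DavieSheet0dd} is not the estimate you need: it concerns the rectangular-increment process $\widetilde{W}^{\ve,\vep}$ rather than $W$ itself, and for \eqref{eqcompact1} one needs exponential moments over thin rectangles such as $[r',r]\times[u,t]$, normalized by $\delta(r',r)\delta(u,t)$. That statement is Lemma \ref{lemMalEstimate1}, which the paper proves separately from the local time-space integration formula \eqref{eq:EisenSheetdD01} together with Barlow-Yor and a series expansion of the exponential. The factor $|r-r'|+|u-u'|$ in \eqref{eqcompact1} is then extracted quantitatively by combining this normalized bound with the inequality $x^8\leq 8!\,e^x$; your heuristic that the "area of the symmetric difference" produces this factor is not sufficient on its own, because the integrand on the thin strips is not uniformly bounded in $n$.
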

\begin{remark}
	If \eqref{eqcompact1} is satisfied, then for any $\beta\in(0,1/2)$, \eqref{EqSeqXn2} holds. Indeed, since, for any $s$, $\ps$, $t$, $\tp$, one has
	\begin{align*}
		|s-\ps|^{1/2}|t-\tp|^{1/2}\leq\frac{1}{2}(|s-\ps|+|t-\tp|),
	\end{align*}
	then for any $n\in\N$,
	\begin{align*}
		&\int_0^1\int_0^1\int_0^1\int_0^1\frac{\E\left[\Vert D_{s,t}X^{(n)}-D_{\ps,\tp}X^{(n)}\Vert^2\right]}{(|s-\ps|+|t-\tp|)^{2+2\beta}}\mathrm{d}s\mathrm{d}\ps\mathrm{d}t\mathrm{d}\tp\\\leq& C_1\int_0^1\int_0^1\int_0^1\int_0^1\frac{\mathrm{d}s\mathrm{d}\ps\mathrm{d}t\mathrm{d}\tp}{(|s-\ps|+|t-\tp|)^{1+2\beta}}\leq\dfrac{C_1}{2}\Big(\int_0^1\int_0^1\frac{\mathrm{d}s\mathrm{d}\ps}{|s-\ps|^{1/2+\beta}}\Big)\Big(\int_0^1\int_0^1\frac{\mathrm{d}t\mathrm{d}\tp}{|t-\tp|^{1/2+\beta}}\Big)<\infty.
	\end{align*}
\end{remark}
\noindent
To prove the result above, we need some preliminary estimates.
\begin{lemma}\label{lemMalEstimate1}
	There exists a non-decreasing function $\widetilde{C}_1:\,\R_+\times\R_+\to\R_+$ such that for any $0< r<s\leq1$, any $0< u<t\leq1$, any $k\in\R_+$ and any $i,j\in\{1,\cdots,d\}$,
	\begin{align}\label{eqDavie1}
		\mathbb{E}\Big[\exp\Big(\frac{k}{\delta(r,s)\delta(u,t)} \int_r^s\int_u^t\partial_i\hat{b}_{j,n}(s_1,t_1,W_{s_1,t_1})\mathrm{d}t_1\mathrm{d}s_1\Big)\Big]\leq \widetilde{C}_1(k,\Vert \hat{b}_{j,n}\Vert_{\infty})
	\end{align}
	and
	\begin{align}\label{eqDavie2}
		\mathbb{E}\Big[\exp\Big(\frac{k}{\delta(r,s)\delta(u,t)} \int_r^s\int_u^t\partial_i\check{b}_{j,n}(s_1,t_1,W_{s_1,t_1})\mathrm{d}t_1\mathrm{d}s_1\Big)\Big]\leq \widetilde{C}_1(k,\Vert \check{b}_{j,n}\Vert_{\infty}),
	\end{align}
	where $\delta(r,s)=\sqrt{s-r}$ and $\delta(u,t)=\sqrt{t-u}$.
\end{lemma}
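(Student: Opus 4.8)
The plan is to establish the exponential integrability bounds \eqref{eqDavie1} and \eqref{eqDavie2} by reducing them directly to Proposition \ref{prop:DavieSheet1dd}. Since the two estimates are symmetric in $\hat{b}_{j,n}$ and $\check{b}_{j,n}$, I would prove \eqref{eqDavie1} and remark that \eqref{eqDavie2} follows identically. The key observation is that $\partial_i \hat{b}_{j,n}$ is a single entry of the gradient $\nabla_y \hat{b}_{j,n}$, so $|\partial_i \hat{b}_{j,n}| \leq |\nabla_y \hat{b}_{j,n}|$ pointwise, and Proposition \ref{prop:DavieSheet1dd} already controls the exponential moment of the square of the full gradient integral over a rescaled box.

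First I would perform the change of variables that maps the rectangle $[r,s]\times[u,t]$ to the unit square $[0,1]^2$. Writing $a = u$, $\pa = r$, $\ve = t-u = \delta(u,t)^2$ and $\vep = s-r = \delta(r,s)^2$, the substitution $s_1 = \pa + \vep \, \sigma$, $t_1 = a + \ve \, \tau$ turns $\int_r^s\int_u^t \partial_i \hat{b}_{j,n}(s_1,t_1,W_{s_1,t_1})\,\mathrm{d}t_1\mathrm{d}s_1$ into $\ve\vep \int_0^1\int_0^1 \partial_i \hat{b}_{j,n}(\pa+\vep\sigma, a+\ve\tau, W_{\pa+\vep\sigma,a+\ve\tau})\,\mathrm{d}\tau\mathrm{d}\sigma$. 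Because $\delta(r,s)\delta(u,t) = \sqrt{\ve\vep}$, the prefactor $\frac{k}{\delta(r,s)\delta(u,t)}$ combined with the Jacobian $\ve\vep$ yields exactly $k\sqrt{\ve\vep}$ times the rescaled integral, matching the scaling $\alpha\vep\ve$ inside the exponential of \eqref{eq:DavieSheet0dd} once one passes to squares.

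The main technical point is to reconcile the first power of the integral appearing in \eqref{eqDavie1} with the squared integral in Proposition \ref{prop:DavieSheet1dd}. Here I would invoke the elementary bound $x \leq 1 + x^2$ (equivalently $e^x \leq e \cdot e^{x^2}$ after adjusting constants), or more carefully split according to whether the rescaled integral is at most $1$ in absolute value. On the event where it is bounded by $1$, the exponential of the first power is trivially bounded by a constant depending only on $k$; on the complementary event, the first power is dominated by the square, so $\exp(c\,|Z|) \leq \exp(c\,|Z|^2)$ and Proposition \ref{prop:DavieSheet1dd} applies after absorbing the normalisation $W \mapsto \widetilde{W}^{\ve,\vep}$, which is precisely the centered increment of the Brownian sheet over the box. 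The requirement $\|\hat{b}_{j,n}\|_\infty \leq 1$ in the proposition is handled by the standard rescaling $\hat{b}_{j,n}/\|\hat{b}_{j,n}\|_\infty$, which is why the resulting constant $\widetilde{C}_1$ depends on $k$ and on $\|\hat{b}_{j,n}\|_\infty$; monotonicity of $\widetilde{C}_1$ in both arguments comes from the fact that enlarging $k$ or the sup-norm only enlarges the exponent.

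The hard part will be verifying that the Gaussian process $\widetilde{W}^{\ve,\vep}$ produced by the change of variables genuinely coincides (in law, and with the correct filtration-compatibility used in the proof of Proposition \ref{prop:DavieSheet1dd}) with the increment process appearing there, so that the constant $\alpha$ and $C$ from that proposition can be applied verbatim. Once this identification is in place, the bound is immediate; I would therefore spend most of the care on the covariance computation showing $\widetilde{W}^{\ve,\vep}_{\sigma,\tau} = W_{\pa+\vep\sigma, a+\ve\tau} - W_{\pa,a+\ve\tau} - W_{\pa+\vep\sigma,a} + W_{\pa,a}$ is the rescaled sheet increment over the box $[r,s]\times[u,t]$, and then conclude by setting $\widetilde{C}_1(k,\cdot)$ equal to the constant $C$ from Proposition \ref{prop:DavieSheet1dd} evaluated at the appropriate $\alpha$-dependent multiple of $k$.
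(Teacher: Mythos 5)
Your route is genuinely different from the paper's: the paper never invokes Proposition \ref{prop:DavieSheet1dd} here, but instead applies the local time--space integration formula \eqref{eq:EisenSheetdD01} (i.e.\ \cite[Corollary 2.3]{BDM21a}) on the rectangle $[r,s]\times[u,t]$ to rewrite $\int_r^s\int_u^t\partial_i\hat{b}_{j,n}(s_1,t_1,W_{s_1,t_1})\,\mathrm{d}t_1\mathrm{d}s_1$ as three derivative-free terms (two stochastic integrals and a kernel term), and then bounds each exponential moment by convexity of $e^{3x}$, Jensen, the Barlow--Yor martingale inequality with an exponential series expansion, and a Gaussian estimate. Your reduction to Proposition \ref{prop:DavieSheet1dd} is in principle viable, but as written it has a genuine gap precisely at the step you call the ``hard part''.

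The gap is this: after your change of variables $s_1=\pa+\vep\sigma$, $t_1=a+\ve\tau$, the integrand is evaluated at $W_{\pa+\vep\sigma,a+\ve\tau}$, whereas Proposition \ref{prop:DavieSheet1dd} concerns integrands evaluated at the box increment $\widetilde{W}^{\ve,\vep}_{\sigma,\tau}$. These two processes do \emph{not} coincide; they differ by the random, $(\sigma,\tau)$-dependent boundary terms
\begin{align*}
W_{\pa+\vep\sigma,a+\ve\tau}-\widetilde{W}^{\ve,\vep}_{\sigma,\tau}=W_{\pa,a+\ve\tau}+W_{\pa+\vep\sigma,a}-W_{\pa,a},
\end{align*}
so the covariance computation you propose would only confirm what $\widetilde{W}^{\ve,\vep}$ is by definition, not the identification your argument needs --- that identification is simply false, and the constants of the proposition cannot be applied ``verbatim''. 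The fix requires an additional idea: the box increment $\widetilde{W}^{\ve,\vep}$ is independent of the boundary processes (they are measurable with respect to the white noise on disjoint regions), so you can condition on the boundary data, freeze the shift $\phi(\sigma,\tau)=W_{\pa,a+\ve\tau}+W_{\pa+\vep\sigma,a}-W_{\pa,a}$, and absorb it into the drift by applying the proposition to $\tilde{b}(\sigma,\tau,y)=\hat{b}_{j,n}(\pa+\vep\sigma,a+\ve\tau,y+\phi(\sigma,\tau))/\Vert\hat{b}_{j,n}\Vert_\infty$; this is legitimate only because the constants $\alpha$, $C$ are uniform over all admissible $b$ with $\Vert b\Vert_\infty\leq1$, and it is the same mechanism that underlies Corollary \ref{corol:DavieSheet1dds1}. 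A second, smaller defect: your passage from the squared integral to the first power via $x\leq1+x^2$ (or the split at threshold $1$) only yields an exponent $k\,\ve\vep|Z|^2$, which Proposition \ref{prop:DavieSheet1dd} controls only when $k\leq\alpha$ (after normalisation, $k\Vert\hat{b}_{j,n}\Vert_\infty^2\leq\alpha$); since the lemma must hold for every $k\in\R_+$, you need Young's inequality instead, $k\sqrt{\ve\vep}\,|Z|\leq \frac{k^2\Vert\hat{b}_{j,n}\Vert_\infty^2}{2\alpha}+\frac{\alpha\,\ve\vep}{2\Vert\hat{b}_{j,n}\Vert_\infty^2}|Z|^2$, which also produces the required monotone constant $\widetilde{C}_1(k,m)=C\,e^{k^2m^2/(2\alpha)}$. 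With these two repairs your argument closes, and it is arguably shorter than the paper's; without them it does not.
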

\begin{proof} 
	We only prove \eqref{eqDavie1} and the proof of \eqref{eqDavie2} follows  anagously. Using integration with respect to local time formula (see e.g. \cite[Corollary 2.3]{BDM21a}), we have
	\begin{align}
		&\int_r^{s}\int_u^{t}\partial_i\hat{b}_{j,n}(s_1,t_1,W_{s_1,t_1})\mathrm{d}t_1\mathrm{d}s_1\nonumber\\
		=&-\int_r^{s}\int_u^{t}b_{j,n}(s_1,t_1,W_{s_1,t_1})\frac{\mathrm{d}_{t_1}W^{(i)}_{s_1,t_1}}{s_1}\mathrm{d}s_1+\int_r^{s}\int_{1-t}^{1-u}b_{j,n}(s_1,1-t_1,W_{s_1,1-t_1})\frac{\mathrm{d}_{t_1}B^{(i)}_{s_1,t_1}}{s_1}\mathrm{d}s_1\label{EqEisenMall1}\\
		&\nonumber-\int_r^{s}\int_{1-t}^{1-u}b_{j,n}(s_1,1-t_1,W_{s_1,1-t_1})\frac{W^{(i)}_{s_1,1-t_1}}{s_1(1-t_1)}\mathrm{d}t_1\mathrm{d}s_1, 
	\end{align}
	where $(B^{(i)}_{s,t},(s,t)\in[0,1]^2)$ is the Brownian sheet given by the following  representation provided by Dalang and Walsh \cite{DaW02}
	\begin{align*}
		W^{(i)}_{s,1-t}=W^{(i)}_{s,1}+B^{(i)}_{s,t}-\int_0^t\frac{W^{(i)}_{s,1-u}}{1-u}\mathrm{d}u.
	\end{align*} 
	Since the function $x\longmapsto e^{3x}$ is convex,
	\begin{align}\label{IneqDWest1}
		&\mathbb{E}\Big[\exp\Big(\frac{k}{\delta(r,s)\delta(u,t)}\Big|\int_r^s\int_u^t\partial_i\hat{b}_{j,n}(s_1,t_1,W_{s_1,t_1})\mathrm{d}t_1\mathrm{d}s_1\Big|\Big)\Big]\nonumber\\
		\leq&\frac{1}{3}\Big\{\mathbb{E}\Big[\exp\Big(\frac{3k}{\delta(r,s)\delta(u,t)}\Big|\int_r^{s}\int_u^{t}\hat{b}_{j,n}(s_1,t_1,W_{s_1,t_1})\frac{\mathrm{d}_{t_1}W^{(i)}_{s_1,t_1}}{s_1}\mathrm{d}s_1\Big|\Big)\Big]\\
		&+ \mathbb{E}\Big[\exp\Big(\frac{3k}{\delta(r,s)\delta(u,t)}\Big|\int_r^{s}\int_{1-t}^{1-u}\hat{b}_{j,n}(s_1,t_1,W_{s_1,1-t_1})\frac{\mathrm{d}_{t_1}B^{(i)}_{s_1,t_1}}{s_1}\mathrm{d}s_1 \Big|\Big)\Big]\nonumber\\
		&+\mathbb{E}\Big[\exp\Big(\frac{3k}{\delta(r,s)\delta(u,t)}\Big|\int_r^{s}\int_{1-t}^{1-u}\hat{b}_{j,n}(s_1,t_1,W_{s_1,1-t_1})\frac{W^{(i)}_{s_1,1-t_1}}{s_1(1-t_1)}\mathrm{d}t_1\mathrm{d}s_1\Big|\Big)\Big]\Big\}=\frac{1}{3}(I_1+I_2+I_3).\nonumber
	\end{align}
	By Jensen inequality
	\begin{align*}
		I_1=&\mathbb{E}\Big[\exp\Big(\frac{3k}{\delta(r,s)\delta(u,t)}\Big|\int_r^{s}\int_u^{t}\hat{b}_{j,n}(s_1,t_1,W_{s_1,t_1})\frac{\mathrm{d}_{t_1}W^{(i)}_{s_1,t_1}}{s_1}\mathrm{d}s_1\Big|\Big)\Big]\\\leq&\int_r^s\mathbb{E}\Big[\exp\Big(\frac{6k(\sqrt{s}-\sqrt{r})}{\delta(r,s)\delta(u,t)}\Big|\int_u^{t}\hat{b}_{j,n}(s_1,t_1,W_{s_1,t_1})\frac{\mathrm{d}_{t_1}W^{(i)}_{s_1,t_1}}{\sqrt{s_1}}\Big|\Big)\Big] \frac{\mathrm{d}s_1}{2\sqrt{s_1}(\sqrt{s}-\sqrt{r})}\\
		\leq&\int_r^s\mathbb{E}\Big[\exp\Big(\frac{6k}{\delta(u,t)}\Big|\int_u^{t}\hat{b}_{j,n}(s_1,t_1,W_{s_1,t_1})\frac{\mathrm{d}_{t_1}W^{(i)}_{s_1,t_1}}{\sqrt{s_1}}\Big|\Big)\Big] \frac{\mathrm{d}s_1}{2\sqrt{s_1}(\sqrt{s}-\sqrt{r})}.
	\end{align*}	
	Since for every $s_1\in[r,s]$, $$\Big(Y_{s_1,t_1}:=\int_{u}^{t_1}\hat{b}_{j,n}(s_1,t_2,W_{s_1,t_2})\frac{\mathrm{d}_{t_2}W^{(i)}_{s_1,t_2}}{\sqrt{s_1}},u\leq t_1\leq t\Big)$$ is a square integrable martingale, it follows from the Barlow-Yor inequality that there exists a positive constant $c_1$ such that for any positive integer $m$ and any $s_1\in[r,s]$,
	\begin{align*}
		\E[|Y_{s_1,t}|^m] \leq&\E\Big[\sup\limits_{u\leq t_1\leq t}|Y_{s_1,t_1}|^m\Big]\leq c_1^m\sqrt{m}^{\,m}\E[\langle Y_{s_1,\cdot}\rangle^{m/2}_t]=c_1^m\sqrt{m}^{\,m}\E\Big[\Big(\int_{u}^{t}\hat{b}^2_{j,n}(s_1,t_1,W_{s_1,t_1})\,\mathrm{d}t_1\Big)^{m/2}\Big]\\ \leq&c_1^m\sqrt{m}^{\,m}\delta(u,t)^{m}\Vert \hat{b}_{j,n}\Vert^m_{\infty}.
	\end{align*}
	Hence, using the following exponential expansion formula,
	\begin{align*}
		\exp\Big(\frac{6k}{\delta(u,t)}|Y_{s_1,t}|\Big)=\sum\limits_{m=0}^{\infty}\frac{6^mk^m|Y_{s_1,t}|^m}{\delta(u,t)^mm!},
	\end{align*}
	we obtain
	\begin{align*}
		I_1\leq&\int_r^s\mathbb{E}\Big[\exp\Big(\frac{6k}{\delta(u,t)}\Big|\int_u^{t}\hat{b}_{j,n}(s_1,t_1,W_{s_1,t_1})\frac{\mathrm{d}_{t_1}W_{s_1,t_1}}{\sqrt{s_1}}\Big|\Big)\Big] \frac{\mathrm{d}s_1}{2\sqrt{s_1}(\sqrt{s}-\sqrt{r})}\\=&\int_r^s\E\Big[\exp\Big(\frac{6k}{\delta(u,t)}|Y_{s_1,t}|\Big)\Big]\frac{\mathrm{d}s_1}{2\sqrt{s_1}(\sqrt{s}-\sqrt{r})}=\int_r^s\sum\limits_{m=0}^{\infty}\frac{6^mk^m\E[|Y_{s_1,t}|^m]}{\delta(u,t)^mm!}\,\frac{\mathrm{d}s_1}{2\sqrt{s_1}(\sqrt{s}-\sqrt{r})}\\ \leq&\sum\limits_{m=0}^{\infty}\frac{6^mk^mc_1^m\sqrt{m}^{\,m}\Vert \hat{b}_{j,n}\Vert^m_{\infty}}{m!}:=\widetilde{C}_{1,1}(k,\Vert \hat{b}_{j,n}\Vert_{\infty}),
	\end{align*}
	which is finite by ratio test. Similarly, we also have
	\begin{align*}
		I_2\leq\widetilde{C}_{1,1}(k,\Vert \hat{b}_{j,n}\Vert_{\infty}).
	\end{align*}
	To estimate $I_3$ we apply one more Jensen inequality and obtain
	\begin{align*}
		I_3=& \mathbb{E}\Big[\exp\Big(\frac{3k}{\delta(r,s)\delta(u,t)}\Big|\int_r^{s}\int_{1-t}^{1-u}\hat{b}_{j,n}(s_1,t_1,W_{s_1,1-t_1})\frac{W^{(i)}_{s_1,1-t_1}}{s_1(1-t_1)}\mathrm{d}t_1\mathrm{d}s_1\Big|\Big)\Big]\\
		\leq&\int_r^{s}\int_{1-t}^{1-u}\mathbb{E}\Big[\exp\Big(\frac{12k(\sqrt{s}-\sqrt{r})(\sqrt{1-u}-\sqrt{1-t})}{\delta(r,s)\delta(u,t)}|\hat{b}_{j,n}(s_1,t_1,W_{s_1,1-t_1})|\Big|\frac{W^{(i)}_{s_1,1-t_1}}{\sqrt{s_1}\sqrt{1-t_1}}\Big|\Big)\Big]\\&\qquad\qquad\times\frac{\mathrm{d}t_1}{2\sqrt{1-t_1}(\sqrt{1-u}-\sqrt{1-t})}\frac{\mathrm{d}s_1}{2\sqrt{s_1}(\sqrt{s}-\sqrt{r})}\\
		\leq&\int_r^{s}\int_{1-t}^{1-u}\mathbb{E}\Big[\exp\Big(12k\Vert \hat{b}_{j,n}\Vert_{\infty} \Big|\frac{W^{(i)}_{s_1,1-t_1}}{\sqrt{s_1}\sqrt{1-t_1}}\Big|\Big)\Big]\frac{\mathrm{d}t_1}{2\sqrt{1-t_1}(\sqrt{1-u}-\sqrt{1-t})}\frac{\mathrm{d}s_1}{2\sqrt{s_1}(\sqrt{s}-\sqrt{r})}\\
		\leq&2\exp\Big(72k^2\Vert \hat{b}_{j,n}\Vert^2_{\infty}\Big):=\widetilde{C}_{1,2}(k,\Vert \hat{b}_{j,n}\Vert_{\infty}),
	\end{align*}
	since $\Big|\frac{W^{(i)}_{s_1,1-t_1}}{\sqrt{s_1}\sqrt{1-t_1}}\Big|$ is a reduced Gaussian random variable.\\ The proof is completed by choosing $\widetilde{C}_1=2\widetilde{C}_{1,1}+\widetilde{C}_{1,2}$.
\end{proof} 

\begin{proof}[Proof of Lemma \ref{thmcompact2}] 
	Without loss of generality, we suppose $\xi=0$. Further we set $X^n_{s,t}:=X^{0,n}_{s,t}$. We start with the proof of \eqref{eqcompact2}. Suppose  for every $j\in\{1,\ldots,d\}$, $\hat{b}_{j,n}$ (respectively, $\check{b}_{j,n}$) is componentwise nondecreasing and continuously differentiable with bounded derivatives.
	Then for any $(r,u)\in[0,1]^2$ with $(0,0)\prec(r,u)\prec(s,t)$, 
	\begin{align}\label{eqmalder1}
		D_{r,u}X^n_{s,t}=I_d+\int_r^s\int_u^t\nabla b_n^{}(s_1,t_1,X^n_{s_1,t_1})\,D_{r,u}X^n_{s_1,t_1}\mathrm{d}t_1\mathrm{d}s_1,
	\end{align}
	where $I_d$ is the identity matrix, $\nabla b_n=(\partial_ib_{j,n})_{1\leq i,j\leq d}$, $\partial_ib_{j,n}$ being the partial derivative of $b_j(s,t,\cdot)$ with respect to $x_i$,
	and $D_{r,u}X_{s,t}^n=(D_{r,u}^iX_{s,t}^{n,j})_{1\leq i,j\leq d}$. Since $\partial_i\hat{b}_{j,n}$ and $\partial_i\check{b}_{j,n}$ are \textcolor{black}{nonnegative}, we have
	\begin{align*}
		\Vert D_{r,u}X^n_{s,t}\Vert\leq1+\int_r^s\int_u^t\sum\limits_{i,j=1}^d\Big\{\partial_i\hat{b}_{j,n}^{}(s_1,t_1,X^n_{s_1,t_1})+\partial_i\check{b}_{j,n}^{}(s_1,t_1,X^n_{s_1,t_1})\Big\}\Vert D_{r,u}X^n_{s_1,t_1}\Vert\mathrm{d}t_1\mathrm{d}s_1.
	\end{align*} 
	Therefore (see e.g. \cite[Lemma 5.1.1]{Qi16}),
	\begin{align}\label{eqmalder11}
		\Vert D_{r,u}X^n_{s,t}\Vert\leq&\exp\Big(\int_r^s\int_u^t\sum\limits_{i,j=1}^d\Big\{\partial_i\hat{b}_{j,n}^{}(s_1,t_1,X^n_{s_1,t_1})+\partial_i\check{b}_{j,n}^{}(s_1,t_1,X^n_{s_1,t_1})\Big\}\mathrm{d}t_1\mathrm{d}s_1\Big). 
	\end{align}
	Squaring both sides of the inequality, taking the expectation and using the Girsanov theorem (see e.g. \cite[Theorem 3.5]{DM15} and \cite[Proposition 1.6]{NP94}) and the Cauchy-Schwarz inequality, we obtain
	\begin{align*}
		&\mathbb{E}[\Vert D_{r,u}X^n_{s,t}\Vert^2]\\
		\leq&\E\Big[\exp\Big(2\int_r^s\int_u^t\sum\limits_{i,j=1}^d\Big\{\partial_i\hat{b}_{j,n}^{}(s_1,t_1,X^n_{s_1,t_1})+\partial_i\check{b}_{j,n}^{}(s_1,t_1,X^n_{s_1,t_1})\Big\}\mathrm{d}t_1\mathrm{d}s_1\Big)\Big]\\=&\E\Big[\mathcal{E}\Big(\int_0^1\int_0^1b_n(s_1,t_1,W_{s_1,t_1})\cdot\mathrm{d}W_{s_1,t_1}\Big)\exp\Big(2\int_r^s\int_u^t\sum\limits_{i,j=1}^d\Big\{\partial_i\hat{b}_{j,n}^{}(s_1,t_1,W_{s_1,t_1})+\partial_i\check{b}_{j,n}^{}(s_1,t_1,W_{s_1,t_1})\Big\}\mathrm{d}t_1\mathrm{d}s_1\Big)\Big]\\ \leq& C_0\E\Big[\exp\Big(4\int_r^s\int_u^t\sum\limits_{i,j=1}^d\Big\{\partial_i\hat{b}_{j,n}^{}(s_1,t_1,X^n_{s_1,t_1})+\partial_i\check{b}_{j,n}^{}(s_1,t_1,X^n_{s_1,t_1})\Big\}\mathrm{d}t_1\mathrm{d}s_1\Big)\Big]^{\frac{1}{2}}\\=&C_0\E\Big[\prod\limits_{i,j=1}^d\exp\Big(4\int_r^s\int_u^t\partial_i\hat{b}_{j,n}(s_1,t_1,W_{s_1,t_1})\mathrm{d}t_1\mathrm{d}s_1\Big)\exp\Big(4\int_r^s\int_u^t\partial_i\check{b}_{j,n}(s_1,t_1,W_{s_1,t_1})\mathrm{d}t_1\mathrm{d}s_1\Big)\Big]^{\frac{1}{2}},
	\end{align*}
	where
	\begin{align}\label{EqUnifL2BigEcalBound}
		C_0:=\sup\limits_{n\geq1}\E\Big[\mathcal{E}\Big(\int_0^1\int_0^1b_n(s_1,t_1,W_{s_1,t_1})\cdot\mathrm{d}W_{s_1,t_1}\Big)^2\Big]^{\frac{1}{2}}
	\end{align}
	is finite (see Lemma \ref{lemUnifL2ExpBound} and Remark \ref{remUnifL2ExpBound} in the Appendix), with
	$$
	\mathcal{E}\Big(\int_0^1\int_0^1b_n(s_1,t_1,W_{s_1,t_1})\cdot\mathrm{d}W_{s_1,t_1}\Big)=\exp\Big(\int_0^1\int_0^1b_n(s_1,t_1,W_{s_1,t_1})\cdot\mathrm{d}W_{s_1,t_1}-\frac{1}{2}\int_0^1\int_0^1|b_n(s_1,t_1,W_{s_1,t_1})|^2\cdot\mathrm{d}s_1\mathrm{d}t_1\Big).
	$$ 
	Hence, by H\"older inequality, we have
	\begin{align*}%\label{eqmalderprof1}
		&\mathbb{E}[\Vert D_{r,u}X^n_{s,t}\Vert^2]\\
		\leq& C_0\prod\limits_{i,j=1}^d\mathbb{E}\Big[\exp\Big(8d^2\int_r^s\int_u^t\partial_i\hat{b}_{j,n}(s_1,t_1,W_{s_1,t_1})\mathrm{d}t_1\mathrm{d}s_1\Big)\Big]^{\frac{1}{4d^2}}\mathbb{E}\Big[\exp\Big(8d^2\int_r^s\int_u^t\partial_i\check{b}_{j,n}(s_1,t_1,W_{s_1,t_1})\mathrm{d}t_1\mathrm{d}s_1\Big)\Big]^{\frac{1}{4d^2}}.\notag
	\end{align*}
	Then, by Lemma \ref{lemMalEstimate1}, we obtain
	\begin{align*}
		\mathbb{E}[\Vert D_{r,u}X^n_{s,t}\Vert^2]\leq C_0\times\widetilde{C}_1\Big(8d^2,\max\limits_{1\leq j\leq d}\{\Vert \hat{b}_{j,n}\Vert_{\infty}+\Vert \check{b}_{j,n}\Vert_{\infty}\}\Big)\leq C_0\times\widetilde{C}_1\Big(8d^2,\Vert \hat{b}\Vert_{\infty}+\Vert \check{b}\Vert_{\infty}\Big),
	\end{align*}
	which means that the Malliavin derivative of $X^n$ is bounded in $L^2(\Omega,\Pb;\R^d)$. %We have to show that $X^n$ converges strongly to $X$ in $L^2$.
	%\redsout{We can show using our previous results that this bound is finite and only depends on $\|b_{1,n}\|_{\infty}\leq \|b_1\|_{\infty}\leq \|b\|_{\infty}$. Similarly we can show that the second term on the right side of \eqref{eqmalderprof1} is bounded by $ \|b\|_{\infty}$. Therefore the we conclude that} 

	Next we prove \eqref{eqcompact1}.	We deduce from \eqref{eqmalder1} that for all $0 \leq r' \leq r \leq s\leq 1,\,\,0 \leq u' \leq u \leq t\leq1$, 
	
	\begin{align*}
		&D_{r,u}X^n_{s,t}- D_{r',u'}X^n_{s,t}\\
		=&\int_r^s\int_u^t\nabla b_n^{}(s_1,t_1,X^n_{s_1,t_1})\,D_{r,u}X^n_{s_1,t_1}\,\mathrm{d}t_1\mathrm{d}s_1-\int_{r'}^s\int_{u'}^t\nabla b_n^{}(s_1,t_1,X^n_{s_1,t_1})\,D_{r',u'}X^n_{s_1,t_1}\,\mathrm{d}t_1\mathrm{d}s_1\\
		=& \int_r^s\int_u^t\nabla b_n^{}(s_1,t_1,X^n_{s_1,t_1})\Big(D_{r,u}X^n_{s_1,t_1}-D_{r',u'}X^n_{s_1,t_1}\Big)\mathrm{d}t_1\mathrm{d}s_1-\int_{r'}^s\int_{u'}^u\nabla b_n^{}s_1,t_1,X^n_{s_1,t_1})\,D_{r',u'}X^n_{s_1,t_1}\,\mathrm{d}t_1\mathrm{d}s_1\\
		&-\int_{r'}^r\int_{u}^t\nabla b_n^{}(s_1,t_1,X^n_{s_1,t_1})\,D_{r',u'}X^n_{s_1,t_1}\,\mathrm{d}t_1\mathrm{d}s_1.
	\end{align*}	
	Taking the absolute value on both side and using the fact that $b_n=\hat{b}_n-\check{b}_n$, with $\hat{b}_n,\check{b}_n$ nondecreasing, gives
	\begin{align*}
		&\Big\Vert D_{r,u}X^n_{s,t}- D_{r',u'}X^n_{s,t}\Big\Vert\\
		\leq& \int_r^s\int_u^t\sum\limits_{i,j=1}^d\Big\{\partial_i\hat{b}_{j,n}^{}(s_1,t_1,X^n_{s_1,t_1})+\partial_i\check{b}_{j,n}^{}(s_1,t_1,X^n_{s_1,t_1})\Big\}\Big\Vert D_{r,u}X^n_{s_1,t_1}-D_{r',u'}X^n_{s_1,t_1}\Big\Vert\mathrm{d}t_1\mathrm{d}s_1\\
		&+\int_{r'}^s\int_{u'}^u\sum\limits_{i,j=1}^d\Big\{\partial_i\hat{b}_{j,n}^{}(s_1,t_1,X^n_{s_1,t_1})+\partial_i\check{b}_{j,n}^{}(s_1,t_1,X^n_{s_1,t_1})\Big\}\Big\Vert D_{r',u'}X^n_{s_1,t_1}\Big\Vert\mathrm{d}t_1\mathrm{d}s_1\\
		&+\int_{r'}^r\int_{u}^t\sum\limits_{i,j=1}^d\Big\{\partial_i\hat{b}_{j,n}^{}(s_1,t_1,X^n_{s_1,t_1})+\partial_i\check{b}_{j,n}^{}(s_1,t_1,X^n_{s_1,t_1})\Big\}\Big\Vert D_{r',u'}X^n_{s_1,t_1}\Big\Vert\mathrm{d}t_1\mathrm{d}s_1.	
	\end{align*}
	Applying \cite[Lemma 5.1.1]{Qi16}, we obtain
	\begin{align}\label{eqmallDerr1}
		&\Big\Vert D_{r,u}X^n_{s,t}- D_{r',u'}X^n_{s,t}\Big\Vert
		\\
		%\leq &  \int_r^s\int_u^t\Big(b_{1,n}^{\prime}(X^n_{s_1,t_1})+b_{2,n}^{\prime}(X^n_{s_1,t_1}\Big)\Big|D_{r,u}X^n_{s_1,t_1}-D_{r',u'}X^n_{s_1,t_1}\Big|\mathrm{d}t_1\mathrm{d}s_1\\
		%		&+2\int_{r'}^s\int_{u'}^t\Big(b_{1,n}^{\prime}(X^n_{s_1,t_1})+b_{2,n}^{\prime}(X^n_{s_1,t_1}\Big)\Big|D_{r',u'}X^n_{s_1,t_1}\Big|\mathrm{d}t_1\mathrm{d}s_1\\
		\leq& \Big(\int_{r'}^s\int_{u'}^u\sum\limits_{i,j=1}^d\Big\{\partial_i\hat{b}_{j,n}^{}(s_1,t_1,X^n_{s_1,t_1})+\partial_i\check{b}_{j,n}^{}(s_1,t_1,X^n_{s_1,t_1})\Big\}\Big\Vert D_{r',u'}X^n_{s_1,t_1}\Big\Vert\mathrm{d}t_1\mathrm{d}s_1\notag\\&
		+\int_{r'}^r\int_{u}^t\sum\limits_{i,j=1}^d\Big\{\partial_i\hat{b}_{j,n}^{}(s_1,t_1,X^n_{s_1,t_1})+\partial_i\check{b}_{j,n}^{}(s_1,t_1,X^n_{s_1,t_1})\Big\}\Big\Vert D_{r',u'}X^n_{s_1,t_1}\Big\Vert\mathrm{d}t_1\mathrm{d}s_1\Big)\notag\\
		&\times		\exp\Big(\int_r^s\int_u^t\sum\limits_{i,j=1}^d\Big\{\partial_i\hat{b}_{j,n}^{}(s_1,t_1,X^n_{s_1,t_1})+\partial_i\check{b}_{j,n}^{\prime}(s_1,t_1,X^n_{s_1,t_1})\Big\}\mathrm{d}t_1\mathrm{d}s_1\Big)\notag
	\end{align}	
	Since $\partial_i\hat{b}_{j,n}^{}$ and $\partial_i\check{b}_{j,n}^{}$ are nonnegative, it follows from \eqref{eqmalder11} that
	\begin{align*}
		\Vert D_{r',u'}X^n_{s_1,t_1}\Vert\leq \exp\Big(\int_{r'}^{s}\int_{u'}^{t}\sum\limits_{i,j=1}^d\Big\{\partial_i\hat{b}_{j,n}^{}(s_{2},t_2,X^n_{s_2,t_2})+\partial_i\check{b}_{j,n}^{}(s_2,t_2,X^n_{s_2,t_2})\Big\}\mathrm{d}t_2\mathrm{d}s_2\Big).
	\end{align*}
	Hence, we deduce from  \eqref{eqmallDerr1} that it holds: 	
	\begin{align*}
		\Big\Vert D_{r,u}X^n_{s,t}- D_{r',u'}X^n_{s,t}\Big\Vert
		\leq&  \Big(\int_{r'}^s\int_{u'}^u\sum\limits_{i,j=1}^d\Big\{\partial_i\hat{b}_{j,n}^{}(s_1,t_1,X^n_{s_1,t_1})+\partial_i\check{b}_{j,n}^{}(s_1,t_1,X^n_{s_1,t_1})\Big\}\mathrm{d}t_1\mathrm{d}s_1
		\\&\qquad+\int_{r'}^r\int_{u}^t\sum\limits_{i,j=1}^d\Big\{\partial_i\hat{b}_{j,n}^{}(s_1,t_1,X^n_{s_1,t_1})+\partial_i\check{b}_{j,n}^{}(s_1,t_1,X^n_{s_1,t_1})\Big\}\mathrm{d}t_1\mathrm{d}s_1\Big)\\
		&\times\exp\Big(2\int_{r'}^s\int_{u'}^t\sum\limits_{i,j=1}^d\Big\{\partial_i\hat{b}_{j,n}^{}(s_1,t_1,X^n_{s_1,t_1})+\partial_i\check{b}_{j,n}^{}(s_1,t_1,X^n_{s_1,t_1})\Big\}\mathrm{d}t_1\mathrm{d}s_1\Big).\notag
	\end{align*}
	
	Squaring both sides of the inequality, taking the expectation and using Cauchy-Schwarz inequality and Girsanov theorem give
	\begin{align*}
		&\mathbb{E}\Big[\Big\Vert D_{r,u}X^n_{s,t}- D_{r',u'}X^n_{s,t}\Big\Vert^2\Big]\\
		\leq&
		4^7C_0\Big(\sum\limits_{i,j=1}^d\Big\{\E\Big[\Big(\int_{r'}^s\int_{u'}^u\partial_i\hat{b}_{j,n}^{}(s_1,t_1,W_{s_1,t_1})\,\mathrm{d}t_1\mathrm{d}s_1\Big)^8\Big]+\E\Big[\Big(\int_{r'}^s\int_{u'}^u\partial_i\check{b}_{j,n}^{}(s_1,t_1,W_{s_1,t_1})\,\mathrm{d}t_1\mathrm{d}s_1\Big)^8\Big]\\
		&\qquad\quad+\E\Big[\Big(\int_{r'}^r\int_{u}^t\partial_i\hat{b}_{j,n}^{}(s_1,t_1,W_{s_1,t_1})\,\mathrm{d}t_1\mathrm{d}s_1\Big)^8\Big]+\E\Big[\Big(\int_{r'}^r\int_{u}^t\partial_i\check{b}_{j,n}^{}(s_1,t_1,W_{s_1,t_1})\,\mathrm{d}t_1\mathrm{d}s_1\Big)^8\Big]\Big\}\Big)^{\frac{1}{4}}\\
		&\times\E\Big[\exp\Big(16\int_{r'}^s\int_{u'}^t\sum\limits_{i,j=1}^d\Big\{\partial_i\hat{b}_{j,n}^{}(s_1,t_1,W_{s_1,t_1})+\partial_i\check{b}_{j,n}^{}(s_1,t_1,W_{s_1,t_1})\Big\}\mathrm{d}t_1\mathrm{d}s_1\Big)\Big]^{\frac{1}{4}}\\
		=&4^7C_0(J_1+J_2+J_3+J_4)^{1/4}\times J_5,
	\end{align*}
	where $C_0$ is given by \eqref{EqUnifL2BigEcalBound}.
	It follows from H\"older inequality and the estimates \eqref{eqDavie1}-\eqref{eqDavie2} that 
	\begin{align*}
		J_5=&\E\Big[\exp\Big(16\int_{r'}^s\int_{u'}^t\sum\limits_{i,j=1}^d\Big\{\partial_i\hat{b}_{j,n}^{}(s_1,t_1,W_{s_1,t_1})+\partial_i\check{b}_{j,n}^{}(s_1,t_1,W_{s_1,t_1})\Big\}\mathrm{d}t_1\mathrm{d}s_1\Big)\Big]^\frac{1}{4}\\
		\leq& \prod\limits_{i,j=1}^d\mathbb{E}\Big[\exp\Big(32d^2\int_r^s\int_u^t\partial_i\hat{b}_{j,n}(s_1,t_1,W_{s_1,t_1})\mathrm{d}t_1\mathrm{d}s_1\Big)\Big]^{\frac{1}{8d^2}}\mathbb{E}\Big[\exp\Big(32d^2\int_r^s\int_u^t\partial_i\check{b}_{j,n}(s_1,t_1,W_{s_1,t_1})\mathrm{d}t_1\mathrm{d}s_1\Big)\Big]^{\frac{1}{8d^2}}
		\\
		\leq& \widetilde{C}_1(32d^2,\Vert\hat{b}\Vert_{\infty}+\Vert\check{b}\Vert_{\infty})^\frac{1}{4}.
	\end{align*}	 
	Moreover, using the inequality $x^8\leq8!e^x$ ($x\in\R$), we have
	\begin{align*}
		&\mathbb{E}\Big[\Big(\int_{r'}^s\int_{u'}^u \partial_i\hat{b}_{j,n}^{}(s_1,t_1,W_{s_1,t_1})\mathrm{d}t_1\mathrm{d}s_1\Big)^8\Big]+\mathbb{E}\Big[\Big(\int_{r'}^s\int_{u'}^u \partial_i\check{b}_{j,n}^{}(s_1,t_1,W_{s_1,t_1})\mathrm{d}t_1\mathrm{d}s_1\Big)^8\Big]\\
		\leq&8!\delta(r',s)^8\delta(u',u)^8\mathbb{E}\Big[\exp\Big(\frac{1}{\delta(r',s)\delta(u',u)}\Big|\int_{r'}^s\int_{u'}^u\partial_i\hat{b}_{j,n}^{}(s_1,t_1,W_{s_1,t_1})\mathrm{d}t_1\mathrm{d}s_1\Big|\Big)\Big]\\
		&\quad+8!\delta(r',s)^8\delta(u',u)^8\mathbb{E}\Big[\exp\Big(\frac{1}{\delta(r',s)\delta(u',u)}\Big|\int_{r'}^s\int_{u'}^u\partial_i\check{b}_{j,n}^{}(s_1,t_1,W_{s_1,t_1})\mathrm{d}t_1\mathrm{d}s_1\Big|\Big)\Big]\\
		\leq&2(8!)\widetilde{C}_1(1,\Vert \hat{b}\Vert_{\infty}+\Vert\check{b}\Vert_{\infty})\delta(u',u)^4
	\end{align*}
	and
	\begin{align*}
		&\mathbb{E}\Big[\Big(\int_{r'}^r\int_{u}^t \partial_i\hat{b}_{j,n}^{}(s_1,t_1,W_{s_1,t_1})\mathrm{d}t_1\mathrm{d}s_1\Big)^8\Big]+\mathbb{E}\Big[\Big(\int_{r'}^r\int_{u}^t \partial_i\check{b}_{j,n}^{}(s_1,t_1,W_{s_1,t_1})\mathrm{d}t_1\mathrm{d}s_1\Big)^8\Big]\\
		\leq&8!\delta(r',r)^8\delta(u,t)^8\mathbb{E}\Big[\exp\Big(\frac{1}{\delta(r',r)\delta(u,t)}\Big|\int_{r'}^r\int_u^t\partial_i\hat{b}_{j,n}^{}(s_1,t_1,W_{s_1,t_1})\mathrm{d}t_1\mathrm{d}s_1\Big|\Big)\Big]\\
		&\quad+8!\delta(r',r)^8\delta(u,t)^8\mathbb{E}\Big[\exp\Big(\frac{1}{\delta(r',r)\delta(u,t)}\Big|\int_{r'}^r\int_u^t\partial_i\check{b}_{j,n}^{}(s_1,t_1,W_{s_1,t_1})\mathrm{d}t_1\mathrm{d}s_1\Big|\Big)\Big]\\
		\leq&2(8!)\widetilde{C}_1(1,\Vert \hat{b}\Vert_{\infty}+\Vert \check{b}\Vert_{\infty})\delta(r,r')^8.
	\end{align*}
	As a consequence,
	\begin{align*}
		J_1+J_2+J_3+J_4\leq 4(8!)\widetilde{C}_1(1,\Vert \hat{b}\Vert_{\infty}+\Vert \check{b}\Vert_{\infty}) (|r-r'|^4+|u-u'|^4).
	\end{align*}
	Therefore
	\begin{align*}
		\mathbb{E}\Big[\Big\Vert D_{r,u}X^n_{s,t}- D_{r',u'}X^n_{s,t}\Big\Vert^2\Big]\leq 4^8(8!)C_0\times\widetilde{C}_1(32d^2,\Vert \hat{b}\Vert_{\infty}+\Vert\check{b}\Vert_{\infty})^{1/2}(|r-r'|+|u-u'|).
	\end{align*}
	Finally, by the Girsanov theorem and the Cauchy-Schwarz inequality, we have 
	\begin{align*}
		\sup\limits_{n\geq1}\|X^n_{s,t}\|^2_{L^2(\Omega,\Pb;\R^d)}=\sup\limits_{n\geq1}\E\left[\vert X_{s,t}^{n}\vert^2\right]&=\sup\limits_{n\geq1}\E\Big[\mathcal{E}\Big(\int_0^1\int_0^1b_n(s_1,t_1,W_{s_1,t_1})\cdot\mathrm{d}W_{s_1,t_1}\Big)|W_{s,t}|^2\Big]\\ &\leq\sup\limits_{n\geq1}\E\Big[\mathcal{E}\Big(\int_0^1\int_0^1b_n(s_1,t_1,W_{s_1,t_1})\cdot\mathrm{d}W_{s_1,t_1}\Big)^2\Big]^\frac{1}{2}\E\Big[|W_{s,t}|^4\Big]^{\frac{1}{2}}\leq C_0\sqrt{3d},
	\end{align*}
	where $C_0$ is given by \eqref{EqUnifL2BigEcalBound}. 
	
	The proof is completed by taking $C_1=C_0\max\{\sqrt{3d},4^8(8!)\widetilde{C}_1(32d^2,\Vert\hat{b}\Vert_{\infty}+\Vert\check{b}\Vert_{\infty})\}$.
\end{proof}

For $q\geq 1$, let us consider the following space $L^{q} (\mathbb{R}^d; \mathfrak{p}(x)\diffns x)$ defined by 
\begin{align}
	L^{q} (\mathbb{R}^d; \mathfrak{p}(x)\diffns x)=\Big\{h:\mathbb{R}^d\rightarrow \mathbb{R}^d \text{ measurable and such that }\int_{\mathbb{R}^d}|h(x)|^q\mathfrak{p}(x)\diffns x<\infty\Big\},
\end{align}
where the weight function $\mathfrak{p}(x)$ is defined by 
$$\mathfrak{p}(x)=e^{\frac{-|x|^2}{2st}},\,\,x\in \mathbb{R}^d.$$

\begin{theorem}\label{thml2con1}
	Let $b_n$ be defined as before and let $(X^{\xi,n})_{n\geq 1}$ be the sequence of corresponding strong solutions to the SDE  \eqref{Eqmainhpde5}. Then for any fixed $s,t\in [0,1]$, $(X_{s,t}^{\xi,n})_{n\geq 1}$ converges strongly in $L^2(\Omega,\Pb;\R^d)$ to $X_{s,t}^{\xi}$.
\end{theorem}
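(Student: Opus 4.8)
The plan is to deduce strong convergence from the relative compactness already established in Theorem \ref{thmcompact1} together with a \emph{weak} $L^2$-convergence statement: recall that in a Hilbert space a relatively compact sequence which converges weakly to a limit $Y$ in fact converges strongly to $Y$ (every subsequence has a strongly convergent sub-subsequence, whose limit must coincide with the weak limit). Thus it suffices to show that $X^{\xi,n}_{s,t}\rightharpoonup X^{\xi}_{s,t}$ in $L^2(\Omega,\Pb;\R^d)$. As in the proof of Lemma \ref{thmcompact2} I take $\xi=0$ (the general case being an obvious shift), and I abbreviate the Girsanov densities
$$
\mathcal{E}_n=\mathcal{E}\Big(\int_0^1\int_0^1 b_n(s_1,t_1,W_{s_1,t_1})\cdot\mathrm{d}W_{s_1,t_1}\Big),\qquad \mathcal{E}=\mathcal{E}\Big(\int_0^1\int_0^1 b(s_1,t_1,W_{s_1,t_1})\cdot\mathrm{d}W_{s_1,t_1}\Big).
$$

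\emph{Step 1 (a change-of-variable identity).} For the drift-removal map $\Theta_n(y):=y-\int_0^\cdot\int_0^\cdot b_n(s_1,t_1,y_{s_1,t_1})\,\mathrm{d}s_1\mathrm{d}t_1$ one has $W=\Theta_n(X^n)$, since $X^n$ solves \eqref{Eqmainhpde5} with drift $b_n$. Hence, for any bounded continuous cylindrical functional $G$ on path space, $X^n_{s,t}\,G(W)=\Phi_n(X^n)$ with $\Phi_n(y)=y_{s,t}G(\Theta_n(y))$, and the two-parameter Girsanov theorem in the very form already used in the proof of Lemma \ref{thmcompact2} (namely $\E[\Phi(X^n)]=\E[\mathcal{E}_n\,\Phi(W)]$) gives
\begin{align*}
\E\big[X^{n}_{s,t}\,G(W)\big]=\E\Big[\mathcal{E}_n\, W_{s,t}\, G\big(\Theta_n(W)\big)\Big].
\end{align*}

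\emph{Step 2 (passage to the limit).} Here I would use that $b_n\to b$ holds $\mathrm{d}s\,\mathrm{d}t\,\mathrm{d}x$-a.e.\ together with the fact that $W_{s_1,t_1}$ admits a strictly positive density on $\R^d$ for $s_1,t_1>0$, so that (via Fubini) the a.e.\ convergence transfers to $b_n(s_1,t_1,W_{s_1,t_1})\to b(s_1,t_1,W_{s_1,t_1})$ for $\mathrm{d}s_1\mathrm{d}t_1\times\Pb$-a.e.\ $(s_1,t_1,\omega)$. Since $\|b_n\|_\infty$ is uniformly bounded, dominated convergence yields $b_n(\cdot,W)\to b(\cdot,W)$ in $L^2([0,1]^2\times\Omega)$; by the It\^o isometry $\int\int b_n(W)\,\mathrm{d}W\to\int\int b(W)\,\mathrm{d}W$ in $L^2(\Omega)$, while $\sup_{s,t}\big|\int_0^s\int_0^t(b_n-b)(W)\big|\to 0$ in probability, so $\Theta_n(W)\to\Theta(W)$ in sup-norm in probability with $\Theta(y):=y-\int_0^\cdot\int_0^\cdot b(s_1,t_1,y_{s_1,t_1})$. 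The uniform $L^2$-bound on the densities recorded in \eqref{EqUnifL2BigEcalBound} and Lemma \ref{lemUnifL2ExpBound} makes $(\mathcal{E}_n)_n$ uniformly integrable, whence $\mathcal{E}_n\to\mathcal{E}$ in $L^1(\Omega)$; continuity of $G$ gives $G(\Theta_n(W))\to G(\Theta(W))$ boundedly in probability. Since $\mathcal{E}_n$ is $L^2$-bounded, $G$ is bounded and $W_{s,t}\in L^4$, the products are uniformly integrable, so
\begin{align*}
\E\big[X^{n}_{s,t}\,G(W)\big]\longrightarrow\E\Big[\mathcal{E}\, W_{s,t}\, G\big(\Theta(W)\big)\Big].
\end{align*}
Applying the identity of Step 1 to the limiting equation \eqref{Eqmainhpde5} — whose strong solution $X^{\xi}$ exists and is unique by Theorem \ref{mainres1}, so that $W=\Theta(X^{\xi})$ and Girsanov holds with density $\mathcal{E}$ since $b$ is bounded — the right-hand side equals $\E[X^{\xi}_{s,t}\,G(W)]$.

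\emph{Conclusion.} Bounded continuous cylindrical functionals are total in $L^2(\Omega,\mathcal{F},\Pb)$ and $\sup_n\|X^n_{s,t}\|_{L^2(\Omega)}<\infty$ by \eqref{eqcompact3}; hence the convergence of $\E[X^n_{s,t}G(W)]$ to $\E[X^{\xi}_{s,t}G(W)]$ for all such $G$ upgrades to weak convergence $X^n_{s,t}\rightharpoonup X^{\xi}_{s,t}$ in $L^2(\Omega,\Pb;\R^d)$, and relative compactness from Theorem \ref{thmcompact1} then forces strong convergence to $X^{\xi}_{s,t}$. I expect the main obstacle to lie in Step 2: rigorously justifying the two-parameter Girsanov change of measure and, above all, transferring the merely $\mathrm{d}s\,\mathrm{d}t\,\mathrm{d}x$-a.e.\ convergence of $b_n$ into almost-sure convergence of $b_n(\cdot,W)$, which is exactly what permits passage to the limit through the nonlinear drift despite $b$ being only Borel measurable; the uniform integrability turning the in-probability limits into $L^1$-convergence of the relevant products is where the exponential integrability estimates of Lemmas \ref{lemMalEstimate1} and \ref{lemUnifL2ExpBound} are indispensable.
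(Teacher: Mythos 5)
Your proof is correct, and it shares the paper's top-level architecture --- first show weak convergence $X^{\xi,n}_{s,t}\rightharpoonup X^{\xi}_{s,t}$ in $L^2(\Omega,\Pb;\R^d)$, then upgrade to strong convergence via the relative compactness of Theorem \ref{thmcompact1} and uniqueness of limits (the paper phrases this upgrade as a proof by contradiction, which is the same sub-subsequence argument you give) --- but you implement the weak-convergence step by a genuinely different route. The paper delegates it to Lemma \ref{thml2con2}: it tests against the stochastic exponentials $\mathcal{E}\big(\int_0^1\int_0^1\tfrac{\partial^2\varphi_{s_1,t_1}}{\partial s_1\partial t_1}\cdot\mathrm{d}W_{s_1,t_1}\big)$ with $\varphi\in C_{2,b}([0,1]^2,\R^d)$, which are dense in $L^2(\Omega,\Pb)$, combines the Cameron--Martin translation theorem \eqref{eq:CMTranslaThm} with Girsanov to rewrite everything as Wiener functionals, and then estimates the difference of the two resulting exponentials quantitatively via $|e^a-e^b|\le|e^a+e^b|\,|a-b|$, H\"older and Burkholder--Davis--Gundy, finishing with dominated convergence and the Gaussian-density trick. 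You instead test against bounded continuous cylindrical functionals $G(W)$, recover $W$ pathwise from $X^{\xi,n}$ through the drift-removal map $\Theta_n$, invoke the same two-parameter Girsanov identity $\E[\Phi(X^{\xi,n})]=\E[\mathcal{E}_n\Phi(W)]$ that the paper already uses in Lemma \ref{thmcompact2}, and pass to the limit softly: convergence in probability of $\mathcal{E}_n$, $\Theta_n(W)$ and $G(\Theta_n(W))$, plus uniform integrability supplied by the uniform $L^2$-bound \eqref{EqUnifL2BigEcalBound} on the densities (Remark \ref{remUnifL2ExpBound}). The ingredients doing the real work are identical in both proofs (Girsanov, the uniform $L^2$ bound on the Girsanov densities, and the transfer of the $\mathrm{d}s\,\mathrm{d}t\,\mathrm{d}x$-a.e.\ convergence $b_n\to b$ to convergence along the sheet via its strictly positive density); what differs is the packaging. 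Your version is more economical for the statement at hand, avoiding Cameron--Martin and all quantitative martingale estimates, since only the case $h(x)=x$ is needed; the paper's Lemma \ref{thml2con2} costs more but yields more, namely weak convergence of $h(X^{\xi,n}_{s,t})$ for every $h\in L^{4}(\R^d;\mathfrak{p}(x)\,\mathrm{d}x)$, which your argument does not directly produce (for nonlinear $h$ one cannot write $h(X^{\xi,n}_{s,t})G(W)$ as a functional of $X^{\xi,n}$ and run the same soft limit without new estimates). Two points you should make explicit in a write-up: (i) the limiting identity $\E[X^{\xi}_{s,t}G(W)]=\E[\mathcal{E}\,W_{s,t}\,G(\Theta(W))]$ uses uniqueness in law for \eqref{Eqmainhpde5} with the merely measurable drift $b$, which is available here from Theorem \ref{mainres1}; (ii) totality of cylindrical functionals in $L^2(\Omega,\Pb)$ rests on $\Omega$ being the canonical space of the sheet, so that $\mathcal{F}$ is (the completion of) $\sigma(W)$.
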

In order to prove the above theorem we need the subsequent result.
\begin{lemma}\label{thml2con2}
	Let $(X^{\xi,n})_{n\geq 1}$ be the sequence of corresponding strong solutions as given before.  Then for every $s,t \in [0,1]$ and function $h\in L^{4} (\mathbb{R}^d; \mathfrak{p}(x)\diffns x)$, it holds that the sequence $(h(X_{s,t}^{\xi,n}))_{n\geq 1}$ converges weakly in $L^2(\Omega,\Pb;\R^d)$ to $h(X_{s,t}^{\xi})$. 
\end{lemma}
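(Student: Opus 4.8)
The plan is to establish the weak convergence by testing against a total family of functionals of the driving sheet and passing to the limit through a Girsanov change of measure, exactly as in the computations of Lemma~\ref{thmcompact2}. As there, I would first reduce to $\xi=0$, so that $W_{s,t}\sim N(0,st\,I_d)$ and the weight $\mathfrak{p}$ is, up to the constant $(2\pi st)^{-d/2}$, precisely the density of $W_{s,t}$. The first step is a uniform $L^2$-bound: by the Girsanov theorem (in the form already used above) and Cauchy--Schwarz,
$$\mathbb{E}\big[\,|h(X^n_{s,t})|^2\,\big]=\mathbb{E}\Big[\mathcal{E}\Big(\int_0^1\int_0^1 b_n(s_1,t_1,W_{s_1,t_1})\cdot \mathrm{d}W_{s_1,t_1}\Big)\,|h(W_{s,t})|^2\Big]\leq C_0\,\mathbb{E}\big[\,|h(W_{s,t})|^4\,\big]^{1/2},$$
and $\mathbb{E}[|h(W_{s,t})|^4]=(2\pi st)^{-d/2}\int_{\mathbb{R}^d}|h(x)|^4\mathfrak{p}(x)\,\mathrm{d}x<\infty$ since $h\in L^4(\mathbb{R}^d;\mathfrak{p}(x)\mathrm{d}x)$, with $C_0$ the constant from \eqref{EqUnifL2BigEcalBound}. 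Hence $\sup_n\|h(X^n_{s,t})\|_{L^2(\Omega)}<\infty$, which by a standard $3\varepsilon$-argument reduces the weak convergence to checking $\mathbb{E}[\langle h(X^n_{s,t}),Z\rangle]\to\mathbb{E}[\langle h(X_{s,t}),Z\rangle]$ on a total subset of $L^2(\Omega,\mathcal{F}_{\infty}^W;\mathbb{R}^d)$; I would take $Z$ to run over bounded continuous cylindrical functionals $Z=F(W_{p_1},\dots,W_{p_m})$ of the sheet and $h$ bounded continuous, lifting to all $h\in L^4(\mathfrak{p})$ at the very end via the same uniform bound applied to $h-h_k$.

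The key identity is a Girsanov representation that keeps track of the driving noise. Writing $\mathcal{E}_n=\mathcal{E}\big(\int_{\Gamma_0}b_n(s_1,t_1,W_{s_1,t_1})\cdot\mathrm{d}W_{s_1,t_1}\big)$, under $\mathrm{d}\mathbb{Q}_n=\mathcal{E}_n\,\mathrm{d}\mathbb{P}$ the process $V^n_{s,t}:=W_{s,t}-\int_0^s\!\!\int_0^t b_n(s_1,t_1,W_{s_1,t_1})\,\mathrm{d}t_1\mathrm{d}s_1$ is a Brownian sheet and $W$ solves \eqref{Eqmainhpde5} with drift $b_n$ driven by $V^n$. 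Since $b_n$ is smooth this equation has a unique strong solution, so the joint law of the pair (solution, driving sheet) is determined by the drift; comparing the two representations of this pair gives, for $F(x,w)=h(x_{s,t})\,Z(w)$,
$$\mathbb{E}\big[h(X^n_{s,t})\,Z\big]=\mathbb{E}\Big[\mathcal{E}_n\,h(W_{s,t})\,Z\big(V^n\big)\Big].$$
The same computation for the bounded limit drift $b$, whose strong solution $X=X^{\xi}$ is the \emph{unique} one by Theorem~\ref{mainres1}, yields
$$\mathbb{E}[h(X_{s,t})\,Z]=\mathbb{E}\big[\mathcal{E}\,h(W_{s,t})\,Z(V)\big],\qquad V_{s,t}=W_{s,t}-\int_0^s\!\!\int_0^t b(s_1,t_1,W_{s_1,t_1})\,\mathrm{d}t_1\mathrm{d}s_1,$$
with $\mathcal{E}=\mathcal{E}(\int_{\Gamma_0}b(s_1,t_1,W_{s_1,t_1})\cdot\mathrm{d}W_{s_1,t_1})$. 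It is here that pathwise uniqueness is indispensable: it identifies the weak solution $W$ under $\mathbb{Q}$ with $\Phi(V)$ for the strong-solution map $\Phi$, so that $\mathrm{Law}_{\mathbb{Q}}(W,V)=\mathrm{Law}_{\mathbb{P}}(X,W)$.

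It then remains to pass to the limit on the right-hand side. Because $b_n\to b$ for $\mathrm{d}s\,\mathrm{d}t\,\mathrm{d}x$-a.e. $(s,t,x)$ and $W_{s_1,t_1}$ has a density for $s_1,t_1>0$, Fubini gives $b_n(s_1,t_1,W_{s_1,t_1})\to b(s_1,t_1,W_{s_1,t_1})$ for a.e. $(s_1,t_1)$, a.s.; dominated convergence (the $b_n$ being uniformly bounded) then yields $\int_0^{\cdot}\!\!\int_0^{\cdot}b_n(\cdot,W)\to\int_0^{\cdot}\!\!\int_0^{\cdot}b(\cdot,W)$ uniformly on $\Gamma_0$, a.s., whence $V^n\to V$ uniformly a.s. and $Z(V^n)\to Z(V)$ a.s. for continuous cylindrical $Z$. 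Moreover $\mathcal{E}_n\to\mathcal{E}$ in probability (by It\^o isometry $\int b_n(W)\mathrm{d}W\to\int b(W)\mathrm{d}W$ in $L^2$ and $\int|b_n(W)|^2\to\int|b(W)|^2$ a.s.), and since $\sup_n\mathbb{E}[\mathcal{E}_n^2]=C_0^2<\infty$ the family $(\mathcal{E}_n)$ is uniformly integrable, so $\mathcal{E}_n\to\mathcal{E}$ in $L^1$. Splitting $\mathcal{E}_n h(W_{s,t})Z(V^n)-\mathcal{E} h(W_{s,t})Z(V)$ into $(\mathcal{E}_n-\mathcal{E})\,h(W_{s,t})Z(V^n)$ and $\mathcal{E}\,h(W_{s,t})(Z(V^n)-Z(V))$ and using that $h,Z$ are bounded then gives $L^1$-convergence of the products. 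The main obstacle is exactly this interchange of limit and expectation: the densities $\mathcal{E}_n$ are not uniformly bounded, so one must marry their $L^1$ convergence to the a.s.\ convergence of the transformed test functional $Z(V^n)$ through the uniform integrability coming from $C_0<\infty$, while making sure the identification of the limit genuinely invokes the uniqueness of the strong solution of \eqref{Eqmainhpde5} rather than mere weak existence.
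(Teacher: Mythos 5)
Your proposal is correct, and it reaches the conclusion by a genuinely different route than the paper. The paper tests $h_i(X^{\xi,n}_{s,t})$ against the dense family of stochastic exponentials $\mathcal{E}\big(\int_0^1\int_0^1\frac{\partial^2\varphi_{s_1,t_1}}{\partial s_1\partial t_1}\cdot\mathrm{d}W_{s_1,t_1}\big)$ with $\varphi\in C_{2,b}([0,1]^2,\mathbb{R}^d)$, and uses the two-parameter Cameron--Martin translation theorem \eqref{eq:CMTranslaThm} to absorb the test functional into the drift: $X^{\xi,n}_{s,t}(\omega+\varphi)$ solves the SDE with drift $b_n+\partial^2\varphi/\partial s\partial t$, so only the \emph{marginal} law of the shifted solution is needed, and Girsanov turns both pairings into expectations of $h_i(\xi+W_{s,t})$ against two explicit exponentials; the convergence is then proved quantitatively via $|e^a-e^b|\le|e^a+e^b|\,|a-b|$, H\"older, Burkholder--Davis--Gundy, and a Gaussian-density dominated-convergence argument, which treats unbounded $h\in L^4(\mathbb{R}^d;\mathfrak{p}(x)\mathrm{d}x)$ directly. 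You instead test against cylindrical functionals $Z(W)$, which survive the change of measure as $Z(V^n)$; your key identity $\mathbb{E}[h(X^n_{s,t})Z(W)]=\mathbb{E}[\mathcal{E}_n\,h(W_{s,t})\,Z(V^n)]$ therefore requires identifying the \emph{joint} law of the pair (solution, driving sheet), i.e.\ the Yamada--Watanabe/pathwise-uniqueness statement $W=\Phi(V^n)$ (resp.\ $W=\Phi(V)$) under the new measure --- a strictly stronger input than uniqueness in law, but one that is available here (for $b_n$ by smoothness, for $b$ by Theorem \ref{mainres1}); the limit is then taken softly, through a.s.\ convergence of $Z(V^n)$ and uniform integrability of $(\mathcal{E}_n)_n$, both resting on the same constant $C_0$ of \eqref{EqUnifL2BigEcalBound} that the paper uses. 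The trade-off is clear: the paper's route avoids your two extra approximation layers (reduction to bounded continuous $h$ and lifting to $L^4(\mathfrak{p})$ via the uniform $L^2$ bound, plus totality of cylindrical functionals) and yields explicit rates, while your route dispenses with the Cameron--Martin translation theorem (for which the paper must invoke the two-parameter analogues of Kitagawa and Yeh) and with all the algebra of comparing stochastic exponentials, replacing them by a structural law-identification argument. Both proofs ultimately hinge on Girsanov together with the uniform second-moment bound on the exponential densities.
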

\begin{proof}[Proof of Theorem \ref{thml2con1}]
	Using Theorem \ref{thmcompact1}, we know that for each $s,t$, there exists a subsequence $(X^{\xi,n_k}_{s,t})_{k\geq1}$ that converges strongly in $L^2(\Omega,\Pb;\R^d)$.  Set $h(x)=x,\, x\in \mathbb{R}^d$ and use Lemma \ref{thml2con2} to obtain that $(X^{\xi,n_k}_{s,t})_{n\geq1}$ converges weakly to $X^{\xi}_{s,t}$ in $L^2(\Omega,\Pb;\R^d)$. Thanks to the uniqueness of the limit, there exists a subsequence $n_k$ such that $(X^{\xi,n_k}_{s,t})_{n\geq1}$ converges strongly to $X^{\xi}_{s,t}$ in $L^2(\Omega,\Pb;\R^d)$. The convergence then holds for the entire sequence by uniqueness of the limit. To see this, suppose by contradiction that for some $s,t$, there exist $\epsilon >0$ and a subsequence $n_l, l\geq 0$ such that  
	$$
	\|X_{s,t}^{\xi,n_l}-X^{\xi}_{s,t}\|_{L^2(\Omega,\Pb;\R^d)}\geq \epsilon.
	$$
	We also know from the compactness criteria that there exists a further subsequence $n_m, m\ge 0$ of $n_l, l\geq 0$ such that 
	$$
	X_{s,t}^{\xi,n_{n_m}} \rightarrow \tilde{X}_{s,t}\text{ in } L^2(\Omega,\Pb;\R^d) \text{ as } m \rightarrow\infty .
	$$
	However, $(X^{\xi,n_k}_{s,t})_{n\geq1} \rightarrow X^{\xi}_{s,t}$ as $k\rightarrow \infty$ weakly in $L^2(\Omega,\Pb;\R^d)$, and hence by the uniqueness of the limit, we obtain
	$$
	\tilde{X}_{s,t}=X^{\xi}_{s,t}.
	$$ 
	Since 
	$$
	\|X_{s,t}^{\xi,n_{n_m}}-X^{\xi}_{s,t}\|_{L^2(\Omega,\Pb;\R^d)}\geq \epsilon,
	$$
	we obtain a contradiction.
\end{proof}
\begin{proof}[Proof of Theorem \ref{themmalldiff1}]
	We know from Theorem \ref{thml2con1} that $(X_{s,t}^{\xi,n})_{n\geq1}$ converges strongly in $L^2(\Omega,\Pb;\R^d)$ to $X^{\xi}_{s,t}$ and from \eqref{eqcompact2} in Lemma \ref{thmcompact2} that $(D_{r,u}X^{\xi,n}_{s,t})_{n\geq 1}$ is bounded in the $L^2([0,1]^2\times \Omega,\mathrm{d}s\times\mathrm{dt}\times\Pb;\R^{d\times d})$-norm uniformly in $n$. Thefore, using \cite[Lemma 1.2.3]{Nu06}, we also have that the limit $X^{\xi}_{s,t}$ is Malliavin differentiable.
\end{proof}

\begin{proof}[Proof of Lemma \ref{thml2con2}]
	%	Let us first show that $(h(X^{\xi,n}_{t}))_{n\geq 1}$ is bounded in 
	%	$L^2(\Omega,\Pb;\R^d)$. In fact, using Girsanov transform and H\"older inequality, we have
	%	\begin{align}\label{eqweaklim1}
	%		\sup_{n}	\mathbb{E}\left[|h(X^{\xi,n}_{s,t})|^2\right]&=\sup_{n}\E\left[ \mathcal{E}\Big(\int_0^1\int_0^1b_{n}(s_1,t_1,\xi+ W_{s_1,t_1})\cdot\mathrm{d} W_{s_1,t_1}\Big)|h(\xi+W_{s,t})|^2\right]\\ &\leq  \sup_{n}\mathbb{E}\left[ \mathcal{E}\Big(\int_0^1\int_0^1b_{n}(s_1,t_1,\xi
	%		+W_{s_1,t_1})\cdot\mathrm{d} W_{s_1,t_1}\Big)^2\right]^{\frac{1}{2}}\mathbb{E}\left[|h(\xi+  W_{s,t})|^4\right]^{\frac{1}{2}}\notag\\
	%		%&\quad \times E\left[e^{2\sum_{i=1}^d\int_0^1u_{i,n}^2(r,\omega,x+\sigma \cdot B_r)\diffns r}\right]^{\frac{1}{4}}\notag\\
	%		&\leq  C \E\left[|h(\xi+W_{s,t})|^4\right]^{\frac{1}{4}}\notag\\
	%		&= C\Big(\frac{1}{\sqrt{2\pi st}}\int_{\mathbb{R}^d}|h(\xi+x)|^4e^{\frac{-|x|^2}{2st}}\diffns x\Big)^{\frac{1}{4}}<\infty,\notag
	%	%	&\leq  \frac{C_{\|\sigma \|^2}}{(2\pi t\|\sigma \|^2)^{\frac{1}{8}}}\Big(|x|^{4}\int_{\mathbb{R}}e^{-\frac{|z|^2}{ 2t\|\sigma \|^2}}\diffns z +\int_{\mathbb{R}}e^{-\frac{|z|^2}{ 2^{5}t\|\sigma \|^2}}\diffns z \Big)^{\frac{1}{4}}<\infty.
	%	\end{align}
	%	where $\cdot$ denotes the usual scalar product in $\R^d$ and the constant $$C:=\sup_{n}\mathbb{E}\left[ \mathcal{E}\Big(\int_0^1\int_0^1b_{n}(s_1,t_1,\xi
	%	+W_{s_1,t_1})\cdot\mathrm{d} W_{s_1,t_1}\Big)^2\right]^{\frac{1}{2}}$$ depends only on $ \|\hat{b}\|_\infty$ and $\|\check{b}\|_{\infty}$.	
	%	
	%	We show that $(h(X^{\xi,n}_{t}))_{n\geq 1}\rightarrow h(X^{\xi}_t)$  weakly in $L^2(\Omega,\Pb;\R^d)$, by 
	Let us first noticing that the space
	$$
	\left\{\mathcal{E}\Big(\int_0^1\int_0^1\frac{\partial^2\varphi_{s_1,t_1}}{\partial s_1\partial t_1}\cdot\diffns W_{s_1,t_1}\Big): \varphi\in C_{2,b}([0,1]^2,\mathbb{R}^d)\right\}
	$$
	is a dense subspace of $L^2(\Omega,\Pb;\R^d)$. Here $C_{2,b}([0,1],\mathbb{R}^d)$ is the space of bounded vector functions $\varphi$ such that each component $\varphi^i$ has a second partial derivative $\frac{\partial^2\varphi^i_{s_1,t_1}}{\partial s_1\partial t_1}$ of bounded variation with values in $\mathbb{R}$.  %continuously differentiable functions on $[0,1]^2$ and with values in $\mathbb{R}$ and $\dot\varphi$ is the derivative of $\varphi$. 
	Hence, it suffices to show that for every $i$, $$\mathbb{E}\Big[h_i(X^{\xi,n}_{s,t})\mathcal{E}\Big(\int_0^1\int_0^1\frac{\partial^2\varphi_{s_1,t_1}}{\partial s_1\partial t_1}\cdot\diffns W_{s_1,t_1}\Big)\Big]\longrightarrow \mathbb{E}\Big[h_i(X^{\xi}_{s,t})\mathcal{E}\Big(\int_0^1\int_0^1\frac{\partial^2\varphi_{s_1,t_1}}{\partial s_1\partial t_1}\cdot\diffns W_{s_1,t_1}\Big)\Big] \text{ as } n\rightarrow \infty.$$ %converges to $E\Big[h(X^{\xi}_t)|\mathcal{F}_t\Big]\mathcal{E}\Big(\int_0^1\dot\varphi_r\diffns B_r\Big)$ in expectation. % 
	\textcolor{black}{
		Since $\Omega$ is a Wiener space, then, as in \cite[proof Lemma 2]{Kita51} or \cite[proof of Theorem 2]{Yeh63}, one can show a multidimensional analog of the Cameron-Martin translation theorem. Precisely for every $g:\,\R^d\to\R$ measurable, one has
		\begin{align}
			\label{eq:CMTranslaThm}
			\mathbb{E}\Big[g(X^{\xi}_{s,t})\mathcal{E}\Big(\int_0^1\int_0^1\frac{\partial^2\varphi_{s_1,t_1}}{\partial s_1\partial t_1}\cdot\diffns W_{s_1,t_1}\Big)\Big]=\int_{\Omega} g(X^{\xi}_{s,t}(\omega+\varphi))\diffns \mathbb{P}(\omega).
		\end{align}
	}
	Let $\varphi \in C_{2,b}([0,1]^2,\mathbb{R}^d)$.			
	For every $n$, the process $\tilde X^{\xi,n}$ given by $\tilde X^{\xi,n}_{s,t}(\omega):= X^{\xi,n}_{s,t}(\omega+ \varphi)$ solves the SDE
	\begin{equation}
		\label{eq:CM sde}
		\diffns \tilde X^{\xi,n}_{s,t} = \Big(b_{n}(t,\tilde X^{\xi,n}_{s,t}) +\frac{\partial^2\varphi_{s,t}}{\partial s\partial t}\Big)\diffns s\diffns t + \diffns W_{s,t}.
	\end{equation}
	
	%	To see this, let $H \in L^2(\Omega, P)$ and apply \eqref{eq:CM} and the fact that $X^{x,n}$ solves the SDE \eqref{eqmainr} to get 
	%	\begin{align*}
	%	E[\tilde X^{x,n}_tH] &= E\left[X^{x,n}_tH(\omega - \varphi)\mathcal{E}\left(\int_0^1\dot\varphi(u)\diffns B_u\right)\right]\\
	%	&= E\Big[\Big(x + \int_0^tb_1(u, X^{x,n}_u) + b_2(u)\diffns u + \sigma B_t \Big)H(\omega-\varphi)\mathcal{E}\Big(\int_0^1\dot\varphi\diffns B \Big) \Big] \\
	%	& = E\Big[\Big(x + \int_0^t b_1(u, X^{x,n}_u(\omega+ \varphi))+ b_2(\omega+\varphi)\diffns u  + \sigma B_t(\omega+\varphi)\Big)H\Big]\\
	%	&= E\Big[\Big(x + \int_0^t b_1(u, \tilde X^{x,n}_u(\omega))+ \tilde b_2(\omega)+ \sigma\dot\varphi\diffns u  + \sigma B_t(\omega)\Big)H\Big],
	%	\end{align*} 
	%	where the last equality follows by the fact that $B_t(\omega + \varphi) = B_t(\omega) + \varphi$, since $B$ is the canonical process.	This proves the claim.
	Since $X^{\xi}$ is also the solution to the SDE it holds that $\tilde X^{\xi}(\omega):= X^{\xi}(\omega+ \varphi)$ satisfies
	\begin{equation}
		\diffns \tilde X^{\xi}_{s,t} = \Big(b(t,\tilde X^{\xi}_{s,t}) +\frac{\partial^2\varphi_{s,t}}{\partial s\partial t}\Big)\diffns s\diffns t + \diffns W_{s,t},\quad \mathbb{P}\text{-a.s.}
	\end{equation}
	Applying \eqref{eq:CMTranslaThm} and the Girsanov theorem, we have
	\begin{align}\label{eqweaklim2}
		&	\mathbb{E}\Big[h_i(X^{\xi,n}_{s,t})\mathcal{E}\Big(\int_0^1\int_0^1\frac{\partial^2\varphi_{s_1,t_1}}{\partial s_1\partial t_1}\cdot\diffns W_{s_1,t_1}\Big)-h_i(X^{\xi}_{s,t})\mathcal{E}\Big(\int_0^1\int_0^1\frac{\partial^2\varphi_{s_1,t_1}}{\partial s_1\partial t_1}\cdot\diffns W_{s_1,t_1}\Big)\Big]\notag\\
		=&\mathbb{E}\Big[\Big(h_i(X^{\xi,n}_{s,t})-h_i(X^{\xi}_{s,t})\Big)\mathcal{E}\Big(\int_0^1\int_0^1\frac{\partial^2\varphi_{s_1,t_1}}{\partial s_1\partial t_1}\cdot\diffns W_{s_1,t_1}\Big)\Big]\notag\\
		=&  \mathbb{E}\Big[h_i(\xi+W_{s,t})\Big\{\mathcal{E}\Big(\int_0^1\int_0^1\Big\{b_n(s_1,t_1,\xi+W_{s_1,t_1})+\frac{\partial^2\varphi_{s_1,t_1}}{\partial s_1\partial t_1}\Big\}\cdot\diffns W_{s_1,t_1}\Big)\notag\\
		&-\mathcal{E}\Big(\int_0^1\int_0^1\Big\{b(s_1,t_1,\xi+W_{s_1,t_1})+\frac{\partial^2\varphi_{s_1,t_1}}{\partial s_1\partial t_1}\Big\}\cdot\diffns W_{s_1,t_1}\Big)\Big\}\Big].
	\end{align}
	Using the fact that $|e^a -e^b|\leq |e^a + e^b||a - b|$, the H\"older inequality and Burkholder-Davis-Gundy inequality, we get
	\begin{align}\label{eqweaklim3}
		&	\mathbb{E}\Big[h_i(X^{\xi,n}_{s,t})\mathcal{E}\Big(\int_0^1\int_0^1\frac{\partial^2\varphi_{s_1,t_1}}{\partial s_1\partial t_1}\cdot\diffns W_{s_1,t_1}\Big)-h_i(X^{\xi}_{s,t})\mathcal{E}\Big(\int_0^1\int_0^1\frac{\partial^2\varphi_{s_1,t_1}}{\partial s_1\partial t_1}\cdot\diffns W_{s_1,t_1}\Big)\Big]\notag\\
		\leq& C \mathbb{E}\Big[h_i(x+ W_{s,t})^2\Big]^{\frac{1}{2}}\mathbb{E}\Big[\Big(\mathcal{E}\Big(\int_0^1\int_0^1\Big\{b_n(s_1,t_1,\xi+W_{s_1,t_1})+\frac{\partial^2\varphi_{s_1,t_1}}{\partial s_1\partial t_1}\Big\}\cdot\diffns W_{s_1,t_1}\Big)\notag\\
		&+\mathcal{E}\Big(\int_0^1\int_0^1\Big\{b(s_1,t_1,\xi+W_{s_1,t_1})+\frac{\partial^2\varphi_{s_1,t_1}}{\partial s_1\partial t_1}\Big\}\cdot\diffns W_{s_1,t_1}\Big)\Big)^4\Big]^{\frac{1}{4}}\notag\\
		& \times\Big\{\mathbb{E}\Big[ \Big(\int_0^1\int_0^1\Big(b_n(s_1,t_1,\xi+W_{s_1,t_1})- b(s_1,t_1,\xi+W_{s_1,t_1})\Big)\cdot\diffns W_{s_1,t_1}\Big)^4\Big]\notag\\
		& +\mathbb{E}\Big[\Big(\textcolor{black}{\frac{1}{2}}\int_0^1\int_0^1\Big(b_n(s_1,t_1,\xi+W_{s_1,t_1})+\frac{\partial^2\varphi_{s_1,t_1}}{\partial s_1\partial t_1}\Big)^2	-\Big(b(s_1,t_1,\xi+W_{s_1,t_1})+\frac{\partial^2\varphi_{s_1,t_1}}{\partial s_1\partial t_1}\Big)^2\diffns s_1\diffns t_1\Big)^4\Big] \Big\}^{\frac{1}{4}}\notag\\
		\leq&\textcolor{black}{ C \mathbb{E}\Big[h_i(x+ W_{s,t})^2\Big]^{\frac{1}{2}}\mathbb{E}\Big[\Big(\mathcal{E}\Big(\int_0^1\int_0^1\Big\{b_n(s_1,t_1,\xi+W_{s_1,t_1})+\frac{\partial^2\varphi_{s_1,t_1}}{\partial s_1\partial t_1}\Big\}\cdot\diffns W_{s_1,t_1}\Big)\notag}\\
		& \textcolor{black}{+\mathcal{E}\Big(\int_0^1\int_0^1\Big\{b(s_1,t_1,\xi+W_{s_1,t_1})+\frac{\partial^2\varphi_{s_1,t_1}}{\partial s_1\partial t_1}\Big\}\cdot\diffns W_{s_1,t_1}\Big)\Big)^4\Big]^{\frac{1}{4}}\notag}\\
		&  \textcolor{black}{\times\Big\{ \int_0^1\int_0^1\mathbb{E}\Big[\Big|b_n(r,\xi+W_{s_1,t_1})- b(s_1,t_1,\xi+W_{s_1,t_1})\Big|^4\Big]\diffns s_1\diffns t_1\notag}\\
		& \textcolor{black}{ +\frac{1}{16}\int_0^1\int_0^1\mathbb{E}\Big[\Big|\Big(b_n(s_1,t_1,\xi+W_{s_1,t_1})+\frac{\partial^2\varphi_{s_1,t_1}}{\partial s_1\partial t_1}\Big)^2	-\Big(b(s_1,t_1,\xi+W_{s_1,t_1})+\frac{\partial^2\varphi_{s_1,t_1}}{\partial s_1\partial t_1}\Big)^2\Big|^4\Big]\diffns s_1\diffns t_1 \Big\}^{\frac{1}{4}}\notag}\\
		&=I_1\times I_{2,n}\times (I_{3,n}+I_{4,n}).
	\end{align}
	$I_1$ is finite since $h\in L^{4} (\mathbb{R}; \mathfrak{p}(x)\diffns x)$. Next observe that
	\begin{align*}
		&\mathcal{E}\Big(\int_0^1\int_0^1\Big\{b_n(s_1,t_1,\xi+W_{s_1,t_1})+\frac{\partial^2\varphi_{s_1,t_1}}{\partial s_1\partial t_1}\Big\}\cdot\diffns W_{s_1,t_1}\Big)\\
		=& \mathcal{E}\Big(\int_0^1\int_0^1b_n(s_1,t_1,\xi+W_{s_1,t_1})\cdot\diffns W_{s_1,t_1}\Big)\mathcal{E}\Big(\int_0^1\int_0^1\frac{\partial^2\varphi_{s_1,t_1}}{\partial s_1\partial t_1}\cdot\diffns W_{s_1,t_1}\Big)\\
		&\times\exp\Big(\int_0^1\int_0^1b_n(s_1,t_1,\xi+W_{s_1,t_1})\frac{\partial^2\varphi_{s_1,t_1}}{\partial s_1\partial t_1}\diffns  s_1\diffns  t_1\Big).
	\end{align*}
	Using the boundedness of $\frac{\partial^2\varphi_{s_1,t_1}}{\partial s_1\partial t_1}$ and the uniform boundedness of $b_n$, it follows that $ I_{2,n}$ is bounded.
	Using the dominated convergence theorem, we get that $I_{3,n}$ and $I_{4,n}$ converge to $0$ as $n$ goes to infinity. Let us for example consider the term  $I_{3,n}$. Using the density of the Brownian sheet for every $q\geq 1$ it holds:
	\begin{align*}
		\mathbb{E}\Big[\Big|b_n(s,t,\xi+W_{s,t})- b(s,t,\xi+W_{s,t})\Big|^q\Big]=&\frac{1}{\sqrt{2\pi st}}\int_{\mathbb{R}}|b_n(s,t,\xi+z)-b(s,t,\xi+z)|^qe^{\frac{-|z|^2}{2st}}\diffns z\\
		=&\frac{1}{\sqrt{2\pi st}}\int_{\mathbb{R}}|b_n(s,t,z)-b(s,t,z)|^qe^{\frac{-|z-\xi|^2}{2st}}\diffns z\\
		=&\frac{1}{\sqrt{2\pi st}}\int_{\mathbb{R}}|b_n(s,t,z)-b(s,t,z)|^qe^{\frac{-|z-2\xi|^2}{4st}}e^{\frac{-|z|^2}{4st}}e^{\frac{|\xi|^2}{4st}}\diffns z\\
		\leq&\frac{e^{\frac{|\xi|^2}{4st}}}{\sqrt{2\pi st}}\int_{\mathbb{R}}|b_n(s,t,z)-b(s,t,z)|^qe^{\frac{-|z|^2}{4st}}\diffns z.
	\end{align*}
	Thus the result follows by the dominated convergence theorem.
\end{proof}

\subsection{Malliavin regularity under linear growth condition}
As in the previous section, we approximate the drift coefficient $b=\hat{b}-\check{b}$ by a sequence of functions 
$b_n:=\hat{b}_{n}-\check{b}_{n}, n\geq 1$,
where $\hat{b}_{n}=(\hat{b}_{1,n},\ldots,\hat{b}_{d,n})$, $\check{b}_{n}=(\check{b}_{1,n},\ldots,\check{b}_{d,n})$, $(\hat{b}_{j,n})_{n\geq 1}$ and $(\check{b}_{j,n})_{n\geq 1}$ are smooth, componentwise non-decreasing and bounded functions satisfying:
\begin{enumerate}
	\item[$\bullet$]There exists $\tilde{M}>0$ such that $\|\hat{b}_{j,n}(s,t,x)\|\leq \tilde{M}(1+|x|)$ and $\|\hat{b}_{j,n}(s,t,x)\|\leq \tilde{M}(1+|x|)$ for all $n\geq1$ and $(s,t,x)\in\Gamma_0\times\R^d$,
	\item[$\bullet$]$(\hat{b}_{n})_{n\geq 1}$ (respctively, $(\check{b}_{n})_{n\geq 1}$) converges to  $\hat{b}$ (respctively, $\check{b}$) in $(s,t,x) \in \Gamma_0\times \R^d$ $\mathrm{d}s\times\mathrm{d}t\times\mathrm{d}x$-a.e.
\end{enumerate}
One verifies that for such smooth drift coefficients, the corresponding SDEs have a unique strong solution denoted by $X^{\xi,n}$. We show that for $s,t$ small enough, the sequence $(X^{\xi,n}_{s,t})_{n \geq 1}$ is relatively compact in $L^2(\Omega,\Pb;\R^d)$. The proofs of the next two results are similar to that of Lemmas \ref{lemMalEstimate1} and \ref{thmcompact2} and are found in Appendix.
\begin{lemma}\label{lemMalDifflgrowth}
	There exist $\widetilde{C}_2>0$ and $\ze>0$ such that, for any $i,j\in\{1,\cdots,d\}$, any $0< r<s\leq1$, any $0< u<t\leq1$ and any $k\in\R_+$,  
	\begin{align}\label{eqDavie12}
		\mathbb{E}\Big[\exp\Big(\frac{\ze}{\delta(r,s)\delta(u,t)} \int_r^s\int_u^t\partial_i\hat{b}_{j,n}(s_1,t_1,W_{s_1,t_1})\mathrm{d}t_1\mathrm{d}s_1\Big)\Big]\leq \widetilde{C}_2,
	\end{align}
	and
	\begin{align}\label{eqDavie22}
		\mathbb{E}\Big[\exp\Big(\frac{\ze}{\delta(r,s)\delta(u,t)} \int_r^s\int_u^t\partial_i\check{b}_{j,n}(s_1,t_1,W_{s_1,t_1})\mathrm{d}t_1\mathrm{d}s_1\Big)\Big]\leq \widetilde{C}_2,
	\end{align}
	where $\delta(r,s)=\sqrt{s-r}$ and $\delta(u,t)=\sqrt{t-u}$.
\end{lemma}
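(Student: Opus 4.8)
The plan is to follow the proof of Lemma~\ref{lemMalEstimate1} essentially line by line, the only structural change being that everywhere the uniform bound $|\hat{b}_{j,n}|\le\|\hat{b}_{j,n}\|_{\infty}$ was used one now invokes the linear growth bound $|\hat{b}_{j,n}(s,t,x)|\le\tilde{M}(1+|x|)$, in which $\tilde{M}$ is \emph{independent of $n$}. I would first prove \eqref{eqDavie12}, the proof of \eqref{eqDavie22} being identical. Applying the local time--space integration formula exactly as in \eqref{EqEisenMall1} rewrites $\int_r^s\int_u^t\partial_i\hat{b}_{j,n}(s_1,t_1,W_{s_1,t_1})\,\mathrm{d}t_1\mathrm{d}s_1$ as a sum of three terms, and the convexity of $x\mapsto e^{3x}$ splits the expectation in \eqref{eqDavie12} into $\tfrac13(I_1+I_2+I_3)$, with $I_1,I_2,I_3$ the exact analogues of those appearing in Lemma~\ref{lemMalEstimate1}. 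It then suffices to bound each $I_\ell$ by a constant depending only on $d$ and $\tilde{M}$, provided $\ze$ is chosen small enough.

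For $I_1$ (and $I_2$, treated the same way) I would apply Jensen's inequality in the $s_1$ variable against the probability measure $\frac{\mathrm{d}s_1}{2\sqrt{s_1}(\sqrt{s}-\sqrt{r})}$ on $[r,s]$ to bring the exponential inside, and then invoke the Barlow--Yor inequality for the square integrable martingale $Y_{s_1,t}=\int_u^t\hat{b}_{j,n}(s_1,t_2,W_{s_1,t_2})\,\frac{\mathrm{d}_{t_2}W^{(i)}_{s_1,t_2}}{\sqrt{s_1}}$. The decisive difference from the bounded case is the estimate of the bracket: instead of $\langle Y_{s_1,\cdot}\rangle_t\le\delta(u,t)^2\|\hat{b}_{j,n}\|_{\infty}^2$ one now has
\begin{align*}
\langle Y_{s_1,\cdot}\rangle_t=\int_u^t\hat{b}_{j,n}^2(s_1,t_1,W_{s_1,t_1})\,\mathrm{d}t_1\le\tilde{M}^2\,\delta(u,t)^2\Big(1+\sup_{0\le t_1\le1}|W_{s_1,t_1}|\Big)^2,
\end{align*}
so that, for fixed $s_1\le1$, one gets $\E[\langle Y_{s_1,\cdot}\rangle_t^{m/2}]\le\tilde{M}^m\delta(u,t)^m\kappa_m$ where $\kappa_m:=\E\big[(1+\sup_{0\le t_1\le1}|\beta_{t_1}|)^m\big]\le C^m m^{m/2}$ is a constant (independent of $s_1$, $n$ and the integration limits, $\beta$ a standard $d$-dimensional Brownian motion) governed by the sub-Gaussian tail of the supremum of Brownian motion. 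Feeding $\E[|Y_{s_1,t}|^m]\le c_1^m\sqrt{m}^{\,m}\tilde{M}^m\delta(u,t)^m\kappa_m$ into the exponential expansion used in Lemma~\ref{lemMalEstimate1}, and using $m^m/m!\le e^m$, yields
\begin{align*}
I_1\le\sum_{m=0}^{\infty}\frac{(\lambda\ze)^m\,\E[|Y_{s_1,t}|^m]}{\delta(u,t)^m\,m!}\le\sum_{m=0}^{\infty}\big(\lambda\,\ze\,c_1\,\tilde{M}\,C\,e\big)^m
\end{align*}
for a universal constant $\lambda$ (coming from the Jensen step). This geometric series converges to a finite constant precisely when $\ze$ is small enough, which is the reason the statement fixes a single small $\ze$ rather than allowing an arbitrary parameter $k$ as in Lemma~\ref{lemMalEstimate1}.

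For $I_3$ I would apply Jensen's inequality twice, in $s_1$ and in $t_1$, exactly as in Lemma~\ref{lemMalEstimate1}, reducing matters to bounding
\begin{align*}
\E\Big[\exp\Big(c\,\ze\,\tilde{M}\big(1+|W_{s_1,1-t_1}|\big)\Big|\tfrac{W^{(i)}_{s_1,1-t_1}}{\sqrt{s_1}\sqrt{1-t_1}}\Big|\Big)\Big]
\end{align*}
uniformly over $s_1,t_1$. Writing $G:=W_{s_1,1-t_1}/\sqrt{s_1(1-t_1)}$, a standard $\R^d$-Gaussian vector, and using $s_1(1-t_1)\le1$ together with $|W^{(i)}_{s_1,1-t_1}|\le|W_{s_1,1-t_1}|=\sqrt{s_1(1-t_1)}\,|G|$, the exponent is dominated by $c\,\ze\,\tilde{M}(|G|+|G|^2)$. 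Since $|G|^2$ is $\chi^2$-distributed, the corresponding exponential moment is finite exactly below the critical Gaussian threshold, i.e. when $c\,\ze\,\tilde{M}$ is smaller than a fixed constant, again forcing $\ze$ to be small. Choosing $\ze>0$ small enough to meet the finitely many constraints arising from $I_1$, $I_2$ and $I_3$ at once, and letting $\widetilde{C}_2$ be the resulting finite bound on $\tfrac13(I_1+I_2+I_3)$, establishes \eqref{eqDavie12}.

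The main obstacle, and the essential point of the linear growth regime, is the bookkeeping that isolates the correct smallness condition on $\ze$: the linear growth introduces an extra supremum-of-Brownian-motion factor in the martingale bracket, whose $m$-th moment contributes an additional $m^{m/2}$, turning the series for $I_1,I_2$ from one convergent for every coefficient (as in the bounded case) into one convergent only below a fixed threshold; simultaneously the product of the two correlated Gaussians in $I_3$ produces a genuine squared-Gaussian whose exponential is integrable only below the $\chi^2$ critical value. The remaining care is to verify that all intervening constants ($\kappa_m$, $c_1$, the $\chi^2$ bound) are independent of $n$ (which holds because $\tilde{M}$ does not depend on $n$) and of the integration limits $r,s,u,t$ (which holds after normalising the Gaussians to standard ones and dominating the subinterval suprema by the supremum over $[0,1]$), so that a single $\ze$ and a single $\widetilde{C}_2$ work uniformly.
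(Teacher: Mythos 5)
Your proposal is correct and takes essentially the same approach as the paper's own proof: the same local time--space decomposition into $I_1,I_2,I_3$ via convexity of $e^{3x}$, the same Jensen plus Barlow--Yor treatment of $I_1,I_2$ in which the linear-growth bound on the martingale bracket introduces the supremum-of-Brownian-path factor and turns the series into one of type $\sum_m (c\,\ze\,\tilde{M})^m m^m/m!$ convergent only for $\ze$ below a fixed threshold, and the same double-Jensen reduction of $I_3$ to an exponential moment of a squared Gaussian requiring $\ze$ small. Your bookkeeping of the constants (independence of $n$, $r,s,u,t$ via normalisation and $s_1(1-t_1)\leq 1$) matches the paper's, so there is nothing to correct.
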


\begin{lemma}\label{thmcompact22}
	There exist $C_2>0$ and $\tau\in(0,1)$ such that for every $(s,t)\in[0,\tau]$, the sequence $(X^{\xi,n}_{s,t} )_{n\geq1}$   satisfies
	\begin{align}\label{eqcompact32}
		\sup_{n\geq1} \Vert  X^n_{s,t}\Vert_{L^2(\Omega,\Pb;\R^d)}^2 \leq  C_2
	\end{align}
	and
	\begin{align}\label{eqcompact22}
		\sup_{n\geq1}	\sup_{
			\begin{subarray}{c}
				0 \leq r \leq s\\
				0 \leq u \leq t
		\end{subarray}} \E \left[ \Vert D_{r,u}X^n_{s,t}\Vert^2 \right] \leq  C_2.
	\end{align}
	Moreover, for all $0 \leq r',r \leq s \leq \tau,\,\,0 \leq u',u \leq t \leq \tau$,
	\begin{align}\label{eqcompact12}
		\E \left[ \Vert D_{r,u}X^n_{s,t} - D_{r',u'}X^n_{s,t} \Vert^2 \right] \leq C_2(|r -r'|^{} +|u -u'|^{}).
	\end{align}
\end{lemma}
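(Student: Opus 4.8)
The plan is to transcribe the proof of Lemma~\ref{thmcompact2}, replacing the bounded-drift exponential moment estimates of Lemma~\ref{lemMalEstimate1} by their linear-growth analogues in Lemma~\ref{lemMalDifflgrowth}, and to read off the threshold $\tau$ from the two places where the size of the parameter domain enters. As before we may take $\xi=0$ and set $X^n_{s,t}:=X^{0,n}_{s,t}$. Because each $\hat b_{j,n}$ and $\check b_{j,n}$ is componentwise nondecreasing, the derivatives $\partial_i\hat b_{j,n}$ and $\partial_i\check b_{j,n}$ are nonnegative, so the Malliavin derivative of $X^n$ still solves the linear equation \eqref{eqmalder1} and obeys the pointwise bound \eqref{eqmalder11} word for word; the whole argument rests on controlling exponential moments of $\int_r^s\int_u^t\{\partial_i\hat b_{j,n}+\partial_i\check b_{j,n}\}$.

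First I would prove \eqref{eqcompact22}. Squaring \eqref{eqmalder11}, taking expectations and applying the Girsanov theorem exactly as in Lemma~\ref{thmcompact2} introduces an exponential density of the Brownian sheet over $[0,\tau]^2$ and reduces matters, after Hölder's inequality, to moments of the form $\E\big[\exp\big(8d^2\int_r^s\int_u^t\partial_i\hat b_{j,n}(s_1,t_1,W_{s_1,t_1})\,\diffns t_1\diffns s_1\big)\big]$. Two new points appear. The first is that, under linear growth, the uniform $L^2$-boundedness of the Girsanov density is no longer automatic: one must check that $C_0:=\sup_{n}\E\big[\mathcal{E}\big(\int_0^\tau\int_0^\tau b_n\cdot\diffns W\big)^2\big]^{1/2}$ is finite, which follows from $|b_n|^2\le 2\tilde M^2(1+|W|^2)$ together with the fact that $\E\big[\exp\big(c\int_0^\tau\int_0^\tau|W_{s_1,t_1}|^2\,\diffns s_1\diffns t_1\big)\big]$ is finite only when $\tau$ is small enough (the usual Gaussian integrability threshold). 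The second is that Lemma~\ref{lemMalDifflgrowth} controls the exponential only for the specific coefficient $\ze/(\delta(r,s)\delta(u,t))$; since $\delta(r,s)\delta(u,t)=\sqrt{(s-r)(t-u)}\le\tau$ on $[0,\tau]^2$, choosing $\tau$ so small that $8d^2\tau\le\ze$ guarantees $8d^2\le\ze/(\delta(r,s)\delta(u,t))$, whence Jensen's inequality (the exponent $8d^2\delta(r,s)\delta(u,t)/\ze$ being at most $1$) bounds the moment by $\widetilde{C}_2^{\,8d^2\delta(r,s)\delta(u,t)/\ze}\le\max\{1,\widetilde{C}_2\}$. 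This yields \eqref{eqcompact22}, and \eqref{eqcompact32} then follows as in Lemma~\ref{thmcompact2} from Girsanov, the Cauchy--Schwarz inequality and $\E[|W_{s,t}|^4]<\infty$, again using the finiteness of $C_0$.

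For \eqref{eqcompact12} I would reuse the decomposition of $D_{r,u}X^n_{s,t}-D_{r',u'}X^n_{s,t}$ into the two boundary strips $[r',s]\times[u',u]$ and $[r',r]\times[u,t]$ carried out in Lemma~\ref{thmcompact2}, bound each piece through \eqref{eqmalder11} and \eqref{eqmallDerr1}, then square, take expectations and apply Girsanov and Hölder. The exponential factor (the analogue of $J_5$) carries the coefficient $32d^2$ and is handled exactly as above, which forces the slightly stronger requirement $32d^2\tau\le\ze$; the polynomial factors (the analogues of $J_1,\dots,J_4$) are treated by writing, with $y:=\tfrac{\ze}{\delta(r',s)\delta(u',u)}\int_{r'}^s\int_{u'}^u\partial_i\hat b_{j,n}\ge0$, the elementary bound $y^8\le 8!\,e^{y}$, so that $\E\big[\big(\int_{r'}^s\int_{u'}^u\partial_i\hat b_{j,n}\big)^8\big]\le 8!\,\big(\delta(r',s)\delta(u',u)/\ze\big)^8\,\widetilde{C}_2$ directly from Lemma~\ref{lemMalDifflgrowth}. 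Extracting the factors $\delta(r',r)^8\delta(u,t)^8$ and $\delta(r',s)^8\delta(u',u)^8$ and taking fourth roots produces the Hölder gain $|r-r'|+|u-u'|$, giving \eqref{eqcompact12}.

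The main obstacle is the first new point above: under mere linear growth the Girsanov density is only square-integrable for short times, so the entire scheme survives only on a small square $[0,\tau]^2$. Quantifying the Gaussian integrability of $\int_0^\tau\int_0^\tau|W|^2$ uniformly in $n$, and simultaneously respecting the coefficient constraint $32d^2\tau\le\ze$ coming from Lemma~\ref{lemMalDifflgrowth}, is what pins down the admissible threshold; one then takes $\tau$ to be the minimum of the two resulting bounds. Everything else is a faithful transcription of the bounded-drift argument, with $\widetilde{C}_1(\,\cdot\,,\cdot)$ replaced by $\widetilde{C}_2$ throughout.
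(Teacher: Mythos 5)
Your proposal is correct and takes essentially the same route as the paper: the paper likewise fixes $\tau<\min\{\tau_1,\ze/(32d^2)\}$ (with $\tau_1$ from Lemma \ref{lemUnifL2ExpBound} guaranteeing the uniform $L^2$ bound on the Girsanov density and $\ze$ from Lemma \ref{lemMalDifflgrowth}) and then repeats the computations of Lemma \ref{thmcompact2}, the observation $\delta(r',s)\delta(u',t)\le\tau\le\ze/(32d^2)$ playing exactly the role you assign it in absorbing the Hölder exponents $8d^2$ and $32d^2$. The only, immaterial, difference is that for \eqref{eqcompact32} the paper invokes the Gronwall-type inequality of \cite[Lemma 5.1.1]{Qi16} together with the linear growth of $b_n$, rather than Girsanov and Cauchy--Schwarz as you do; both arguments are valid on $[0,\tau]^2$.
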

Here is the main result of this section which is a consequence of Lemma \ref{thmcompact22} and the compactness criterion provided in Corollary \ref{CorolCompact}.
\begin{theorem}\label{themmalldiff12}
	There exist $\tau\in(0,1)$ such that for any  the strong solution $\{X^{\xi}_{s,t},\,s,t\in[0,\tau]\}$ of the SDE \eqref{Eqmainhpde5} is Malliavin differentiable. 
\end{theorem}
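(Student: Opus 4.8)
The plan is to reproduce, for small times, the three-step scheme used in the bounded-drift case (Theorem~\ref{themmalldiff1}), with every global-in-time estimate replaced by its small-time counterpart from Lemma~\ref{lemMalDifflgrowth} and Lemma~\ref{thmcompact22}: first establish relative compactness of the approximating sequence, then upgrade it to strong $L^2$-convergence toward $X^\xi$, and finally invoke the uniform $L^2$-bound on the Malliavin derivatives so that \cite[Lemma 1.2.3]{Nu06} delivers the differentiability of the limit.

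First I would fix $\tau\in(0,1)$ as furnished by Lemma~\ref{thmcompact22} and restrict to $(s,t)\in[0,\tau]^2$. The bounds \eqref{eqcompact32} and \eqref{eqcompact22} give \eqref{EqSeqXn1}, while the H\"older-type estimate \eqref{eqcompact12} yields \eqref{EqSeqXn2} for every $\beta\in(0,1/2)$, exactly as in the Remark following Lemma~\ref{thmcompact2}, using $|r-r'|^{1/2}|u-u'|^{1/2}\leq\frac12(|r-r'|+|u-u'|)$. Corollary~\ref{CorolCompact} then shows that $(X^{\xi,n}_{s,t})_{n\geq1}$ is relatively compact in $L^2(\Omega,\Pb;\R^d)$ for each such $(s,t)$.

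Next I would transfer the strong-convergence argument of Theorem~\ref{thml2con1} and its underlying weak-convergence Lemma~\ref{thml2con2}. The density of the Dol\'eans-Dade exponentials, the Cameron-Martin identity \eqref{eq:CMTranslaThm} and the Girsanov change of measure carry over unchanged, and the dominated convergence for the terms $I_{3,n}$, $I_{4,n}$ again uses the weight $\mathfrak{p}(x)=e^{-|x|^2/(2st)}$ together with the a.e.\ convergence $b_n\to b$. The one point requiring attention is the uniform-in-$n$ square integrability of $\mathcal{E}\big(\int_0^1\int_0^1 b_n\cdot\mathrm{d}W\big)$ (the analogue of the constant $C_0$): under linear growth this follows on $[0,\tau]^2$ from the uniform linear growth of $b_n$ together with the Fernique-type exponential integrability of the sheet, which controls $\E\big[\exp(c\int|b_n|^2)\big]$ once $\tau$ is small. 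Strong convergence $X^{\xi,n}_{s,t}\to X^\xi_{s,t}$ in $L^2$, combined with the uniform bound \eqref{eqcompact22}, then places us in the setting of \cite[Lemma 1.2.3]{Nu06} and gives Malliavin differentiability of $X^\xi_{s,t}$ on $[0,\tau]^2$.

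The hard part is pinning down $\tau$. In the bounded case Lemma~\ref{lemMalEstimate1} provides exponential moments for arbitrary exponents, so the Cauchy-Schwarz/H\"older chain behind Lemma~\ref{thmcompact2} tolerates the large powers $8d^2$ and $32d^2$ for all $s,t\in[0,1]$. Under linear growth, Lemma~\ref{lemMalDifflgrowth} only guarantees integrability for a single fixed $\ze>0$; hence $\tau$ must be chosen small enough that the area factor $\delta(r,s)\delta(u,t)=\sqrt{(s-r)(t-u)}$ keeps every exponent occurring in that decomposition below the admissible threshold $\ze$, while simultaneously preserving the uniform square integrability of the Girsanov density. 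Since all of these estimates are precisely those recorded in Lemma~\ref{lemMalDifflgrowth} and Lemma~\ref{thmcompact22}, the remaining work is bookkeeping of constants rather than new analysis.
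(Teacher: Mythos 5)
Your proposal is correct and follows essentially the same route as the paper: weak $L^2$-convergence as in Lemma~\ref{thml2con2}, upgraded to strong convergence via the relative compactness furnished by Lemma~\ref{thmcompact22} and Corollary~\ref{CorolCompact}, and then \cite[Lemma 1.2.3]{Nu06} applied with the uniform Malliavin-derivative bound \eqref{eqcompact22}. Your two points of attention --- the small-time square integrability of the Dol\'eans--Dade exponential and the choice of $\tau$ so that all exponents stay below the threshold $\ze$ of Lemma~\ref{lemMalDifflgrowth} --- are exactly what the paper handles in Lemma~\ref{lemUnifL2ExpBound} and in the proof of Lemma~\ref{thmcompact22} (where $\tau\in(0,\min\{\tau_1,\ze/32d^2\})$).
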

\begin{proof}
	As in the proof of Lemma \ref{thml2con2}, we show that $(X_{s,t}^{\xi,n})_{n\geq1}$ converges weackly in $L^2(\Omega,\Pb;\R^d)$ to $X^{\xi}_{s,t}$ for every $(s,t)\in[0,\tau]^2$. Hence, using Lemma \ref{thmcompact22} and the compactness criterion provided in Corollary \ref{CorolCompact}, we deduce that $(X_{s,t}^{\xi,n})_{n\geq1}$ converges strongly in $L^2(\Omega,\Pb;\R^d)$ to $X^{\xi}_{s,t}$. Then, since $(D_{r,u}X^{\xi,n}_{s,t})_{n\geq 1}$ is bounded in the $L^2([0,1]^2\times \Omega,\mathrm{d}s\times\mathrm{dt}\times\Pb;\R^{d\times d})$-norm uniformly in $n$ (see \eqref{eqcompact22}), it follows from \cite[Lemma 1.2.3]{Nu06} that the limit $X^{\xi}_{s,t}$ is also 
	Malliavin differentiable.
\end{proof}

\begin{appendix}	
	\section*{Appendix}\label{appn} 
	In this section we provide the proofs of Lemmas \ref{lemMalDifflgrowth} and \ref{thmcompact22}. Let us start with a useful estimate.
	\begin{lemma}\label{lemUnifL2ExpBound}
		There exist $\tau_1\in(0,1)$ such that 
		\begin{align}\label{UnifL2ExpBound}
			\sup\limits_{n\geq1}\E\Big[\mathcal{E}\Big(\int_0^{\tau_1}\int_{0}^{\tau_1}b_n(s,t,W_{s,t})\cdot\mathrm{d}W_{s,t}\Big)^2\Big]<\infty.
		\end{align}
	\end{lemma}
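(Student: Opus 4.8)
The plan is to remove the drift from the exponential martingale by a Cauchy--Schwarz split and thereby reduce the whole estimate to a Gaussian exponential moment of the squared Brownian sheet, which is finite precisely because $\tau_1$ is small. Write $M_n=\int_0^{\tau_1}\int_0^{\tau_1}b_n(s,t,W_{s,t})\cdot\mathrm{d}W_{s,t}$ and set $\langle M_n\rangle:=\int_0^{\tau_1}\int_0^{\tau_1}|b_n(s,t,W_{s,t})|^2\,\mathrm{d}s\,\mathrm{d}t$, so that by definition $\mathcal{E}(M_n)=\exp(M_n-\tfrac12\langle M_n\rangle)$ and hence $\mathcal{E}(M_n)^2=\exp(2M_n-\langle M_n\rangle)$. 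First I would use the factorisation $\mathcal{E}(M_n)^2=\exp(2M_n-4\langle M_n\rangle)\cdot\exp(3\langle M_n\rangle)$ together with the Cauchy--Schwarz inequality, noting that $\exp(4M_n-8\langle M_n\rangle)=\mathcal{E}(4M_n)$, to obtain
\[
\E\big[\mathcal{E}(M_n)^2\big]\le\E\big[\mathcal{E}(4M_n)\big]^{1/2}\,\E\big[\exp(6\langle M_n\rangle)\big]^{1/2}.
\]
The exponent $4$ is forced here: it is exactly the value that cancels the $M_n$-term in the second factor, leaving a pure functional of $\langle M_n\rangle$. Since for each fixed $n$ the drift $b_n$ is bounded, so is $4b_n$, and the Girsanov theorem for Brownian sheets (\cite[Theorem 3.5]{DM15}, \cite[Proposition 1.6]{NP94}) guarantees that $\mathcal{E}(4M_n)$ is a genuine probability density; in particular $\E[\mathcal{E}(4M_n)]=1$. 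Thus $\E[\mathcal{E}(M_n)^2]\le\E[\exp(6\langle M_n\rangle)]^{1/2}$, and the drift has now disappeared from the martingale factor.

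Next I would invoke the uniform linear growth bound $|b_n(s,t,x)|\le\tilde{M}(1+|x|)$, which gives $\langle M_n\rangle\le 2\tilde{M}^2\tau_1^2+2\tilde{M}^2\int_0^{\tau_1}\int_0^{\tau_1}|W_{s,t}|^2\,\mathrm{d}s\,\mathrm{d}t$ and hence
\[
\E\big[\exp(6\langle M_n\rangle)\big]\le e^{12\tilde{M}^2\tau_1^2}\,\E\Big[\exp\Big(12\tilde{M}^2\int_0^{\tau_1}\int_0^{\tau_1}|W_{s,t}|^2\,\mathrm{d}s\,\mathrm{d}t\Big)\Big].
\]
The right-hand side no longer depends on $n$, so it suffices to control the Gaussian exponential moment for the fixed constant $c:=12\tilde{M}^2$. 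For this I would apply Jensen's inequality with respect to the normalised Lebesgue measure $\tau_1^{-2}\mathrm{d}s\,\mathrm{d}t$ on $[0,\tau_1]^2$, then Fubini's theorem, to get
\[
\E\Big[\exp\Big(c\int_0^{\tau_1}\int_0^{\tau_1}|W_{s,t}|^2\,\mathrm{d}s\,\mathrm{d}t\Big)\Big]\le\frac{1}{\tau_1^2}\int_0^{\tau_1}\int_0^{\tau_1}\E\big[\exp(c\tau_1^2|W_{s,t}|^2)\big]\,\mathrm{d}s\,\mathrm{d}t.
\]
Since the $d$ components of $W_{s,t}$ are independent centred Gaussians of variance $st$, the chi-square moment generating function yields $\E[\exp(c\tau_1^2|W_{s,t}|^2)]=(1-2c\tau_1^2 st)^{-d/2}$ whenever $2c\tau_1^2 st<1$.

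Finally, using $st\le\tau_1^2$ on $[0,\tau_1]^2$, I would pick $\tau_1\in(0,1)$ so small that $2c\tau_1^4<1$ (for instance $\tau_1<\min\{1,(2c)^{-1/4}\}$); then the integrand is dominated by $(1-2c\tau_1^4)^{-d/2}$ uniformly over the square, giving a finite bound that is independent of $n$ and completing the proof. The main obstacle is the second step: one must engineer the Cauchy--Schwarz decomposition so that the martingale factor is exactly the Dol\'eans exponential $\mathcal{E}(4M_n)$ — of expectation one for each fixed $n$ by Girsanov, despite the bound on $b_n$ being non-uniform — so that only the exponential of the energy $\langle M_n\rangle$ survives; the $n$-uniform control of that term then rests entirely on the smallness of $\tau_1$ through the Gaussian moment generating function.
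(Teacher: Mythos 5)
Your proposal is correct and follows essentially the same route as the paper's proof: the identical Cauchy--Schwarz split $\exp(2M_n-4\langle M_n\rangle)\cdot\exp(3\langle M_n\rangle)$ with $\E[\mathcal{E}(4M_n)]=1$ by Girsanov (valid for each fixed $n$ since $b_n$ is bounded), followed by the linear-growth bound, Jensen's inequality with the normalised Lebesgue measure on $[0,\tau_1]^2$, and finiteness of the Gaussian exponential moment for $\tau_1$ small. The only (harmless) differences are that you pull the constant from the linear-growth bound out before applying Jensen, and you make the chi-square moment generating function $(1-2c\tau_1^2st)^{-d/2}$ explicit where the paper simply asserts finiteness of $\E[\exp(c\,\tau_1^4|G|^2)]$ for small $\tau_1$.
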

	\begin{proof}
		By Cauchy-Schwarz inequality, we have
		\begin{align*}
			&\E\Big[\mathcal{E}\Big(\int_0^{\tau_1}\int_{0}^{\tau_1}b_n(s,t,W_{s,t})\cdot\mathrm{d}W_{s,t}\Big)^2\Big]\\=&\E\Big[\exp\Big(2\int_0^{\tau_1}\int_{0}^{\tau_1}b_n(s,t,W_{s,t})\cdot\mathrm{d}W_{s,t}- \int_{0}^{\tau_1}\int_0^{\tau_1}|b_n(s,t,W_{s,t})|^2\mathrm{d}s\mathrm{d}t\Big)\Big]\\=&\E\Big[\exp\Big(2\int_0^{\tau_1}\int_{0}^{\tau_1}b_n(s,t,W_{s,t})\cdot\mathrm{d}W_{s,t}- 4\int_{0}^{\tau_1}\int_0^{\tau_1}|b_n(s,t,W_{s,t})|^2\mathrm{d}s\mathrm{d}t+3\int_{0}^{\tau_1}\int_0^{\tau_1}|b_n(s,t,W_{s,t})|^2\mathrm{d}s\mathrm{d}t\Big)\Big]\\ \leq&\E\Big[\mathcal{E}\Big(4\int_0^{\tau_1}\int_{0}^{\tau_1}b_n(s,t,W_{s,t})\cdot\mathrm{d}W_{s,t} \Big)\Big]^{\frac{1}{2}}\E\Big[\exp\Big(6\int_{0}^{\tau_1}\int_0^{\tau_1}|b_n(s,t,W_{s,t})|^2\mathrm{d}s\mathrm{d}t\Big)\Big]^{\frac{1}{2}}.
		\end{align*}
		Since $b_n$ is bounded, we have (see e.g. \cite[Proposition 1.6]{NP94})
		\begin{align*}
			\E\Big[\mathcal{E}\Big(4\int_0^{\tau_1}\int_{0}^{\tau_1}b_n(s,t,W_{s,t})\cdot\mathrm{d}W_{s,t} \Big)\Big]=1,\quad\forall\,n\geq1,\,\tau_1>0.
		\end{align*}
		Moreover, by Jensen inequality, 
		\begin{align*}
			\E\Big[\exp\Big(6\int_{0}^{\tau_1}\int_0^{\tau_1}|b_n(s,t,W_{s,t})|^2\mathrm{d}s\mathrm{d}t\Big)\Big]\leq&\frac{1}{\tau_1^2}\int_{0}^{\tau_1}\int_0^{\tau_1}\E\Big[\exp\left(6\tau_1^2|b_n(s,t,W_{s,t})|^2\right)\Big]\mathrm{d}s\mathrm{d}t\\ \leq&\frac{1}{\tau_1^2}\int_{0}^{\tau_1}\int_0^{\tau_1}\E\Big[\exp\left(24\tau_1^2\tilde{M}d(1+|W_{s,t}|^2)\right)\Big]\mathrm{d}s\mathrm{d}t\\=&\frac{\exp\left(24\tau_1^2\tilde{M}d\right)}{\tau_1^2}\int_{0}^{\tau_1}\int_0^{\tau_1}\E\left[\exp(24\tau_1^2\tilde{M}d\,st|G|^2)\right]\mathrm{d}s\mathrm{d}t\\ \leq& \exp\left(24\tau_1^2\tilde{M}d\right) \E\left[\exp(24\tau_1^4\tilde{M}d|G|^2)\right],
		\end{align*}
		where $G=(G_1,\ldots,G_d)$ is a reduced Gaussian random vector. 
		
		The proof is completed since $\E\left[\exp(12\tau_1^4\tilde{M}d|G|^2)\right]<\infty$ for $\tau_1$ small enough.
	\end{proof}
	
	\begin{remark}\label{remUnifL2ExpBound}
		When the drift $b$ is bounded, the functions $b_n$, $n\geq1$ are uniformly bounded and, as a consequence,
		\begin{align*} 
			C_0=\sup\limits_{n\geq1}\E\Big[\mathcal{E}\Big(\int_0^{1}\int_{0}^{1}b_n(s,t,W_{s,t})\cdot\mathrm{d}W_{s,t}\Big)^2\Big]^\frac{1}{2}<\infty.
		\end{align*} 
	\end{remark}
	
	\begin{proof}[Proof of Lemma \ref{lemMalDifflgrowth}.]
		We only prove \eqref{eqDavie12} since the proof of \eqref{eqDavie22} follows the same lines. We deduce from the local time-space integration formula \eqref{eq:EisenSheetdD01} that  
		\begin{align}\label{IneqDWest2} 
			&\mathbb{E}\Big[\exp\Big(\frac{\ze}{\delta(r,s)\delta(u,t)}\Big|\int_r^s\int_u^t\partial_i\hat{b}_{j,n}(s_1,t_1,W_{s_1,t_1})\mathrm{d}t_1\mathrm{d}s_1\Big|\Big)\Big]\nonumber\\
			\leq&\frac{1}{3}\Big\{\mathbb{E}\Big[\exp\Big(\frac{3\ze}{\delta(r,s)\delta(u,t)}\Big|\int_r^{s}\int_u^{t}\hat{b}_{j,n}(s_1,t_1,W_{s_1,t_1})\frac{\mathrm{d}_{t_1}W^{(i)}_{s_1,t_1}}{s_1}\mathrm{d}s_1\Big|\Big)\Big]\\&\quad+ \mathbb{E}\Big[\exp\Big(\frac{3\ze}{\delta(r,s)\delta(u,t)}\Big|\int_r^{s}\int_{1-t}^{1-u}\hat{b}_{j,n}(s_1,t_1,W_{s_1,1-t_1})\frac{\mathrm{d}_{t_1}B^{(i)}_{s_1,t_1}}{s_1}\mathrm{d}s_1 \Big|\Big)\Big]\nonumber\\
			&\quad+\mathbb{E}\Big[\exp\Big(\frac{3\ze}{\delta(r,s)\delta(u,t)}\Big|\int_r^{s}\int_{1-t}^{1-u}\hat{b}_{j,n}(s_1,t_1,W_{s_1,1-t_1})\frac{W^{(i)}_{s_1,1-t_1}}{s_1(1-t_1)}\mathrm{d}t_1\mathrm{d}s_1\Big|\Big)\Big]\Big\}=\frac{1}{3}(I_1+I_2+I_3).\nonumber
		\end{align}
		By Jensen inequality,
		\begin{align*}
			I_1=&\mathbb{E}\Big[\exp\Big(\frac{3\ze}{\delta(r,s)\delta(u,t)}\Big|\int_r^{s}\int_u^{t}\hat{b}_{j,n}(s_1,t_1,W_{s_1,t_1})\frac{\mathrm{d}_{t_1}W^{(i)}_{s_1,t_1}}{s_1}\mathrm{d}s_1\Big|\Big)\Big]\\\leq&\int_r^s\mathbb{E}\Big[\exp\Big(\frac{6\ze(\sqrt{s}-\sqrt{r})}{\delta(r,s)\delta(u,t)}\Big|\int_u^{t}\hat{b}_{j,n}(s_1,t_1,W_{s_1,t_1})\frac{\mathrm{d}_{t_1}W^{(i)}_{s_1,t_1}}{\sqrt{s_1}}\Big|\Big)\Big] \frac{\mathrm{d}s_1}{2\sqrt{s_1}(\sqrt{s}-\sqrt{r})}\\
			\leq&\int_r^s\mathbb{E}\Big[\exp\Big(\frac{6\ze}{\delta(u,t)}\Big|\int_u^{t}\hat{b}_{j,n}(s_1,t_1,W_{s_1,t_1})\frac{\mathrm{d}_{t_1}W^{(i)}_{s_1,t_1}}{\sqrt{s_1}}\Big|\Big)\Big] \frac{\mathrm{d}s_1}{2\sqrt{s_1}(\sqrt{s}-\sqrt{r})}
		\end{align*}	
		Since, for every $s_1\in[r,s]$, $$\Big(Y_{s_1,t_1}:=\int_{u}^{t_1}\hat{b}_{j,n}(s_1,t_2,W_{s_1,t_2})\frac{\mathrm{d}_{t_2}W^{(i)}_{s_1,t_2}}{\sqrt{s_1}},u\leq t_1\leq t\Big)$$ is a square integrable martingale and similar reasoning as before gives
		%	it follows from the Barlow-Yor inequality and Jensen inequality that there exists a positive constant $c_1$ such that, for any positive integer $m$ and any $s_1\in[r,s]$,
		\begin{align*}
			\E[|Y_{s_1,t}|^m] &\leq 2^mc_1^{2m}\tilde{M}^mm^{m}\delta(u,t)^{m}.
		\end{align*}
		From this and the exponential expansion formula, we get 
		%\begin{align*}
		%	\exp\Big(\frac{6\ze}{\delta(u,t)}|Y_{s_1,t}|\Big)=\sum\limits_{m=0}^{\infty}\frac{6^m\ze^m|Y_{s_1,t}|^m}{\delta(u,t)^m\,m!},
		%	\end{align*}
		%	we obtain
		\begin{align*}
			I_1\leq&%\int_r^s\mathbb{E}\Big[\exp\Big(\frac{6\ze}{\delta(u,t)}\Big|\int_u^{t}\hat{b}_{j,n}(s_1,t_1,W_{s_1,t_1})\frac{\mathrm{d}_{t_1}W_{s_1,t_1}}{\sqrt{s_1}}\Big|\Big)\Big] \frac{\mathrm{d}s_1}{2\sqrt{s_1}(\sqrt{s}-\sqrt{r})}\\=&\int_r^s\E\Big[\exp\Big(\frac{6\ze}{\delta(u,t)}|Y_{s_1,t}|\Big)\Big]\frac{\mathrm{d}s_1}{2\sqrt{s_1}(\sqrt{s}-\sqrt{r})}=\int_r^s\sum\limits_{m=0}^{\infty}\frac{6^m\ze^m\E[|Y_{s_1,t}|^m]}{\delta(u,t)^mm!}\,\frac{\mathrm{d}s_1}{2\sqrt{s_1}(\sqrt{s}-\sqrt{r})}\\ \leq&
			\sum\limits_{m=0}^{\infty}\frac{2^{4m}\ze^mc_1^{2m}m^{m}\tilde{M}^m}{m!}:=\widetilde{C}_{1,1},
		\end{align*}
		which is finite if $\ze<e/2^4c_1^2\tilde{M}$ (by ratio test).   Similarly, we also have
		\begin{align*}
			I_2\leq\widetilde{C}_{1,1}.
		\end{align*}
		To estimate $I_3$ we apply Jensen inequality again and we obtain
		\begin{align*}
			I_3=& \mathbb{E}\Big[\exp\Big(\frac{3\ze}{\delta(r,s)\delta(u,t)}\Big|\int_r^{s}\int_{1-t}^{1-u}\hat{b}_{j,n}(s_1,t_1,W_{s_1,1-t_1})\frac{W^{(i)}_{s_1,1-t_1}}{s_1(1-t_1)}\mathrm{d}t_1\mathrm{d}s_1\Big|\Big)\Big]\\
			\leq&\int_r^{s}\int_{1-t}^{1-u}\mathbb{E}\Big[\exp\Big(\frac{12\ze(\sqrt{s}-\sqrt{r})(\sqrt{1-u}-\sqrt{1-t})}{\delta(r,s)\delta(u,t)}|\hat{b}_{j,n}(s_1,t_1,W_{s_1,1-t_1})|\Big|\frac{W^{(i)}_{s_1,1-t_1}}{\sqrt{s_1}\sqrt{1-t_1}}\Big|\Big)\Big]\\&\qquad\qquad\times\frac{\mathrm{d}t_1}{2\sqrt{1-t_1}(\sqrt{1-u}-\sqrt{1-t})}\frac{\mathrm{d}s_1}{2\sqrt{s_1}(\sqrt{s}-\sqrt{r})}\\
			\leq&\int_r^{s}\int_{1-t}^{1-u}\mathbb{E}\Big[\exp\Big\{24\tilde{M}\ze\Big(1+ \frac{|W^{(i)}_{s_1,1-t_1}|^2}{s_1(1-t_1)}\Big)\Big\}\Big]\frac{\mathrm{d}t_1}{2\sqrt{1-t_1}(\sqrt{1-u}-\sqrt{1-t})}\frac{\mathrm{d}s_1}{2\sqrt{s_1}(\sqrt{s}-\sqrt{r})}
			:=\widetilde{C}_{1,2},
		\end{align*}
		%Since $\Big|\frac{W^{(i)}_{s_1,1-t_1}}{\sqrt{s_1}\sqrt{1-t_1}}\Big|$ is a reduced Gaussian random variable, 
		with $\widetilde{C}_{1,2}$ finite provided that $24\tilde{M}\ze<1/2$.\\ The proof is completed by choosing  $\ze=1/2^6c_1^2\tilde{M}$ and $\widetilde{C}_1=(2\widetilde{C}_{1,1}+\widetilde{C}_{1,2})/3$.
	\end{proof}
	\begin{proof}[Proof of Lemma \ref{thmcompact22}.]
		Fix $\tau\in(0,\min\{\tau_1,\ze/32d^2\})$, where $\tau_1$ is the constant in Lemma \ref{lemUnifL2ExpBound} and $\ze$ is the constant in Lemma \ref{lemMalDifflgrowth}. 
		We deduce from \cite[Lemma 5.1.1]{Qi16} and the linear growth condition on the drift $b_n$ that $\E[\|X^n_{s,t}\|^2]\leq C_{2,1}$ for all $(s,t)\in[0,\tau]^2$ and $n\geq1$, where $C_{2,1}$ does not depend of $(s,t)$ and $n$. Let $0 \leq r'\leq r \leq s \leq \tau$ and $0 \leq u'\leq u \leq t \leq \tau$. Since $$\tau\leq\tau_1\,\text{ and }\,\delta(r',s)\delta(u',t)\leq\tau\leq \frac{\ze}{32d^2},$$
		then, using similar computations as in the proof of Lemma \ref{thmcompact2}, one can deduce from Lemma \ref{lemUnifL2ExpBound}, Girsanov theorem and H\"older inequality that
		\begin{align*}
			\sup\limits_{n\geq1}\,\sup\limits_{
				\begin{subarray}{c}
					0\leq r\leq s\\
					0\leq u\leq t
			\end{subarray}}\E\Big[\|D_{r,u}X^n_{s,t}\|^2\Big]:=C_{2,2}<\infty
		\end{align*}
		and
		\begin{align*}
			\E\Big[\|D_{r,u}X^n_{s,t}-D_{r',u'}X^n_{s,t}\|^2\Big]\leq C_{2,3}(|r-r'|+|u-u'|)
		\end{align*}
		for some positive constant $\widetilde{C}_2$ independent of $n$. The proof is completed by taking $C_2=\max\{C_{2,1},C_{2,2},C_{2,3}\}$.
	\end{proof}
	
	\begin{remark}
		It is worth noting that if the drift is in addition the difference of two convex or concave functions, then the  solution to the equation \eqref{Eqmainhpde5} is twice Malliavin differentiable. Note that is this case,drfit $b=\hat{b}-\check{b}$ is Lipschitz with the second order weak derivatives of $\hat{b},\check{b}$  positive or negative. 
	\end{remark} 
	%% if no title is needed, leave empty \section*{}.
	%Appendices should be provided in \verb|{appendix}| environment,
	%before Acknowledgements.
	%
	%If there is only one appendix,
	%then please refer to it in text as \ldots\ in the \hyperref[appn]{Appendix}.
\end{appendix}

\end{document}